\def\section{%
  \@startsection{section}{1}
    {\z@}
    {2.0ex plus 0.8ex minus .1ex}
    {1.0ex plus .2ex}
    {\large\bfseries\boldmath\centering\MakeTextUppercase}%
}
\def\ve{{\rm \varepsilon}}
\newcommand{\dd}{\sqrt{-1}\partial\bar{\partial}}
\newcommand{\krf}{K\"ahler-Ricci flow\ }
\newcommand{\dl}{\left(\frac{\partial}{\partial t}-\Delta_{\omega_{\gamma\varepsilon j}(t)}\right)}
\newcommand{\osc}{\mathop{\mathrm{osc}}}
\newcommand{\tr}{\mathrm{tr}}
\newtheorem{thm}{Theorem}[section]
\newtheorem{lem}[thm]{Lemma}
\newtheorem{cor}[thm]{Corollary}
\newtheorem{rem}[thm]{Remark}
\newtheorem{pro}[thm]{Proposition}
\newtheorem{defi}[thm]{Definition}
\numberwithin{equation}{section}
\begin{document}

\title[\sc\small L\MakeLowercase{imit behavior of the twisted conical } K\MakeLowercase{\"ahler-}R\MakeLowercase{icci flow}]{\sc\LARGE L\MakeLowercase{imit behavior of the twisted conical} K\MakeLowercase{\"ahler-}R\MakeLowercase{icci flow with change in the cone angle}}
\keywords{K\"ahler-Ricci flow, conical singularity, cusp singularity.}
\author[\sc\small J\MakeLowercase{iawei} L\MakeLowercase{iu and} X\MakeLowercase{i} Z\MakeLowercase{hang}]{\sc\large J\MakeLowercase{iawei} L\MakeLowercase{iu and} X\MakeLowercase{i} Z\MakeLowercase{hang}}
\address{Jiawei Liu\\School of Mathematics and Statistics\\ Nanjing University of Science \& Technology\\ Xiaolingwei Street 200\\ Nanjing 210094\\ China.} \email{jiawei.liu@njust.edu.cn}
\address{Xi Zhang\\School of Mathematics and Statistics\\ Nanjing University of Science \& Technology\\ Xiaolingwei Street 200\\ Nanjing 210094\\ China.} \email{mathzx@njust.edu.cn}\subjclass[2020]{53E30,\ 35K96,\ 35K67.}
\thanks{X. Zhang is supported by National Key R and D Program of China 2020YFA0713100, NSFC (Grant Nos.12141104 and 11721101). J.W. Liu is supported by NSFC (Grant No.12371059), Jiangsu Specially-Appointed Professor Program, Fundamental Research Funds for the Central Universities, and partially supported by Special Priority Program SPP 2026 ``Geometry at Infinity" from the German Research Foundation (DFG)}
\maketitle
\vskip -3.99ex

\centerline{\noindent\mbox{\rule{3.99cm}{0.5pt}}}

\vskip 5.01ex

\ \ \ \ {\bf Abstract.} In this paper, we study the limit behavior of the conical K\"ahler-Ricci flow as its cone angle tends to zero. More precisely, we prove that as the cone angle tends to zero, the conical K\"ahler-Ricci flow converges to a unique K\"ahler-Ricci flow, which is smooth outside the divisor and admits cusp singularity along the divisor. 
\tableofcontents

%\vspace{7mm}

\section{Introduction}

Ricci flow was first introduced by Hamilton \cite{Hamilton} in 1980s, which has many applications in differential geometry and topology. For example, Perelman \cite{Perelman1,Perelman2} gave a solution to the Poincar\'e conjecture, and Brendle-Schoen \cite{SchoenBrendle} proved the differentiable sphere theorem via Ricci flow. Then Cao \cite{Cao} introduced Ricci flow to the K\"ahler manifold, which was called K\"ahler-Ricci flow, to give a parabolic proof of the Calabi-Yau theorem. The general existence of K\"ahler-Ricci flow was given by Tsuji \cite{Tsuji} and Tian-Zhang \cite{Tianzzhang}. Later, Lott-Zhang \cite{LZ,LZ1} showed that the K\"ahler-Ricci flows can come from some singular metrics on quasi-projective manifold and that the flows keep the singularities of the initial metrics. Song-Tian \cite{JSGT} proved that the K\"ahler-Ricci flow can come from a weak metric whose potential is only continuous and then immediately smoothes it. Sz\'ekelyhidi-Tosatti \cite{GSVT} generalized this result to a more general kind of complex Monge-Amp\`ere flow, and as an application they gave the regularity result on the corresponding complex Monge-Amp\`ere equation. Moreover, Phong-Sturm \cite{PS}, Phong-Sesum-Sturm \cite{PSST}, Tian-Zhu \cite{GTXHZ07,GTXHZ13}, Phong-Song-Sturm-Weinkove \cite{PSSW0,PSSW}, Sz\'ekelyhidi \cite{Sze1}, Tosatti \cite{Tosatti}, Tian-Zhang-Zhang-Zhu \cite{TZZZ}, Song-Tian \cite{JSGT}, Berman-Boucksom-Eyssidieux-Guedj-Zeriahi \cite{BBEGZ}, Dervan-Sz\'ekelyhidi \cite{DC} and Jian-Song-Tian \cite{JST1}etc. contributed many important results on the convergence of K\"ahler-Ricci flow. In recent years, the most significant work is the solution of Hamilton-Tian's conjecture \cite{GTian}, which has been proved by Tian-Zhang \cite{Tianzhang}, Bamler \cite{Bamler}, Chen-Wang \cite{Chenwangb} and Wang-Zhu \cite{WaZhu}.
 
 As a generalization of the K\"ahler-Ricci flow, the twisted K\"ahler-Ricci flow which adds a smooth twisted form in K\"ahler-Ricci flow was first studied by Collins-Sz\'ekelyhidi \cite{TC} and the first author \cite{JWL}. By using some new arguments, Guedj-Zeriahi \cite{VGAZ2} and Di Nezza-Lu \cite{NL2017} gave the best regularity and uniqueness results of this flow starting from weak metric which only admits zero Lelong number. Recently, Zhang \cite{KWZ} proved Tian's partial $C^0$-estimate along this flow, and the authors \cite{JWLXZ3} showed the relation between the convergence of  these flows and the greatest Ricci lower bound.

When adding a non-smooth twisted form in K\"ahler-Ricci flow, more precisely, the twisted form is only a current of the integration along a divisor with positive coefficient less than $1$, such flow was called conical K\"ahler-Ricci flow which was first introduced by Jeffres-Mazzeo-Rubinstein \cite{JMR}. Then Chen-Wang \cite{CW,CW1}, the authors \cite{JWLXZ}, Wang \cite{YQW} and Shen \cite{LMSH1} studied its existence when is comes from a class of conical K\"ahler metrics or a specific one. Later, the authors \cite{JWLXZ1} and the first author and Zhang \cite{JWLCJZ} extended the initial metic to the weak one as in \cite{JSGT, GSVT}, and Li-Shen-Zheng \cite{LSZ} extended this result to the weak initial metric which only admits $L^\infty$-potential on log canonical pairs. Recently, the authors and Zhang \cite{JWLXZ4} studied this flow coming from the weakest initial metric whose Lelong number is zero, which generalized the results on smooth K\"ahler-Ricci flow proved by Guedj-Zeriahi \cite{VGAZ2} and Di Nezza-Lu \cite{NL2017} to the conical case. For the convergence and geometric properties along this flow, see \cite{GEDWA1, GEDWA, Nomura, LMSH2, YSZ, YSZ1, JWLXZ2} for more details.

The conical K\"ahler-Ricci flow was introduced to study the conical K\"ahler-Einstein metric which plays an important role in the solution of Yau-Tian-Donaldson conjecture. Such metrics exist for all small cone angles, which was conjectured by Donaldson \cite{SD2009} and proved by Berman \cite{RB} and Li-Sun \cite{LS} (see also \cite{JWL1, JWLXZ2}). So how large the cone angle can be, and what the limit behavior of this metric should be as its cone angle tends to the critical value are important and interesting problems. On the first question, if it can be related to some geometric invariant, Song-Wang \cite{SW} did important work is this subject. And from the proof of Yau-Tian-Donaldson conjecture \cite{T1, CDS1, CDS2, CDS3}, if the Fano manifold is $K$-stable, then the conical K\"ahler-Einstein metric converges to the smooth K\"ahler-Einstein metric as its cone angle tends to $2\pi$. On the other hand, Tian \cite{Tian94} anticipated that the complete Tian-Yau K\"ahler-Einstein metric on the complement of a divisor should be the limit of the conical K\"ahler-Einstein metric as cone angle tend to zero, and Cheltsov-Rubinstein \cite{ChRu15} gave a more general conjecture. Guenancia \cite{G11} and Rubinstein-Zhang \cite{RuZha1, RuZha2} contributed many important work in this research. Especially, by introducing new ideas and techniques, Biquard-Guenancia \cite{BiGu} creatively solved Tian's conjecture in Fano and some twisted canonical line bundles being ample cases. 

For the conical K\"ahler-Ricci flow itself, it exits for all cone angles belonging to $0$ to $2\pi$, and it converges to the smooth K\"ahler-Ricci flow as its cone angle tends to $2\pi$ because there is a positive lower bound on the cone angel in this case and so the uniform estimates can be obtained directly (see \cite{JWLXZ, YQW, LMSH1}). A natural question is what the limit behavior of the conical K\"ahler-Ricci flow should be as its cone angle tends to zero. As we know, only the authors \cite{JWLXZ19} proved that the conical K\"ahler-Ricci flow with a specific initial metric admitting conical singularity converges to a cusp K\"ahler-Ricci flow with a cusp K\"ahler metric as its initial metric as the cone angle tends to zero if the twisted canonical line bundle $K_X+L_D$ is ample. The key point is that the authors observe that the singularity term can be combined with the metric potential to be viewed as a whole, which can be controlled uniformly when they choose a specific initial metric. But unfortunately, this method does not work in general case. 

In this paper, we study the limit behavior of the twisted conical K\"ahler-Ricci flow as its cone angle tends to zero in general case. Let $(X, \omega)$ be a compact K\"ahler manifold with complex dimension $n$ and $D$ be a smooth divisor. Assume that $\hat\omega\in [\omega]$ is a positive closed current with potential $\varphi_0\in L^\infty(X)\cap {\rm PSH(X,\omega)}$ and that $\eta$ is a smooth closed $(1,1)$-form. The twisted conical K\"ahler-Ricci flow with initial metric $\hat\omega$ is given as
\begin{equation}\label{CK}
\left\{
\begin{aligned}
 &\ \frac{\partial \omega_{\gamma}(t)}{\partial t}=-{\rm Ric}(\omega_{\gamma}(t))+(1-\gamma)[D]+\eta\\
 &\ \ \ \ \ \ \ \ \ \ \ \ \ \ \ \ \ \ \ \ \ \ \ \ \ \ \ \ \ \ \ \ \ \ \ \ \ \ \ \ \ \ \ \ \ \ \ \ \ \ \ \ \ \ \ ,\\
 &\ \omega_{\gamma}(t)|_{t=0}=\hat\omega
\end{aligned}
\right.\tag{$CKRF^{\eta}_{\gamma}$}
\end{equation}
where $[D]$ is the current of integration along $D$. Denote 
\begin{equation}\label{0131001}
T^{\eta}_{\gamma,\max}=\sup\left\{t\geqslant0\Big| [\omega]+t\left(-c_1(X)+(1-\gamma)c_1(L_D)+[\eta]\right)\geqslant0\right\},
\end{equation}
where $L_D$ is the line bundle associated with divisor $D$. From Li-Shen-Zheng's \cite{LSZ} result (see the authors' \cite{JWLXZ4} result for more general case), we see that there exists a unique solution to the twisted K\"ahler-Ricci flow \eqref{CK} on $[0,T^\eta_{\gamma,\max})\times X$ in the sense of Definition \ref{0416002}.

By introducing some new ideas, here we give the following characterization of the limit behavior of the twisted conical K\"ahler-Ricci flow as its cone angle tends to zero.
\begin{thm}\label{0416001}
As the cone angle tends to $0$, the twisted conical K\"ahler-Ricci flow \eqref{CK} converges locally in the smooth sense outside the divisor and globally in the sense of currents to a K\"ahler-Ricci flow
\begin{equation}\label{PK}
\left\{
\begin{aligned}
 &\ \frac{\partial \omega(t)}{\partial t}=-{\rm Ric}(\omega(t))+[D]+\eta\\
 &\ \ \ \ \ \ \ \ \ \ \ \ \ \ \ \ \ \ \ \ \ \ \ \ \ \ \ \ \ \ \ \ \ \ \ \ \ \ \ \ , \\
 &\ \omega(t)|_{t=0}=\hat\omega
\end{aligned}
\right.\tag{$CKRF^{\eta}_{0}$}
\end{equation}
which exists on $[0,T^\eta_{\max})\times X$ in the following sense. 
\begin{itemize}
  \item For any $0<T<T^\eta_{\max}$, there exists a constant $C$ such that for any $t\in(0,T)$,
\begin{equation}\label{0513001}
e^{-\frac{C}{t^2}}\omega_o\leqslant\omega(t)\leqslant e^{\frac{C}{t^2}}\omega_o\ \ \ \text{in}\ \ \ X\setminus D;
\end{equation}
   \item  In $(0,T^\eta_{\max})\times(X\setminus D)$, $\omega(t)$ satisfies the smooth twisted K\"ahler-Ricci flow
 \begin{equation}\label{0513002}
\frac{\partial \omega(t)}{\partial t}=-{\rm Ric}(\omega(t))+\eta;
\end{equation}
  \item On $(0,T^\eta_{\max})\times X$, $\omega(t)$ satisfies K\"ahler-Ricci flow \eqref{PK} in the sense of currents;
  \item There exists a metric potential $\varphi(t)\in\mathcal{C}^{\infty}\left((0,T^\eta_{\max})\times(X\setminus D)\right)$ with respect to $\omega_t$ such that $\omega(t)=\omega_{t}+\dd\varphi(t)$,
  \begin{equation}\label{0513003}
  \lim\limits_{t\rightarrow0^{+}}\|\varphi(t)-\varphi_0\|_{L^{1}(X)}=0\ \ \ \text{\rm and}\ \ \ \lim\limits_{t\rightarrow0^{+}}\varphi(t)=\varphi_0\ \ \ \text{in}\ \ \ X\setminus D;
\end{equation}
\item On $[0,T^\eta_{\max})$, $\|\varphi(t)-t\psi_o\|_{L^\infty(X)}\leqslant C$;
\item For any $0<T<T^\eta_{\max}$, there exist constants $a$ and $C$ such that for any $t\in(0,T)$,
\begin{equation}\label{0513004}
-C+a\log t\leqslant\dot\varphi(t)-\psi_o\leqslant\frac{C}{t};
\end{equation}
\end{itemize}
where $\omega_o$ and $\psi_o$ are a cusp K\"ahler metric and its metric potential coming from \eqref{0328001} and \eqref{0321016} respectively, $\omega_t$ comes from \eqref{0320010}, and
\begin{equation}\label{0513005}
T^{\eta}_{\max}=\sup\left\{t\geqslant0\Big| [\omega]+t\left(-c_1(X)+c_1(L_D)+[\eta]\right)\geqslant0\right\}.
\end{equation}
\end{thm}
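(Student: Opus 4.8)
\emph{The plan} is to descend to the level of K\"ahler potentials, establish a priori estimates that are uniform in the cone angle (but which are allowed to degenerate along $D$ and as $t\to 0^+$), and then pass to the limit $\gamma\to 0^+$ together with a uniqueness argument that identifies the limiting flow. \textbf{Step 1 (Scalar reformulation and regularization).} Fix a smooth Hermitian metric $h$ on $L_D$ with defining section $s$ of $D$, and let $\omega_t$ be the reference path from \eqref{0320010}. Then \eqref{CK} is equivalent to the parabolic complex Monge--Amp\`ere equation
\begin{equation*}
\frac{\partial\varphi_\gamma}{\partial t}=\log\frac{(\omega_t+\dd\varphi_\gamma)^n}{\Omega}+(1-\gamma)\log|s|^2_h,\qquad \varphi_\gamma|_{t=0}=\varphi_0,
\end{equation*}
for a suitable fixed smooth volume form $\Omega$, with $\omega_\gamma(t)=\omega_t+\dd\varphi_\gamma(t)$. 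I would work with the $\varepsilon$-regularized flows obtained by replacing $|s|^2_h$ with $|s|^2_h+\varepsilon$ (and, if needed, also smoothing $\varphi_0$), producing smooth approximations $\omega_{\gamma,\varepsilon}(t)$, and aim for all estimates to be uniform in $\varepsilon$ and in small $\gamma$, sending $\varepsilon\to 0$ first and $\gamma\to 0^+$ afterwards.

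\textbf{Step 2 ($\gamma$-uniform zeroth order estimate --- the crux).} Using the cusp potential $\psi_o$ from \eqref{0321016} --- and, at the approximate level, a one-parameter family of smoothings of it modeled on $-\log(-\log(|s|^2_h+\delta))$ --- as barriers, I would prove
\begin{equation*}
\|\varphi_\gamma(t)-t\psi_o\|_{L^\infty(X)}\leqslant C
\end{equation*}
with $C$ independent of small $\gamma$ (and of $\varepsilon$). The point is that $t\psi_o$, corrected by an $O(t)$ term absorbing the difference between $\log|s|^2_h$ and the cone term, together with the controlled ratio $\omega_o^n/\Omega$, is an approximate super/sub-solution for \emph{every} small $\gamma$; this is exactly where the ``combine the singular term with the potential'' device of \cite{JWLXZ19} fails for a general $\varphi_0\in L^\infty$, and one is forced to monitor $\varphi_\gamma-t\psi_o$ rather than the potential itself. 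Combined with a stability/comparison estimate, this also yields $\|\varphi_\gamma(t)-\varphi_0\|_{L^1(X)}\to 0$ and $\varphi_\gamma(t)\to\varphi_0$ locally in $X\setminus D$ as $t\to 0^+$, uniformly in $\gamma$.

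\textbf{Step 3 (Local higher order and $\dot\varphi$-estimates off $D$).} On a fixed $K\Subset X\setminus D$ I would derive, uniformly in $\gamma$ and $\varepsilon$, a parabolic second order estimate of the form $\tr_{\omega_o}\omega_\gamma(t)\leqslant e^{\frac{C}{t^2}}$ on $K$ (the $t^{-2}$ reflecting the merely bounded initial potential), via a Chern--Lu / Aubin--Yau computation that uses the lower Ricci bound of $\omega_o$ on $X\setminus D$ together with the barrier of Step 2; coupled with a uniform bound for $\dot\varphi_\gamma$, this feeds parabolic Evans--Krylov and Schauder theory and gives uniform $\mathcal{C}^\infty_{loc}\big((0,T^\eta_{\max})\times(X\setminus D)\big)$ bounds. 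The upper bound $\dot\varphi_\gamma-\psi_o\leqslant C/t$ follows from the maximum principle applied to $t\dot\varphi_\gamma-\varphi_\gamma$ (plus linear terms), and the lower bound $-C+a\log t\leqslant\dot\varphi_\gamma-\psi_o$ from Step 2, which bounds $\log\big((\omega_t+\dd\varphi_\gamma)^n/\Omega\big)$ below by $a\log t-C$.

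\textbf{Step 4 (Limit $\gamma\to 0^+$ and uniqueness).} By Arzel\`a--Ascoli the flows $\omega_\gamma(t)$ subconverge in $\mathcal{C}^\infty_{loc}\big((0,T^\eta_{\max})\times(X\setminus D)\big)$ to a solution $\omega(t)$ of \eqref{0513002}, and the $C^0$-control of Step 2 upgrades this to convergence of potentials, giving \eqref{0513003}, \eqref{0513001}, the bound $\|\varphi(t)-t\psi_o\|_{L^\infty(X)}\leqslant C$ and \eqref{0513004}; since $[D]$ is the Lelong current of $\log|s|^2_h$, closedness of positive currents shows $\omega(t)$ solves \eqref{PK} globally in the sense of currents. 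To remove the dependence on the subsequence I would prove a uniqueness statement for \eqref{PK} in this class --- e.g.\ a comparison principle for the associated degenerate parabolic Monge--Amp\`ere equation with $L^\infty$ initial data and prescribed cusp asymptotics along $D$ --- thereby identifying the limit with the flow produced directly by \cite{LSZ, JWLXZ4} for the twisting current $[D]+\eta$ (in the sense of Definition~\ref{0416002}). \emph{The main obstacle} is Step 2: the conical potentials degenerate (their supremum over $X$ blows up like $1/\gamma$) while their renormalized limit is $\psi_o$, and with a general $L^\infty$ initial datum the singular term can no longer be absorbed into the potential, so constructing a barrier --- morally a smoothing of $t\psi_o$ --- that is simultaneously an approximate super/sub-solution for \emph{every} small $\gamma$ is the technical heart of the argument.
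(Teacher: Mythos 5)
Your architecture matches the paper's (scalar reformulation, $\gamma$-uniform estimates relative to a cusp-type potential, diagonal limit, uniqueness), and you correctly identify the crux, but several of your key claims have genuine gaps. First, the uniform bound $\|\varphi_\gamma(t)-t\psi_o\|_{L^\infty(X)}\leqslant C$ you assert in Step 2 is false for the conical flows themselves: $\varphi_\gamma(t)$ is continuous (hence bounded) on $X$ while $t\psi_o\to-\infty$ along $D$, so $\varphi_\gamma(t)-t\psi_o$ blows up near $D$ for every fixed $\gamma>0$. The correct object to control is $\varphi_\gamma(t)-t\psi_\gamma$, where $\psi_\gamma$ is Guenancia's \emph{conical} potential \eqref{0320019} with the same cone angle $2\pi\gamma$ as the flow; the whole mechanism rests on the $\gamma$-uniform Monge--Amp\`ere identity \eqref{03200241}, which pairs $(1-\gamma)\log|s|_h^2$ with $\psi_\gamma$. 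Your proposed cusp-type smoothings $-\log(-\log(|s|_h^2+\delta))$ do not satisfy such an identity against the $\gamma$-dependent singular term, so your barrier construction does not close. Only after passing to the limit does one obtain the bound against $t\psi_o$. Second, your derivation of the lower bound $\dot\varphi_\gamma-\psi_\gamma\geqslant a\log t-C$ ``from Step 2'' is a non sequitur: an $L^\infty$ bound on the potential gives no pointwise lower bound on the Monge--Amp\`ere measure. The paper needs a separate parabolic maximum-principle argument (Proposition \ref{0321027}) on the auxiliary function $(t+\sigma)(\dot\varphi_\gamma-\psi_\gamma)+(\varphi_\gamma-t\psi_\gamma)-n\log t-\epsilon\log|s|_h^2$, where the shifted coefficient $t+\sigma$ is essential to extract a uniform term $\sigma\tr_{\omega_\gamma(t)}\omega_\gamma$ that controls $\log(\omega_\gamma^n(t)/\omega_\gamma^n)$ via the mean-value inequality.

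Two further points. Your second-order estimate is stated only on compact sets $K\Subset X\setminus D$ with constants depending on $K$, which cannot yield the global two-sided bound \eqref{0513001} with a single constant; the paper proves a global estimate $\tr_{\omega_\gamma(t)}\omega_\gamma\leqslant e^{C/t^2}$ on all of $X\setminus D$ (Proposition \ref{0229011}), comparing against $\omega_\gamma$ (whose bisectional curvature is uniformly bounded by Guenancia's Lemma \ref{032400466}) rather than $\omega_o$, and using an auxiliary function with the extra terms $A(t-\tau)(\dot\varphi_\gamma-\psi_\gamma)$ and $An\log(t-\delta)$. Finally, the assertion that $\varphi_\gamma(t)\to\varphi_0$ as $t\to0^+$ ``uniformly in $\gamma$'' by ``a stability/comparison estimate'' glosses over one of the delicate points: the rate of convergence at $t=0$ a priori depends on $\gamma$, so one cannot interchange the limits $t\to0$ and $\gamma\to0$ directly. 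The paper handles this by sandwiching $\varphi_\gamma(t)$ between an explicit super-solution built from the smooth approximating flows and an explicit sub-solution built from an auxiliary elliptic Monge--Amp\`ere equation (Propositions \ref{0329001} and \ref{0329010}), obtaining the $\gamma$-independent lower bound $\varphi_0+t\psi_\gamma+n(t\log t-t)-Ct\leqslant\varphi_\gamma(t)$; some such quantitative construction is needed to justify \eqref{0513003}.
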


As is well known, the key steps in showing the limit behavior of the twisted conical K\"ahler-Ricci flow \eqref{CK} as cone angle tends to zero are to derive uniform estimates which do not depend on $\gamma$ on this flow. But it seems not to be easy. A conical K\"ahler metric with cone angle $2\pi\gamma$ along divisor is $L^{p_\gamma}$-integrable with $p_\gamma\in\left(1,\frac{1}{1-\gamma}\right)$, so for any fix $\gamma>0$, we can use the classical results in partial differential equations, such as Ko{\l}odziej's $L^{p}$-estimate and stability theorem  \cite{K000, K} etc, to derive the estimates on the twisted conical K\"ahler-Ricci flow \eqref{CK} and its smooth approximation sequence (see \cite{JWLXZ, JWLXZ1, YQW, LMSH1} for more details). But all above estimates depend on $\gamma$ and will converge to $\infty$ as $\gamma$ tends to zero. In addition, since there is no available theory for complex Monge-Amp\`ere equation with $L^1$ right hand side, and also $\omega_\gamma(t)$ may converge to a non-integrable current, there is no direct results that can be applied to derive the uniform estimates (independent of $\gamma$) on the twisted conical K\"ahler-Ricci flow \eqref{CK} with sufficiently small positive $\gamma$. Therefore, we need some new arguments to prove the uniform estimates on the twisted conical K\"ahler-Ricci flow when we consider its limit behavior .

In order to overcome above difficulties, we introduce some new ideas to derive the uniform estimates on the twisted conical K\"ahler-Ricci flow \eqref{CK} with sufficiently small positive $\gamma$. In this process, a special function (see \eqref{0320019}), thanks to Guenancia \cite{G11}, which is a metric potential for conical K\"ahler metric and converges to the potential of a cusp K\"ahler metric, plays an important role. More precisely, by using Guenancia's function, we construct a special super-solution to derive a uniform upper bound on the potential of $\omega_\gamma(t)$. Then we obtain the uniform lower bound on the potential in a short time by writing the parabolic Monge-Amp\`ere equation as an elliptic one and by proving a uniform upper bound on the time derivative of potential. At last, by constructing a new auxiliary function, we successively obtain the uniform lower bound on the time derivative of potential, which implies a uniform lower bound on the potential for any time. Combining all these estimates together, we prove the uniform Laplacian $\mathcal{C}^2$-estimate by introducing a new auxiliary function again, and then we show the convergence of the twisted conical K\"ahler-Ricci flow in the subsequence sense as the cone angle tends to zero. Moreover, since the limit behavior of the twisted conical K\"ahler-Ricci flow near the initial time depends on $\gamma$, we can not let $\gamma$ tends to zero directly when we consider the limit behavior of the limit flow near the initial time. Our method of studying this is to construct sub-solution and super-solution and then study the limit behavior of these solutions. Furthermore, by drawing inspiration from Di Nezza-Lu's \cite{NL2017} uniqueness theorem on smooth K\"ahler-Ricci flow with weak initial metric (see also the author's work \cite{JWLXZ4} for the conical case), we prove a uniqueness theorem for the K\"ahler-Ricci flow with cusp singularity along the divisor $D$, which implies that the limit flow of above convergence is unique.

As a corollary of Theorem \ref{0416001}, we give the following existence and uniqueness result.
\begin{cor}\label{0419001}
There exists a unique weak solution to the twisted cusp K\"ahler-Ricci flow on $[0,T^\eta_{\max})\times X$ in the sense of Theorem \ref{0416001}. 
\end{cor}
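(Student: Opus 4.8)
The plan is to deduce Corollary~\ref{0419001} directly from Theorem~\ref{0416001}. For \textbf{existence}, the flow $\omega(t)$ produced in Theorem~\ref{0416001} as the limit (as the cone angle tends to $0$) of the twisted conical K\"ahler-Ricci flows \eqref{CK} already satisfies, by construction, every property defining a weak solution to the twisted cusp K\"ahler-Ricci flow on $[0,T^\eta_{\max})\times X$: the two-sided bound \eqref{0513001} by the fixed cusp metric $\omega_o$, the smooth twisted flow \eqref{0513002} on $(0,T^\eta_{\max})\times(X\setminus D)$, the current equation \eqref{PK} on all of $X$, the potential regularity together with the initial behavior \eqref{0513003}, the global bound $\|\varphi(t)-t\psi_o\|_{L^\infty(X)}\leqslant C$, and the one-sided estimates \eqref{0513004} on $\dot\varphi(t)$. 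Hence existence is immediate, and only uniqueness requires an argument.

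For \textbf{uniqueness}, suppose $\omega_1(t)$ and $\omega_2(t)$ are two weak solutions in the sense of Theorem~\ref{0416001}, with potentials $\varphi_1(t),\varphi_2(t)$ relative to the reference path $\omega_t$ of \eqref{0320010}, so that $\omega_i(t)=\omega_t+\dd\varphi_i(t)$. On $(0,T^\eta_{\max})\times(X\setminus D)$ each $\varphi_i$ solves a parabolic complex Monge-Amp\`ere equation of the form $\dot\varphi_i=\log\frac{(\omega_t+\dd\varphi_i)^n}{\Omega}$, where $\Omega$ is a volume form that is smooth on $X\setminus D$ and whose pole along $D$ encodes the cusp singularity; moreover $\|\varphi_i(t)-t\psi_o\|_{L^\infty(X)}\leqslant C$ and $\varphi_i(t)\to\varphi_0$ as $t\to0^+$, both in $L^1(X)$ by \eqref{0513003} and locally uniformly on $X\setminus D$. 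I would then follow the strategy of Di Nezza-Lu \cite{NL2017}, in the cusp-adapted form already used by the authors in \cite{JWLXZ4} for the conical case: for fixed $t$, apply the comparison principle to $\varphi_1(t)-\varphi_2(t)$ on $X\setminus D$, using the cusp metric $\omega_o$ (together with the two-sided bound \eqref{0513001}) as a barrier near $D$ and the fact that $\varphi_1(t)-\varphi_2(t)$ is globally bounded, hence extends across $D$ with zero Lelong number and no mass on $D$; then close the argument with a Gronwall-type estimate in $t$ for $\sup_X|\varphi_1(t)-\varphi_2(t)|$, feeding in the common initial condition $\varphi_i(t)\to\varphi_0$ via \eqref{0513003}, which forces $\varphi_1\equiv\varphi_2$ and therefore $\omega_1(t)=\omega_2(t)$.

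I expect the main obstacle to be the behavior near $t=0$: since the initial datum $\varphi_0\in L^\infty(X)\cap\mathrm{PSH}(X,\omega)$ is only bounded, the convergence $\varphi_i(t)\to\varphi_0$ is not uniform, so one cannot simply pass to the limit $t\to0$ in a naive maximum-principle argument. One must instead exploit the precise quantitative estimates supplied by Theorem~\ref{0416001} near the initial time --- the bound $\|\varphi(t)-t\psi_o\|_{L^\infty(X)}\leqslant C$ and the asymptotics $-C+a\log t\leqslant\dot\varphi(t)-\psi_o\leqslant C/t$ --- to extract an effective modulus of continuity at $t=0$ valid for \emph{any} weak solution, and only then run the comparison. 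A secondary, more routine, point is to check that the divisor $D$ causes no loss of uniqueness; this is handled by the uniform equivalence \eqref{0513001} to the fixed cusp metric $\omega_o$ off $D$, which confines the whole analysis to a well-posed parabolic problem on $X\setminus D$ with controlled barriers along $D$.
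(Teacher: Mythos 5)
Your overall architecture matches the paper's: existence is indeed immediate from the construction (Theorem \ref{0506001} / Theorem \ref{0330018}), and the entire content of the corollary is the uniqueness statement, which the paper proves as Theorem \ref{0402021} and Theorem \ref{0318007} via a Di Nezza--Lu type comparison with a $\log|s|_h^2$ barrier that forces the maximum point off $D$ (controlled by $-\hat{\Delta}\log|s|_h^2=\int_0^1\tr_{\omega_s}\theta\,ds$ together with the metric bound \eqref{0513001}). Up to that point your plan is sound, and you correctly identify $t=0$ as the crux.

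The genuine gap is in how you propose to handle $t=0$. You want to ``extract an effective modulus of continuity at $t=0$ valid for any weak solution'' from \eqref{0513004} and $\|\varphi(t)-t\psi_o\|_{L^\infty(X)}\leqslant C$, and then run a symmetric comparison. But no two-sided modulus exists: integrating the lower bound $\dot\varphi(t)-\psi_o\geqslant -C+a\log t$ in time does give $\varphi(t)\geqslant\varphi_0+t\psi_o+a(t\log t-t)-Ct$ on $X\setminus D$ (this is essentially Proposition \ref{0401005}), whereas the upper bound $\dot\varphi(t)-\psi_o\leqslant C/t$ is not integrable at $t=0$, so there is no corresponding upper control of $\varphi(t_0)-\varphi_0$; indeed $\varphi(t_0)\to\varphi_0$ only in $L^1(X)$ and locally uniformly off $D$, never uniformly on $X$ for merely bounded $\varphi_0$. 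The paper's argument is therefore necessarily asymmetric: one shifts time to $t_0>0$, uses the one-sided lower bound on \emph{one} solution to replace its value at $t_0$ by $\phi_0+t_0\psi_o-C_{t_0}$, runs the \emph{parabolic} maximum principle on the shifted flows (not a per-time-slice elliptic comparison closed by a Gronwall estimate --- the maximum principle shows directly that the perturbed difference attains its maximum at the initial slice), and then disposes of the \emph{other} solution's value at $t_0$ by Hartogs' Lemma, using that $\varphi(t_0)+t_0\log\bigl(|s|_h^{2ae^{-2C/t_0^2}}\log^2|s|_h^2\bigr)$ is uniformly quasi-psh and converges to $\varphi_0$ in $L^1$ as $t_0\to0$. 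Without this combination of the one-sided sub-solution bound and Hartogs' Lemma, the comparison cannot be closed at $t=0$, and your plan as written stalls exactly there.
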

The smooth K\"ahler-Ricci flow with singularity along a divisor in quasi-projective manifold $X\setminus D$ was first studied by Lott-Zhang \cite{LZ,LZ1}. They thoroughly studied the existence and convergence of K\"ahler-Ricci flow whose initial metric is finite volume K\"ahler metric with ``superstandard spatial asymptotics" (Definition $8.10$ in \cite{LZ}), and they gave some interesting examples. Their flow keeps ``superstandard spatial asymptotics" and this type of metrics contain cusp K\"ahler metrics. Then the authors \cite{JWLXZ19} studied the K\"ahler-Ricci flow with cusp singularity along a divisor on $X$ globally by approximating a sequence of conical K\"ahler-Ricci flows as their cone angles tend to zero in the case that the twisted canonical line bundle $K_X+L_D$ is ample, and later Chau-Li-Shen \cite{CLS} generalized Lott-Zhang's \cite{LZ} results to more general initial metric, which means that the smooth K\"ahler-Ricci flow with cusp singularity along a divisor in $X\setminus D$ can start from a metric without cusp singularity. Recently, Li-Shen-Zheng \cite{LSZ} constructed a maximal solution (in the sense of potential) to the K\"ahler-Ricci flow \eqref{PK} on $[0,T^\eta_{\max})\times X$ in the sense of Definition \ref{0510002}. However it is not clear whether their maximal solution is the unique one to the K\"ahler-Ricci flow \eqref{PK}. Here we prove that the solution is indeed unique.

The paper is organized as follows. We first give the definition and some known results of the twisted conical K\"ahler-Ricci flow, and we prove some maximum principles in section \ref{Preliminaries}. Then we prove the uniform estimates including the $\mathcal{C}^0$-estimates on metric potentials and its time derivative in section \ref{Bou1}. In section \ref{Bou2}, we prove the uniform Laplacian $\mathcal{C}^2$ and high order estimates. We then show the limit behavior of the conical K\"ahler-Ricci flow as the cone angle tends to zero, and we give the existence of the limit flow in section \ref{Exi}.  At the last section, we prove a uniqueness theorem for the K\"ahler-Ricci flow with cusp singularity along the divisor.

\section{Preliminaries}\label{Preliminaries}

In this section, we recall some definitions and results on the twisted conical K\"ahler-Ricci flow, and then we prove some maximum principles for the equations with conical singularity. 

Let $(X, \omega)$ be a compact K\"ahler manifold with complex dimension $n$ and $D$ be a smooth divisor with definition section $s$, that is, $D=\{s=0\}$. Assume that $\hat\omega\in [\omega]$ is a positive closed current with potential $\varphi_0\in L^\infty(X)\cap {\rm PSH(X,\omega)}$, that is,
\begin{equation}\label{0320001}
\hat\omega=\omega+\dd \varphi_0.
\end{equation} 
The twisted conical K\"ahler-Ricci flow with twisted form $\eta$ which is a smooth closed $(1,1)$-form is as follows
\begin{equation}\label{CK}
\left\{
\begin{aligned}
 &\ \frac{\partial \omega_{\gamma}(t)}{\partial t}=-{\rm Ric}(\omega_{\gamma}(t))+(1-\gamma)[D]+\eta\\
 &\ \ \ \ \ \ \ \ \ \ \ \ \ \ \ \ \ \ \ \ \ \ \ \ \ \ \ \ \ \ \ \ \ \ \ \ \ \ \ \ \ \ \ \ \ \ \ \ \ \ \ \ \ \ \ \ .\\
 &\ \omega_{\gamma}(t)|_{t=0}=\hat\omega
\end{aligned}
\right.\tag{$CKRF^{\eta}_{\gamma}$}
\end{equation}
Denote 
\begin{equation}\label{0320001}
T^{\eta}_{\max}=\sup\left\{t\geqslant0\Big| [\omega]+t\left(-c_1(X)+c_1(L_D)+[\eta]\right)\geqslant0\right\},
\end{equation}
where $L_D$ is the line bundle associated with divisor $D$. 

For any $T<T^\eta_{\max}$, there exists a smooth volume form $\Omega$ such that
\begin{equation}\label{0320003}
\omega_{t}=\omega+t\left(-{\rm Ric}(\Omega)+\theta+\eta\right)>0
\end{equation}
for all $t\in[0,T]$, where $\theta$ is the curvature of a smooth Hermitian metric $h$ on $L_D$. In fact, since 
\begin{equation}\label{0320004}
[\omega]+T(-c_1(X)+c_1(L_D)+[\eta])>0,
\end{equation}
there exists a smooth positive closed $(1,1)$-form
\begin{equation}\label{0320005}
\kappa\in[\omega]+T(-c_1(X)+c_1(L_D)+[\eta]).
\end{equation}
For any $t\in[0,T]$, we denote
\begin{equation}\label{0320006}
\omega_{t}=\omega+\frac{t}{T}\left(\kappa-\omega\right)=(1-\frac{t}{T})\omega+\frac{t}{T}\kappa.
\end{equation}
It is obvious that $\omega_t$ is positive and 
\begin{equation}\label{0320007}
\omega_{t}\in[\omega]+t\left(-c_1(X)+c_1(L_D)+[\eta]\right).
\end{equation}

On the other hand, since
\begin{equation}\label{0320008}
-\frac{1}{T}\left(\kappa-\omega\right)+\theta+\eta\in c_1(X)
\end{equation} 
is a smooth closed $(1,1)$-form, by Calabi-Yau theorem, there exists a smooth volume form $\Omega$ such that
\begin{equation}\label{0320009}
\rm Ric(\Omega)=-\frac{1}{T}\left(\kappa-\omega\right)+\theta+\eta,
\end{equation}
which implies that for any $t\in[0,T]$,
\begin{equation}\label{0320010}
\omega_{t}=\omega+t\left(-{\rm Ric}(\Omega)+\theta+\eta\right).
\end{equation}
It is obvious that there is a constant $C>1$ depending only on $T$, $\theta$, $\eta$ and $\omega$ such that
\begin{equation}\label{0320011}
\frac{1}{C}\omega\leqslant\omega_{t}\leqslant C\omega
\end{equation}
for all $t\in[0,T]$. We denote 
\begin{equation}\label{0320012}
\Omega=e^{-h}\omega^n\ \ \ \ \ \text{and}\ \ \ \ \ \omega_{t}=\omega+t \nu,
\end{equation}
where 
\begin{equation}\label{0320014}
\nu=-{\rm Ric}(\Omega)+\theta+\eta.
\end{equation}
Then we can write the flow \eqref{PK} as the following parabolic Monge-Amp\`ere equation
\begin{equation}\label{PC}
\left\{
\begin{aligned}
 &\ \frac{\partial \varphi(t)}{\partial t}=\log\frac{\left(\omega_{t}+\dd\varphi(t)\right)^n}{\omega^n}+h+\log|s|_h^2\\
 &\ \ \ \ \ \ \ \ \ \ \ \ \ \ \ \ \ \ \ \ \ \ \ \ \ \ \ \ \ \ \ \ \ \ \ \ \ \ \ \ \ \ \ \ \ \ \ \ \ \ \ \ \ \ \ \ \ \ \ \ \ \ \ .\\
 &\ \varphi(t)|_{t=0}=\varphi_0
\end{aligned}
\right.
\end{equation}
We denote
\begin{equation}\label{0320002}
T^{\eta}_{\gamma, \max}=\sup\left\{t\geqslant0\Big| [\omega]+t\left(-c_1(X)+(1-\gamma)c_1(L_D)+[\eta]\right)\geqslant0\right\}.
\end{equation}
For any $T<T^\eta_{\max}$, there holds $T<T^\eta_{\gamma, \max}$ and
\begin{equation}\label{0320013}
\omega_{\gamma t}:=\omega_t-\gamma t\theta=\omega+t\left(-{\rm Ric}(\Omega)+(1-\gamma)\theta+\eta\right)>0
\end{equation}
for all $t\in[0,T]$ and $0<\gamma\ll1$ sufficiently small. From \eqref{0320011}, there also exists a large constant $C>1$ depending only on $\omega$, $T$, $\theta$ and $\eta$ such that 
\begin{equation}\label{0320016}
\frac{1}{C}\omega\leqslant\omega_{\gamma t}\leqslant C\omega
\end{equation}
for all $t\in[0,T]$ and $0<\gamma\ll1$. If we denote
\begin{equation}\label{0320015}
\nu_\gamma=-{\rm Ric}(\Omega)+(1-\gamma)\theta+\eta=\nu-\gamma\theta,
\end{equation}
then 
\begin{equation}\label{0320017}
\omega_{\gamma t}=\omega+t\nu_\gamma,
\end{equation}
and the twisted conical \krf \eqref{CK} can be written as equation
\begin{equation}\label{CC}
\left\{
\begin{aligned}
 &\ \frac{\partial \varphi_{\gamma}(t)}{\partial t}=\log\frac{\left(\omega_{\gamma t}+\dd\varphi_{\gamma}(t)\right)^n}{\omega^n}+h+(1-\gamma)\log|s|_h^2\\
 &\ \ \ \ \ \ \ \ \ \ \ \ \ \ \ \ \ \ \ \ \ \ \ \ \ \ \ \ \ \ \ \ \ \ \ \ \ \ \ \ \ \ \ \ \ \ \ \ \ \ \ \ \ \ \ \ \ \ \ \ \ \ \ \ \ \ \ \ \ \ \ \ \ \ \ \ \ .\\
 &\ \varphi_{\gamma}(t)|_{t=0}=\varphi_0
\end{aligned}
\right.
\end{equation}
\begin{defi}\label{0220003}
By saying a closed positive $(1,1)$-current $\omega$ with locally bounded potential is a conical K\"ahler metric with cone angle $2\pi \gamma $ ($0<\gamma\leqslant1$) along $D$, we mean that $\omega$ is a smooth K\"ahler metric in $M\setminus D$. And near each point $p\in D$, there exists a local holomorphic coordinate $(z_{1}, \cdots, z_{n})$ in a neighborhood $U$ of $p$ such that $D=\{z_{n}=0\}$ and that $\omega$ is asymptotically equivalent to the model conical metric
\begin{equation}\label{0220004}
 \sum_{j=1}^{n-1} \sqrt{-1}dz_{j}\wedge d\overline{z}_{j}+ \frac{\sqrt{-1} dz_{n}\wedge d\overline{z}_{n}}{|z_{n}|^{2(1-\gamma)}} \ \ \ on\ \ U.\end{equation}
\end{defi}
In this paper, we always choose Guenancia's metric (Lemma $3.1$ in \cite{G11})
\begin{equation}\label{0320018}
\begin{aligned}
\omega_\gamma&=\omega+\dd\psi_\gamma\\
&=\omega+\frac{\gamma^2\left\langle D's,D's\right\rangle}{|s|_h^{2(1-\gamma)}\left(1-|s|_h^{2\gamma}\right)^2}-\frac{\gamma|s|_h^{2\gamma}}{1-|s|_h^{2\gamma}}\theta
\end{aligned}
\end{equation}
as the model conical K\"ahler metric, where 
\begin{equation}\label{0320019}
\psi_\gamma=-\log\left(\frac{1-|s|_h^{2\gamma}}{\gamma}\right)^2
\end{equation}
and $D'$ is the $(1,0)$ part of the Chern connection of the line bundle $L_D$ with Hermitian metric $h$. Since function $f(t)=\frac{\gamma t^\gamma}{1-t^\gamma}$ increases with respect to $t\in[0,1)$, after rescaling $h$ such that $|s|_h^2<\delta$, we have
\begin{equation}\label{0320020}
0\leqslant\frac{\gamma|s|_h^{2\gamma}}{1-|s|_h^{2\gamma}}\leqslant\frac{\gamma\delta^{\gamma}}{1-\delta^{\gamma}}\to-\frac{1}{\log\delta}\ \ \ \text{as}\ \gamma\to0,
\end{equation}
which implies that 
\begin{equation}\label{0320021}
0\leqslant\frac{\gamma|s|_h^{2\gamma}}{1-|s|_h^{2\gamma}}\leqslant-\frac{2}{\log\delta}\ \ \ \ \text{for}\ \ \ 0<\gamma\ll1,
\end{equation}
and so
\begin{equation}\label{0320022}
\dd\psi_\gamma=\frac{\gamma^2\left\langle D's,D's\right\rangle}{|s|_h^{2(1-\gamma)}\left(1-|s|_h^{2\gamma}\right)^2}-\frac{\gamma|s|_h^{2\gamma}}{1-|s|_h^{2\gamma}}\theta\geqslant-\frac{\gamma|s|_h^{2\gamma}}{1-|s|_h^{2\gamma}}\theta\geqslant\frac{2C}{\log\delta}\omega
\end{equation}
for $0<\gamma\ll1$, where $C$ is a positive constant satisfying $\theta\leqslant C\omega$. Therefore, if we choose $0<\delta\ll1$ such that
\begin{equation}\label{0320023}
\dd\psi_\gamma\geqslant-\frac{1}{8\left(C-1\right)}\omega,
\end{equation}
where $C$ comes from \eqref{0320016}, then there holds
\begin{equation}\label{0320024}
\omega_\gamma=\omega+\dd\psi_\gamma\geqslant\left(1-\frac{1}{8\left(C-1\right)}\right)\omega\geqslant\frac{1}{2}\omega
\end{equation}
for $0<\gamma\ll1$. From Guenancia's arguments in \cite{G11}, we know that there exists a uniform constant $C$ independent of $\gamma$ such that
\begin{equation}\label{03200241}
\left|\log\frac{\omega_\gamma^n}{\omega^n}+(1-\gamma)\log|s|_h^2-\psi_\gamma\right|\leqslant C.
\end{equation}
Moreover, from the authors' observation (Proposition $2.8$ in \cite{JWLXZ19}), $\psi_\gamma$ is monotone decreasing to $\psi_o$ as $\gamma\searrow0^+$. Hence $\omega_\gamma$ converges in $\mathcal{C}^\infty_{loc}$-topology in $ X\setminus D$ to a K\"ahler metric
\begin{equation}\label{0328001}
\begin{aligned}
\omega_o&=\omega+\dd\psi_o\\
&=\omega+\frac{\left\langle D's,D's\right\rangle}{|s|_h^{2}\log^2|s|_h^2}+\frac{1}{\log|s|_h^2}\theta
\end{aligned}\end{equation}
as $\gamma$ tends to $0$, and
\begin{equation}\label{0321016}
-\log\log^2|s|_h^2=\psi_o\leqslant\psi_\gamma\leqslant\psi_1=-\log\left(1-|s|_h^2\right)^2.
\end{equation}
So there exists a uniform constant $C$ depending only on $\omega$, $\theta$ and $T$ such that
\begin{equation}\label{0321014}
\psi_\gamma\leqslant C\ \ \ \text{on}\ \ \ X.
\end{equation}
\begin{rem}\label{0509004}
In fact, $\omega_o$ is a cusp K\"ahler metric in the following sense.
\end{rem}
\begin{defi}\label{0509003}
By saying a closed positive $(1,1)$-current $\omega$ is a cusp K\"ahler metric along $D$, we mean that $\omega$ is a smooth K\"ahler metric in $M\setminus D$. And near each point $p\in D$, there exists a local holomorphic coordinate $(z_{1}, \cdots, z_{n})$ in a neighborhood $U$ of $p$ such that $D=\{z_{n}=0\}$ and that $\omega$ is asymptotically equivalent to the model cusp metric
\begin{equation}\label{0509002}
\sqrt{-1} \sum_{j=1}^{n-1} dz_{j}\wedge d\overline{z}_{j}+\frac{\sqrt{-1}dz_{n}\wedge d\overline{z}_{n}}{|z_{n}|^{2}\log^2|z_{n}|^{2}}\ \ \ \text{on}\ \ \ U.
\end{equation}
\end{defi}
Combining \eqref{0320023}, \eqref{0320024} with \eqref{0320016}, we also have
\begin{equation}\label{0320025}
\omega_{\gamma t}+\dd\psi_{\gamma}\leqslant C\omega+\dd\psi_\gamma=\left(C-1\right)\omega+\omega_\gamma\leqslant\left(2C-1\right)\omega_\gamma
\end{equation}
and
\begin{equation}\label{0320026}
\begin{split}
\omega_{\gamma t}+\dd\psi_{\gamma}&\geqslant \frac{1}{C}\omega+\dd\psi_\gamma=\frac{1}{C}\omega_\gamma+\left(1-\frac{1}{C}\right)\dd\psi_\gamma\\
&\geqslant\frac{1}{2C}\omega_\gamma+\frac{1}{4C}\omega+\left(1-\frac{1}{C}\right)\dd\psi_\gamma\\
&\geqslant\frac{1}{2C}\omega_\gamma+\frac{1}{4C}\omega-\left(1-\frac{1}{C}\right)\frac{1}{8\left(C-1\right)}\omega\\
&=\frac{1}{2C}\omega_\gamma+\frac{1}{8C}\omega.
\end{split}
\end{equation}
Therefore, there exists a uniform constant $C$ depending only on $\omega$, $\theta$, $\eta$ and $T$ such that for any $0<\gamma\ll1$, we have
\begin{equation}\label{0320027}
\frac{1}{C}\omega_\gamma\leqslant\omega_{\gamma t}+\dd\psi_\gamma\leqslant C\omega_\gamma\ \ \ \text{in}\ \ \ X\setminus D.
\end{equation}
\begin{defi}\label{0416002}
Assume that $\hat\omega\in [\omega]$ is a positive closed current with potential $\varphi_0\in L^\infty(X)$ (or more general, $\hat\omega\in[\omega]$ is a positive closed current with zero Lelong number). By saying $\omega_\gamma(t)$ is a weak solution to the twisted conical K\"ahler-Ricci flow \eqref{CK} with initial metric $\hat\omega$ on $(0,T')\times X$, we mean that it satisfies the following conditions.
\begin{itemize}
  \item  For any $0<\delta<T<T'$, there exists a constant $C$ such that for any $t\in(\delta,T)$,
\begin{equation}
C^{-1}\omega_\gamma\leqslant\omega_\gamma(t)\leqslant C\omega_\gamma\ \ \ in\ \ \ X\setminus D;
\end{equation}
   \item  In $(0,T')\times(X\setminus D)$, $\omega_\gamma(t)$ satisfies the smooth twisted K\"ahler-Ricci flow
  \begin{equation}\label{0221001}
\frac{\partial \omega_{\gamma}(t)}{\partial t}=-{\rm Ric}(\omega_{\gamma}(t))+\eta;
\end{equation}
  \item On $(0,T')\times X$, $\omega_\gamma(t)$ satisfies equation \eqref{CK} in the sense of currents;
  \item There exists a metric potential $\varphi_\gamma(t)\in \mathcal{C}^{2,\alpha,\gamma}\left( X\right)\cap \mathcal{C}^{\infty}\left(X\setminus D\right)$ such that $\omega_\gamma(t)=\omega_{\gamma t}+\sqrt{-1}\partial\bar{\partial}\varphi_\gamma(t)$, 
  \begin{equation}\label{0507002}
\lim\limits_{t\rightarrow0^{+}}\|\varphi_\gamma(t)-\varphi_{0}\|_{L^{1}(X)}=0\ \ \ \text{and}\ \ \ \lim\limits_{t\rightarrow0^{+}}\varphi_\gamma(t)=\varphi_{0}\ \ \text{in}\ \ X\setminus D.
\end{equation}
  \end{itemize}
\end{defi}
\begin{rem}\label{0515001}
By saying a function $\phi\in \mathcal{C}^{2,\alpha,\gamma}\left( X\right)$, we mean that $\phi$ is a $\mathcal{C}^{2,\alpha}$-function with respect to $\omega_\gamma$.
\end{rem}
\begin{defi}\label{0221001002}
We call $\phi(t)\in \mathcal{C}^{2,\alpha,\gamma}\left( X\right)\cap \mathcal{C}^{\infty}\left(X\setminus D\right)$ a weak solution to equation \eqref{CC} with initial value $\phi_0\in L^\infty(X)\cap{\rm PSH}(X,\omega)$ on $(0,T')\times X$ if $\phi(t)$ satisfies the following conditions.
\begin{itemize}
  \item  For $0<\delta<T<T'$, there exists a constant $C$ such that for any $t\in(\delta,T)$,
\begin{equation}\label{0515002}
C^{-1}\omega_\gamma\leqslant\omega_{\gamma t}+\sqrt{-1}\partial\bar{\partial}\phi(t)\leqslant C\omega_\gamma\ \ \ \text{in}\ \ \ X\setminus D;
\end{equation}
   \item  In $(0,T')\times(X\setminus D)$, $\phi(t)$ satisfies equation
 \begin{equation}\label{0515003}
\frac{\partial \phi(t)}{\partial t}=\log\frac{\left(\omega_{\gamma t}+\dd \phi(t)\right)^n}{\omega^n}+h+(1-\gamma)\log|s|_h^2;
\end{equation}
  \item $\lim\limits_{t\rightarrow0^{+}}\|\phi(t)-\phi_{0}\|_{L^{1}(X)}=0$ and $\lim\limits_{t\rightarrow0^{+}}\phi(t)=\phi_{0}$ in $X\setminus D$.
  \end{itemize}
\end{defi}
Since there are two singular terms in the twisted conical \krf \eqref{CK}, which are the initial metic $\hat\omega$ and current term $[D]$. By Demailly's regularization result \cite{JPD}, the authors \cite{JWLXZ4} choose the sequence $\omega_j$ approximating to $\hat\omega$, that is, there exists a sequence of smooth strictly $\omega$-psh functions $\varphi_{j}$ decreasing to $\varphi_0$ as $j\nearrow\infty$, and 
\begin{equation}\label{0220002}
\omega_j=\omega+\dd\varphi_j.
\end{equation}
For the current term $[D]$, as in \cite{JWLXZ}, we approximate it by $\theta_\ve=\theta+\dd\log\left(\ve^2+|s|_h^2\right)$ as $\ve\searrow0$. Then the flow \eqref{CK} is approximated by the smooth twisted K\"ahler-Ricci flow
\begin{equation}\label{TK}
\left\{
\begin{aligned}
&\ \frac{\partial \omega_{\gamma\ve j}(t)}{\partial t}=-{\rm Ric}(\omega_{\gamma\ve j}(t))+(1-\gamma)\theta_{\ve}+\eta\\
 &\ \ \ \ \ \ \ \ \ \ \ \ \ \ \ \ \ \ \ \ \ \ \ \ \ \ \ \ \ \ \ \ \ \ \ \ \ \ \ \ \ \ \ \ \ \ \ \ \ \ \ \ \ \ \ \ \ \ \ .\\
 &\ \omega_{\gamma\ve j}(t)|_{t=0}=\omega_j
\end{aligned}
\right.\tag{$TKRF^{\eta}_{\gamma\ve j}$}
\end{equation}
which is equivalent to the following complex Monge-Amp\`ere equation
\begin{equation}\label{TC}
\left\{
\begin{aligned}
 &\ \frac{\partial \varphi_{\gamma\ve j}(t)}{\partial t}=\log\frac{\left(\omega_{\gamma t}+\dd\varphi_{\gamma\ve j}(t)\right)^n}{\omega^n}+h+(1-\gamma)\log\left(\ve^2+|s|_h^2\right)\\
 &\ \ \ \ \ \ \ \ \ \ \ \ \ \ \ \ \ \ \ \ \ \ \ \ \ \ \ \ \ \ \ \ \ \ \ \ \ \ \ \ \ \ \ \ \ \ \ \ \ \ \ \ \ \ \ \ \ \ \ \ \ \ \ \ \ \ \ \ \ \ \ \ \ \ \ \ \ \ \ \ \ \ \ \ \ \ \ \ \ \ \ \ \ .\\
 &\ \varphi_{\gamma\ve j}(t)|_{t=0}=\varphi_{j}
\end{aligned}
\right.
\end{equation}
In \cite{JWLXZ4}, by using smooth approximation and the ideas of Guedj-Zeriahi \cite{VGAZ2} and Di Nezza-Lu's \cite{NL2017} regularizing properties for the smooth twisted K\"ahler-Ricci flow coming from a positive current with zero Lelong number, the authors and Zhang proved the following result.
\begin{thm}\label{0510001}
Let $\hat\omega\in[\omega]$ be a closed positive current with zero Lelong number. For any $\gamma\in(0,1]$ and smooth closed $(1,1)$-form $\eta$, there exists a unique weak solution to the twisted conical K\"ahler-Ricci flow \eqref{CK} on $(0,T^\eta_{\gamma,\max})\times X$ in the sense of Definition \ref{0416002}.
\end{thm}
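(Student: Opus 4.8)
Fix $\gamma\in(0,1]$ and $T<T^\eta_{\gamma,\max}$; throughout, constants may depend on $\gamma,T,\omega,\theta,\eta$ but not on the approximation parameters. The plan is to run the double approximation \eqref{TK}, which simultaneously regularizes the initial current through $\varphi_j\searrow\varphi_0$ and the current of integration through $\theta_\varepsilon=\theta+\dd\log(\varepsilon^2+|s|_h^2)$, to derive $\varepsilon,j$-uniform estimates, pass to the limit, and finally establish the regularizing property at $t=0$. For each fixed $\varepsilon>0$ and $j$, the twisted form $(1-\gamma)\theta_\varepsilon+\eta$ and the initial datum $\omega_j=\omega+\dd\varphi_j$ are smooth, so the standard existence theory for the twisted K\"ahler--Ricci flow provides a unique smooth solution $\varphi_{\gamma\varepsilon j}(t)$ of the parabolic Monge--Amp\`ere equation \eqref{TC} on $[0,T]\times X$. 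The first a priori estimate is the uniform $\mathcal{C}^0$-bound: using $\|\varphi_j\|_{L^\infty}\le\|\varphi_0\|_{L^\infty}$, the uniform comparability \eqref{0320027} of the reference metrics, and Guenancia's estimate \eqref{03200241}, I would construct barriers for \eqref{TC} of the form $\pm(At+B)$ plus a fixed multiple of $\psi_\gamma$ or of $t\psi_\gamma$ (see \eqref{0320019}), and apply the maximum principle to obtain $\|\varphi_{\gamma\varepsilon j}(t)-t\psi_\gamma\|_{L^\infty(X)}\le C$ on $[0,T]$. Differentiating \eqref{TC} in $t$ and applying the maximum principle to $t\dot\varphi_{\gamma\varepsilon j}-\varphi_{\gamma\varepsilon j}$ (and to a suitable auxiliary combination for the lower bound) then yields $\dot\varphi_{\gamma\varepsilon j}\le C/t$ together with a lower bound of the type $-C+a\log t$; in particular $\log\big((\omega_{\gamma t}+\dd\varphi_{\gamma\varepsilon j})^n/\omega^n\big)$ is controlled away from $D$ in terms of $\log|s|_h^2$.

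For the second-order estimate I would apply the parabolic Chern--Lu inequality with the fixed conical reference metric $\omega_\gamma=\omega+\dd\psi_\gamma$ of \eqref{0320018}: by Guenancia's construction $\omega_\gamma$ has holomorphic bisectional curvature bounded above with respect to itself, and by \eqref{0320027} it is uniformly comparable to $\omega_{\gamma t}$, so $\log\tr_{\omega_{\gamma\varepsilon j}(t)}\omega_\gamma-A\varphi_{\gamma\varepsilon j}$ with $A$ large satisfies a favourable differential inequality, yielding $C^{-1}\omega_\gamma\le\omega_{\gamma t}+\dd\varphi_{\gamma\varepsilon j}(t)\le C\omega_\gamma$ on $[\delta,T]\times X$, uniformly in $\varepsilon,j$. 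Local Calabi $\mathcal{C}^3$ and Evans--Krylov estimates give $\mathcal{C}^\infty_{loc}$-bounds on $(0,T]\times(X\setminus D)$, while the weighted Schauder estimates in the $\mathcal{C}^{2,\alpha,\gamma}$-spaces adapted to cone angle $2\pi\gamma$ (in the spirit of Jeffres--Mazzeo--Rubinstein and the authors' earlier work) upgrade these to uniform $\mathcal{C}^{2,\alpha,\gamma}$-bounds near $D$. Letting $\varepsilon\to0$ and then $j\to\infty$ along a subsequence, the limit $\varphi_\gamma(t)$ solves \eqref{CC} smoothly in $(0,T)\times(X\setminus D)$, and since $(1-\gamma)\log(\varepsilon^2+|s|_h^2)\to(1-\gamma)\log|s|_h^2$ in $L^1$ it solves \eqref{CK} globally in the sense of currents; the estimates above verify the metric-equivalence and regularity bullets of Definition \ref{0416002}.

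It remains to attain the initial datum and to prove uniqueness. Since $\varphi_j\searrow\varphi_0$, monotonicity of the flow in the initial data supplies an upper barrier at $t=0$, and the zero Lelong number hypothesis provides a Skoda-type integrability that yields a matching lower barrier, exactly as in the regularizing estimates of Guedj--Zeriahi and Di Nezza--Lu transplanted to the conical setting; combined with the uniform estimates, this forces $\varphi_\gamma(t)\to\varphi_0$ in $L^1(X)$ and locally uniformly on $X\setminus D$, giving \eqref{0507002}. For uniqueness I would use the semigroup property: for $s>0$, $\varphi_\gamma(t+s)$ is a weak solution with bounded conical initial datum $\varphi_\gamma(s)\in\mathcal{C}^{2,\alpha,\gamma}\left(X\right)$, for which uniqueness follows from the parabolic comparison principle for \eqref{CC} with bounded conical data (cf.\ Definition \ref{0221001002}); a stability estimate controlling the dependence on bounded initial data, together with $\varphi_\gamma(s)\to\varphi_0$ as $s\to0^+$, then propagates uniqueness down to $t=0$.

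The main obstacle is making the conical second-order estimate near $D$ genuinely uniform in \emph{both} approximation parameters down to a definite time, and then reconciling the $t=0$ barrier argument with only assuming zero Lelong number of $\varphi_0$ rather than boundedness --- which is precisely where the Guedj--Zeriahi and Di Nezza--Lu regularizing machinery, adapted to the presence of the fixed conical singularity along $D$, carries the weight.
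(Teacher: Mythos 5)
The paper does not actually contain a proof of Theorem \ref{0510001}: it quotes the result from the authors' earlier work \cite{JWLXZ4}, describing the method there as smooth double approximation combined with the Guedj--Zeriahi and Di Nezza--Lu regularizing machinery. Your outline matches that description in its architecture (approximate by \eqref{TK}, derive $\varepsilon,j$-uniform estimates, pass to the limit, then treat $t=0$ and uniqueness), so at the level of strategy you are on the intended track.

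There is, however, a genuine gap at your very first estimate. The theorem assumes only that $\hat\omega$ has zero Lelong number, so $\varphi_0$ need not be bounded and the Demailly approximants $\varphi_j$ admit no uniform $L^\infty$ bound; your inequality $\|\varphi_j\|_{L^\infty}\leqslant\|\varphi_0\|_{L^\infty}$ and the ensuing barriers $\pm(At+B)$ plus multiples of $t\psi_\gamma$ therefore prove the theorem only in the special case $\varphi_0\in L^\infty(X)$. In the zero-Lelong-number case the uniform bound $\|\varphi_{\gamma\varepsilon j}(t)\|_{L^\infty}\leqslant C_\delta$ for $t\geqslant\delta>0$ is itself the main point: one must combine the inequality $\dot\varphi_{\gamma\varepsilon j}(t)\leqslant\big(\varphi_{\gamma\varepsilon j}(t)-\varphi_j\big)/t+n$ with the uniform Skoda--Zeriahi integrability of $e^{-\varphi_j/t}$ (this is exactly where the zero Lelong number hypothesis enters) and Ko{\l}odziej's $L^p$ estimate for the resulting elliptic Monge--Amp\`ere inequality at each time slice, with an exponent $p_\gamma<1/(1-\gamma)$ chosen to absorb the conical density $|s|_h^{-2(1-\gamma)}$. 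You acknowledge this difficulty only in your closing paragraph, but your actual chain of estimates is built on the bounded-potential assumption, so as written the argument does not reach the stated generality. A similar remark applies to uniqueness: the comparison principle for continuous or bounded conical data that you invoke is fine on $[s,T]$ for $s>0$, but propagating it down to $t=0$ when $\varphi_0\notin L^\infty$ requires the $L^1$-convergence $\varphi_\gamma(s)\to\varphi_0$ together with a Hartogs-type semicontinuity argument (as in Di Nezza--Lu, and as the present paper itself does in Proposition \ref{0318001}), not a norm-stability estimate in the initial data.
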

\begin{rem}\label{0502001}
From Definition \ref{0221001002}, if $\phi(t)$ is a weak solution to equation \eqref{CC} with initial value $\phi_0\in L^\infty(X)\cap{\rm PSH}(X,\omega)$ on $(0,T')\times X$, for any $0<\delta<T<T'$, there exists a constant $C$ such that
\begin{equation}\label{031800399}
C^{-1}\omega_\gamma\leqslant\omega_{\gamma t}+\sqrt{-1}\partial\bar{\partial}\phi(t)\leqslant C\omega_\gamma.
\end{equation}
Hence for fix $\gamma\in(0,1)$,
\begin{equation}\label{031800499}
\frac{\partial \phi(t)}{\partial t}=\log\frac{\left(\omega_{\gamma t}+\dd\phi(t)\right)^n}{\omega^n}+h+(1-\gamma)\log|s|_h^2
\end{equation}
is bounded in $[\delta,T]\times (X\setminus D)$, which implies that $\phi(t)$ is H\"older continuous on $X$ for any $t\in[\delta,T]$ by Ko{\l}odziej's $L^p$-estimates and that
\begin{equation}\label{031800599}
\|\phi(t)-\phi(s)\|_{L^\infty(X)}\leqslant C|t-s|\ \ \ \text{for all}\ t,s\in[\delta,T].
\end{equation}
Therefore, for any $t>0$,  $\phi(t)$ is continuous on $X$, and there holds
\begin{equation}\label{0502002}
\lim\limits_{s\to t^+}\|\phi(s)-\phi(t)\|_{L^\infty(X)}=0.
\end{equation}
\end{rem}
If the metric potential of the initial metric is continuous, the only difference is that the solution of \eqref{CC} converges to the initial one in $L^\infty$-sense, which is stronger than the last condition in Definition \ref{0416002}. In \cite{JWLXZ1}, the authors gave the following existence and uniqueness theorem.
\begin{thm}[Theorem $1.2$ and Remark $1.3$ in \cite{JWLXZ19}]\label{0313000}
Assume that $\tilde\omega$ is a closed positive current with continuous metric potential $\tilde\varphi_0$ with respect to $\omega$. Then there exists a unique solution to the twisted conical K\"ahler-Ricci flow 
 \begin{equation}\label{0401001}
\left\{
\begin{aligned}
 &\ \frac{\partial \omega_{\gamma}(t)}{\partial t}=-{\rm Ric}(\omega_{\gamma}(t))+(1-\gamma)[D]+\eta\\
 &\ \ \ \ \ \ \ \ \ \ \ \ \ \ \ \ \ \ \ \ \ \ \ \ \ \ \ \ \ \ \ \ \ \ \ \ \ \ \ \ \ \ \ \ \ \ \ \ \ \ \ \\
 &\ \omega_{\gamma}(t)|_{t=0}=\tilde{\omega}
\end{aligned}
\right.
\end{equation}
on $(0,T^\eta_{\gamma,\max})\times X$ in the following sense
\begin{itemize}
  \item  For any $0<\delta<T<T^\eta_{\gamma,\max}$, there exists a constant $C$ such that for any $t\in(\delta,T)$,
\begin{equation}
C^{-1}\omega_\gamma\leqslant\omega_\gamma(t)\leqslant C\omega_\gamma\ \ \ \text{in}\ \ \ X\setminus D;
\end{equation}
   \item  In $(0,T^\eta_{\gamma,\max})\times(X\setminus D)$, $\omega_\gamma(t)$ satisfies the smooth twisted K\"ahler-Ricci flow
 \begin{equation}\label{031300211}
\frac{\partial \omega_{\gamma}(t)}{\partial t}=-{\rm Ric}(\omega_{\gamma}(t))+\eta;
\end{equation}
  \item On $(0,T^\eta_{\gamma,\max})\times X$, $\omega_\gamma(t)$ satisfies equation \eqref{0401001} in the sense of currents;
  \item There exists a metric potential $\varphi_\gamma(t)\in \mathcal{C}^{2,\alpha,\gamma}\left( X\right)\cap \mathcal{C}^{\infty}\left(X\setminus D\right)$ such that $\omega_\gamma(t)=\omega_{\gamma t}+\sqrt{-1}\partial\bar{\partial}\varphi_\gamma(t)$ and $\lim\limits_{t\rightarrow0^{+}}\|\varphi_\gamma(t)-\tilde\varphi\|_{L^{\infty}(X)}=0$;
  \end{itemize}
\end{thm}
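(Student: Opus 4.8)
The plan is to run the smooth approximation scheme already prepared above. First I would regularize the two singular ingredients of \eqref{0401001}: replace $[D]$ by the smooth forms $\theta_\varepsilon=\theta+\dd\log(\varepsilon^2+|s|_h^2)$ as $\varepsilon\searrow0$, and approximate the continuous potential $\tilde\varphi_0$ by smooth strictly $\omega$-psh functions; since $\tilde\varphi_0$ is continuous one may take $\tilde\varphi_j\searrow\tilde\varphi_0$, and then $\tilde\varphi_j\to\tilde\varphi_0$ uniformly on $X$ by Dini's theorem. For each $(\varepsilon,j)$ the standard parabolic existence theory for the twisted K\"ahler--Ricci flow produces a smooth solution $\varphi_{\gamma\varepsilon j}(t)$ on $[0,T]\times X$, for every $T<T^\eta_{\gamma,\max}$, of the Monge--Amp\`ere flow obtained from \eqref{CC} by replacing $|s|_h^2$ with $\varepsilon^2+|s|_h^2$ and $\varphi_0$ with $\tilde\varphi_j$.

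The heart of the matter is a family of a priori estimates for $\varphi_{\gamma\varepsilon j}$ that are uniform in $\varepsilon$ and $j$ (they may depend on $\gamma$, $T$, and, once restricted to $[\delta,T]$, on $\delta>0$). First a $\mathcal{C}^0$-bound: using \eqref{03200241} and \eqref{0320027}, $\psi_\gamma+Ct+C$ serves as a supersolution and a similar expression as a subsolution of the approximate flow, so the parabolic maximum principle yields $\|\varphi_{\gamma\varepsilon j}(t)\|_{L^\infty(X)}\leqslant C$. Next a bound on $\dot\varphi_{\gamma\varepsilon j}$: differentiating the equation in $t$ and applying the maximum principle to auxiliary quantities such as $t\dot\varphi_{\gamma\varepsilon j}-\varphi_{\gamma\varepsilon j}+nt$ (upper bound) and a function built from $\psi_\gamma$ (lower bound on $[\delta,T]$); this also produces a positive lower bound for $\omega_{\gamma\varepsilon j}(t)^n/\omega^n$ on $[\delta,T]$. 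Then a Laplacian estimate: feeding $\log\tr_{\omega_\gamma}\omega_{\gamma\varepsilon j}(t)-A\varphi_{\gamma\varepsilon j}(t)$ with $A$ large into the Chern--Lu (Aubin--Yau) inequality, and using that Guenancia's reference metric $\omega_\gamma$ of \eqref{0320018} has holomorphic bisectional curvature bounded above, together with the two previous bounds, gives $\tr_{\omega_\gamma}\omega_{\gamma\varepsilon j}(t)\leqslant C$ on $[\delta,T]$; with the determinant lower bound this upgrades to $C^{-1}\omega_\gamma\leqslant\omega_{\gamma\varepsilon j}(t)\leqslant C\omega_\gamma$ there. Finally higher-order estimates: on compact subsets of $X\setminus D$ the flow is uniformly parabolic, so Evans--Krylov and Schauder give local $\mathcal{C}^\infty$-bounds, while near $D$ the conical Schauder theory in the spaces $\mathcal{C}^{2,\alpha,\gamma}$ gives a uniform $\mathcal{C}^{2,\alpha,\gamma}$-bound.

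These estimates let one extract a diagonal subsequence $\varphi_{\gamma\varepsilon j}\to\varphi_\gamma$ converging in $\mathcal{C}^\infty_{loc}\big((0,T^\eta_{\gamma,\max})\times(X\setminus D)\big)$ and, for each fixed $t$, in $\mathcal{C}^{2,\alpha',\gamma}(X)$; then $\omega_\gamma(t)=\omega_{\gamma t}+\dd\varphi_\gamma(t)$ solves the smooth twisted K\"ahler--Ricci flow on $X\setminus D$ and, since $\theta_\varepsilon\to[D]$ as currents, solves \eqref{0401001} on $X$ in the sense of currents, with $C^{-1}\omega_\gamma\leqslant\omega_\gamma(t)\leqslant C\omega_\gamma$ on $[\delta,T]$. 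For the behaviour at $t=0$ I would use barriers: since adding a constant to the initial value translates the solution by the same constant, and $\tilde\varphi_0$ can be squeezed between smooth $\omega$-psh functions $\tilde\varphi_0^{\pm}$ with $\|\tilde\varphi_0^{\pm}-\tilde\varphi_0\|_{L^\infty(X)}$ arbitrarily small, the comparison principle sandwiches $\varphi_\gamma(t)$ between the solutions with initial data $\tilde\varphi_0^{\pm}$, each of which tends uniformly to $\tilde\varphi_0^{\pm}$; hence $\|\varphi_\gamma(t)-\tilde\varphi_0\|_{L^\infty(X)}\to0$ as $t\to0^+$. (Equivalently, Theorem \ref{0510001} already delivers a weak solution in the sense of Definition \ref{0416002}; the only new point for continuous initial data is this $L^\infty$-convergence.)

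For uniqueness, let $\varphi_\gamma^{(1)},\varphi_\gamma^{(2)}$ be two solutions in the stated sense with the same continuous initial potential and set $w=\varphi_\gamma^{(1)}-\varphi_\gamma^{(2)}$. On $(0,T^\eta_{\gamma,\max})\times(X\setminus D)$ one has $\partial_t w=\log\frac{(\omega_{\gamma t}+\dd\varphi_\gamma^{(1)})^n}{(\omega_{\gamma t}+\dd\varphi_\gamma^{(2)})^n}$, which by concavity of $\log\det$ gives a linear parabolic inequality $\partial_t w\leqslant\Delta_{\tilde\omega}w$ for an averaged (hence conical) metric $\tilde\omega$; moreover both solutions tend uniformly to $\tilde\varphi_0$, so $\sup_X|w(t)|\to0$ as $t\to0^+$. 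The maximum principle for equations with conical singularity proved in Section \ref{Preliminaries} then forces $\sup_X w(t)$ — and, by symmetry, $\sup_X(-w(t))$ — to be non-increasing, whence $w\equiv0$. The step I expect to be the main obstacle is the uniform Laplacian estimate up to the divisor: one must control $\tr_{\omega_\gamma}\omega_{\gamma\varepsilon j}(t)$ near $D$ with no singular loss, which is precisely where the special reference metric $\omega_\gamma$ and the sharp relations \eqref{03200241}, \eqref{0320027} enter; the uniform-in-$(\varepsilon,j)$ control near $t=0$ for merely continuous initial data is the second delicate point.
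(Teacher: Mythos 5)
This theorem is not proved in the present paper: it is quoted verbatim from the authors' earlier work (Theorem $1.2$ and Remark $1.3$ in \cite{JWLXZ19}, see also \cite{JWLXZ1}), so there is no in-paper proof to compare against. Your sketch reproduces the standard scheme of that reference for fixed $\gamma$ --- smooth approximation of $[D]$ by $\theta_\ve$ and of $\tilde\varphi_0$ by a Demailly/Dini sequence, $\gamma$-dependent but $(\ve,j)$-uniform $\mathcal{C}^0$, $\dot\varphi$, Chern--Lu Laplacian and Evans--Krylov/conical Schauder estimates built on Guenancia's metric and \eqref{03200241}, barriers for the $L^\infty$-continuity at $t=0$, and the conical maximum principle for uniqueness --- and is correct in outline.
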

\begin{defi}\label{0313007}
Let $u_0\in \mathcal{C}^0(X)$ be a $\omega$-psh function. By saying $u(t)\in \mathcal{C}^{2,\alpha,\gamma}\left( X\right)\cap \mathcal{C}^{\infty}\left(X\setminus D\right)$ is a sub-solution to equation \eqref{CC} with initial value $u_0$ on $(0,T')\times X$, we mean that $u(t)$ satisfies the following conditions.
\begin{itemize}
  \item  For $0<\delta<T<T'$, there exists a constant $C$ such that for any $t\in(\delta,T)$,
\begin{equation}
C^{-1}\omega_\gamma\leqslant\omega_{\gamma t}+\sqrt{-1}\partial\bar{\partial}u(t)\leqslant C\omega_\gamma\ \ \ \text{in}\ \ \ X\setminus D;
\end{equation}
   \item  In $(0,T')\times(X\setminus D)$, $u(t)$ satisfies equation
 \begin{equation}\label{0313008}
\frac{\partial u(t)}{\partial t}\leqslant\log\frac{\left(\omega_{\gamma t}+\dd u(t)\right)^n}{\omega^n}+h+(1-\gamma)\log|s|_h^2;
\end{equation}
  \item $\lim\limits_{t\rightarrow0^{+}}\|u(t)-u_0\|_{L^{\infty}(X)}=0$.
  \end{itemize}
\end{defi}
\begin{defi}\label{0313009}
Let $v_0\in \mathcal{C}^0(X)$ be a $\omega$-psh function. By saying $v(t)\in \mathcal{C}^{2,\alpha,\gamma}\left( X\right)\cap \mathcal{C}^{\infty}\left(X\setminus D\right)$ is a super-solution to equation \eqref{CC} with initial value $v_0$ on $(0,T')\times X$, we mean that $v(t)$ satisfies the following conditions.
\begin{itemize}
  \item  For $0<\delta<T<T'$, there exists a constant $C$ such that for any $t\in(\delta,T)$,
\begin{equation}
C^{-1}\omega_\gamma\leqslant\omega_{\gamma t}+\sqrt{-1}\partial\bar{\partial}v(t)\leqslant C\omega_\gamma\ \ \ \text{in}\ \ \ X\setminus D;
\end{equation}
   \item  In $(0,T')\times(X\setminus D)$, $v(t)$ satisfies equation
 \begin{equation}\label{0313010}
\frac{\partial v(t)}{\partial t}\geqslant\log\frac{\left(\omega_{\gamma t}+\dd v(t)\right)^n}{\omega^n}+h+(1-\gamma)\log|s|_h^2;
\end{equation}
  \item $\lim\limits_{t\rightarrow0^{+}}\|v(t)-v_0\|_{L^{\infty}(X)}=0$.
  \end{itemize}
\end{defi}
\begin{defi}\label{03130090}
Let $\phi_0\in \mathcal{C}^0(X)$ be a $\omega$-psh function. By saying $\phi(t)\in \mathcal{C}^{2,\alpha,\gamma}\left( X\right)\cap \mathcal{C}^{\infty}\left(X\setminus D\right)$ is a solution to equation \eqref{CC} with initial value $\phi_0$ on $(0,T')\times X$, we mean that $\phi(t)$ is both a sub-solution and a super-solution.
\end{defi}
\begin{rem}\label{0515006}
From Definitions \ref{0221001002} and \ref{03130090} and Remark \ref{0502001}, we see that a weak solution $\phi(t)$ to equation \eqref{CC} with initial value $\phi_0\in L^\infty(X)\cap{\rm PSH}(X,\omega)$ on $(0,T')\times X$ is a solution to equation \eqref{CC} with initial value $\phi(t_0)\in\mathcal{C}^0(X)\cap{\rm PSH}(X,\omega_{\gamma t_0})$ on $(t_0,T')\times X$.
\end{rem}
%\begin{rem}\label{0507001}
%If the initial value $u_0$ only belongs to $L^\infty(X)\cap {\rm PSH}(X,\omega)$, we call $u(t)$ a weak sub-solution (or super-solution) to equation \eqref{CC} if it satisfies the first two conditions in Definition \ref{0313007} (or Definition \ref{0313009}) and conditions
% \begin{equation}\label{0507002}
%\lim\limits_{t\rightarrow0^{+}}\|u(t)-u_{0}\|_{L^{1}(M)}=0\ \ \ \text{and}\ \ \ \lim\limits_{t\rightarrow0^{+}}u(t)=u_{0}\ \ \text{in}\ \ X\setminus D.
%\end{equation}
%\end{rem}
Next, we prove some maximum principles, which are important in proving the uniqueness theorem of the limit flow.
\begin{pro}\label{0313003}
If $u(t)$ is a sub-solution to equation \eqref{CC} with initial value $u_0\in \mathcal{C}^0(X)\cap {\rm PSH}(X,\omega)$ on $(0,T')\times X$, and $v(t)$ is a super-solution to equation \eqref{CC} with initial value $v_0\in \mathcal{C}^0(X)\cap {\rm PSH}(X,\omega)$ on $(0,T')\times X$. Then 
\begin{equation}\label{0313006}
u(t)-v(t)\leqslant\sup\limits_X\left(u_0-v_0\right)\ \ \ \text{on}\ \ \ [0,T')\times X.
\end{equation}
\end{pro}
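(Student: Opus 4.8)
The plan is to run a parabolic comparison argument on $w(t) := u(t) - v(t) - \sup_X(u_0 - v_0)$, whose defect is that the admissible class allows conical singularities along $D$ where the usual maximum principle has no grip. First I would record, using the subsolution inequality for $u$ and the supersolution inequality for $v$ together with concavity of $A \mapsto \log\det A$, that on $(0,T')\times(X\setminus D)$ one has
\begin{equation}\label{compineq}
\frac{\partial w}{\partial t} \leqslant \log\frac{(\omega_{\gamma t}+\dd u)^n}{(\omega_{\gamma t}+\dd v)^n} \leqslant \tr_{\omega_{\gamma t}+\dd v}\left(\dd(u-v)\right) = \Delta_{g(t)} w,
\end{equation}
where $g(t)$ is the (smooth on $X\setminus D$) metric $\omega_{\gamma t}+\dd v(t)$; so $w$ is a subsolution of the heat equation associated to a metric which, by the two-sided bound $C^{-1}\omega_\gamma \leqslant \omega_{\gamma t}+\dd v(t) \leqslant C\omega_\gamma$ from Definition \ref{0313009}, is uniformly equivalent to $\omega_\gamma$ on every $[\delta,T]$. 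The point of passing to $\omega_\gamma$ is that $\omega_\gamma$ is complete near $D$ (it is a conical, in fact asymptotically the model \eqref{0220004}, metric), so $X\setminus D$ equipped with $g(t)$ is complete with Ricci bounded below on each time slice.

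The core step is to barrier away the divisor. I would take a cutoff built from $\psi_\gamma$ (or from $-\log|s|_h^2$), say $\chi := -\varepsilon\,\psi_\gamma$ for small $\varepsilon>0$: since $\dd\psi_\gamma \geqslant -\tfrac{1}{8(C-1)}\omega$ by \eqref{0320023} and $\psi_\gamma \to -\infty$ at $D$, the function $w_\varepsilon := w + \chi + \varepsilon t$ tends to $-\infty$ near $D$, is therefore bounded above and attains its supremum over $[\delta,T]\times X$ at an interior point of $(\delta,T]\times(X\setminus D)$ (or on $\{t=\delta\}$). Applying the maximum principle to $w_\varepsilon$ at such a point, \eqref{compineq} plus $\partial_t w_\varepsilon \geqslant \varepsilon > 0$ forces the supremum onto $t=\delta$, giving $\sup_X w(\delta,\cdot) \geqslant \sup_{[\delta,T]\times X} w - \varepsilon\sup_X|\psi_\gamma| - \varepsilon T$; on $X\setminus D$, $\Delta_{g(t)}\chi$ also needs to be controlled, which it is since $\dd\psi_\gamma$ is bounded below by a multiple of $\omega$ and hence of $g(t)$, at the cost of absorbing an $O(\varepsilon)$ term into $\varepsilon t$. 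Letting $\varepsilon \to 0$ yields $\sup_{[\delta,T]\times X} w \leqslant \sup_X w(\delta,\cdot)$, so $t\mapsto \sup_X w(t,\cdot)$ is non-increasing on $(0,T')$.

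Finally I would let $\delta\to 0^+$ and use the initial conditions. By Definitions \ref{0313007}–\ref{0313009}, $\|u(\delta)-u_0\|_{L^\infty}\to 0$ and $\|v(\delta)-v_0\|_{L^\infty}\to 0$, so $\sup_X w(\delta,\cdot) \to \sup_X(u_0-v_0-\sup_X(u_0-v_0)) = 0$; combined with the monotonicity this gives $w(t,\cdot)\leqslant 0$ for all $t\in(0,T')$, i.e. the claimed inequality \eqref{0313006}, and the case $t=0$ is the defining normalization. The main obstacle is the middle step: making the cutoff argument genuinely rigorous requires that the auxiliary barrier $\chi$ both blows down to $-\infty$ at $D$ fast enough to localize the maximum and has $\dd\chi$ (hence $\Delta_{g(t)}\chi$) controlled uniformly in $\gamma$ and $t$ — this is exactly where Guenancia's estimates \eqref{0320023}–\eqref{0320024} on $\psi_\gamma$, and the uniform equivalence \eqref{031800399} of $\omega_{\gamma t}+\dd v(t)$ with $\omega_\gamma$, are indispensable; a secondary subtlety is that $w$ is only $\mathcal C^{2}$ up to $D$ in the $\omega_\gamma$-sense, so the maximum-point computation must be carried out in the interior $X\setminus D$ where $v(t)$ is genuinely smooth, which the barrier guarantees.
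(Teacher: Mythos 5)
Your overall strategy — linearize the difference of the two Monge--Amp\`ere inequalities, add a barrier that blows down to $-\infty$ along $D$ so the maximum is attained in $X\setminus D$, run the parabolic maximum principle on $[\delta,T]$, and finally let $\delta\to0^+$ using the $L^\infty$ convergence to the initial data — is exactly the paper's proof (the paper uses the integral linearization $\hat\Delta=\int_0^1 g_s^{i\bar j}\partial_i\partial_{\bar j}\,ds$ along the segment from $v$ to $u$ rather than your one-sided concavity bound with $g(t)=\omega_{\gamma t}+\dd v(t)$; both are standard and equivalent for this purpose).

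There is, however, one genuine error in your primary choice of barrier. You take $\chi=-\varepsilon\psi_\gamma$ and assert that $\psi_\gamma\to-\infty$ at $D$. That is false for fixed $\gamma>0$: from \eqref{0320019}, $\psi_\gamma=-2\log\bigl(\gamma^{-1}(1-|s|_h^{2\gamma})\bigr)$ tends to the finite value $2\log\gamma$ as $|s|_h\to0$, and by \eqref{0321016} it is squeezed between $\psi_o$ and the bounded function $\psi_1=-\log(1-|s|_h^2)^2$; indeed $\psi_\gamma$ is continuous on all of $X$ (this is used elsewhere in the paper, e.g.\ in Proposition \ref{0321027}). Only the limit $\psi_o=-\log\log^2|s|_h^2$ blows down at $D$. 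Consequently $w_\varepsilon=w-\varepsilon\psi_\gamma+\varepsilon t$ does \emph{not} tend to $-\infty$ near $D$, the supremum may be attained on $D$ where $v(t)$ is not smooth, and the maximum-point computation cannot be carried out there. Your parenthetical alternative is the correct one: take $\chi=\varepsilon\log|s|_h^2$ (equivalently, the paper's $\Psi=u-v+a\log|s|_h^2$), which does tend to $-\infty$ on $D$, and whose Laplacian is controlled because $-\dd\log|s|_h^2=\theta$ off $D$ together with the lower bound $\omega_s\geqslant c\,\omega$ gives $-\hat\Delta\log|s|_h^2=\int_0^1\tr_{\omega_s}\theta\,ds\leqslant C(t_1,T)$, exactly as in \eqref{0314003}. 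With that substitution your argument is complete and coincides with the paper's. A minor side remark: $(X\setminus D,\omega_\gamma)$ is \emph{not} complete (cone metrics of angle $2\pi\gamma<2\pi$ have finite distance to $D$; it is the cusp metric $\omega_o$ that is complete), but since you rely on the barrier rather than on an Omori--Yau type principle, this misstatement does not affect the proof.
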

\begin{proof}
For any $0<t_1<T<T'$ and $a>0$. We consider $\Psi(t)=u(t)-v(t)+a\log|s|_h^2$ on $[t_1,T]\times X$. If we denote
\begin{equation}\label{0314001}
\hat{\Delta}=\int_0^1 g_{s}^{i\bar{j}}\frac{\partial^2}{\partial z^i\partial\bar{z}^j}ds,
\end{equation}
where $g_s$ is the metric corresponding to $\omega_s=\omega_{\gamma t}+s\dd u(t)+(1-s)\dd v(t)$. Then $\Psi(t)$ evolves along the following equation
\begin{equation}\label{0314002}
  \frac{\partial \Psi(t)}{\partial t}\leqslant\hat{\Delta}\Psi(t)-a\hat{\Delta}\log|s|_h^{2}.
\end{equation}
Since $\omega_{u(t)}$ and $\omega_{v(t)}$ are bounded from below by $c_0\omega_\gamma$ and \eqref{0320024}, we obtain
\begin{equation}\label{0314005}
\omega_s\geqslant c\omega
\end{equation}
for some constants $c_0$ and $c$ depending on $t_1$ and $T$.
Combining this with $-\sqrt{-1}\partial\bar{\partial}\log|s|_h^2=\theta$, we conclude that there exists a constant $C(t_1,T)$ depending on $t_1$ and $T$ such that
\begin{equation}\label{0314003}
-\hat{\Delta}\log|s|_h^2=\int_0^1\tr_{\omega_s}\theta ds\leq C(t_1,T)
\end{equation}
in $X\setminus D$. Then 
\begin{equation}\label{0314004}
  \frac{\partial \Psi(t)}{\partial t}\leqslant\hat{\Delta}\Psi(t)+aC(t_1,T).
\end{equation}

Let $\tilde{\Psi}=\Psi-aC(t_1,T)(t-t_1)-\epsilon (t-t_1)$. It is obvious that the space maximum of  $\Psi(t)$ on $[t_1,T]\times X$ is attained away from $D$.  Let $(t_0,x_0)$ be the maximum point. If $t_0>t_1$, by the maximum principle, at $(t_0,x_0)$, we have
\begin{equation}\label{0314006}
 0\leqslant \Big(\frac{\partial }{\partial t}-\hat{\Delta}\Big)\tilde{\Psi}(t)\leqslant -\epsilon,
\end{equation}
which is impossible and so $t_0=t_1$. Then for $(t,x)\in [t_1,T]\times \left(X\setminus D\right)$, we obtain
\begin{equation}\label{0314007}
u(t)-v(t)\leqslant \sup\limits_X\left(u(t_1)-v(t_1)\right)+aTC(t_1,T)+\epsilon T-a\log|s|_h^2.
\end{equation}
We let $\epsilon\rightarrow0$ and then $a\rightarrow0$, there holds
\begin{equation}\label{0314008}
u(t)-v(t)\leqslant \sup\limits_X\left(u(t_1)-v(t_1)\right).
\end{equation}
Then we prove \eqref{0313006} after we let $t_1\rightarrow0^+$ since $\lim\limits_{t\rightarrow0^+}\|u(t)-u_0\|_{L^{\infty}(X)}=0$ and $\lim\limits_{t\rightarrow0^{+}}\|v(t)-v_0\|_{L^{\infty}(X)}=0$
\end{proof}
\begin{pro}\label{0318001}
Assume that $\phi(t)$ is a super-solution to equation \eqref{CC} with initial value $\phi_0\in \mathcal{C}^0(X)\cap {\rm PSH}(X,\omega)$ and that $\psi(t)$ is a weak solution to equation \eqref{CC} with initial value $\psi_0\in L^\infty(X)\cap{\rm PSH}(X,\omega)$ on $(0,T')\times X$. Then 
\begin{equation}\label{0318002}
\psi(t)-\phi(t)\leqslant\sup\limits_X\left(\psi_0-\phi_0\right)\ \ \ \text{on}\ \ \ [0,T')\times X.
\end{equation}
\end{pro}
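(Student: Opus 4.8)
The plan is to compare $\psi$ and $\phi$ first on the shifted intervals $(t_1,T')$ with $t_1>0$, where $\psi$ already has continuous initial data, and then to let $t_1\to0^+$; the passage to the limit is the delicate point, since $\psi_0$ is only $L^\infty$. Fix $0<t_1<T<T'$. By Remark \ref{0502001} (see also Remark \ref{0515006}), $\psi(t)$ is continuous on $X$ for every $t>0$ and $\lim_{s\to t_1^+}\|\psi(s)-\psi(t_1)\|_{L^\infty(X)}=0$, so $\psi$ restricted to $(t_1,T')$ is a sub-solution of \eqref{CC} with continuous initial value $\psi(t_1)$, while $\phi$ still satisfies the super-solution inequality \eqref{0313010} there. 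First I would re-run the maximum principle argument from the proof of Proposition \ref{0313003} with $u=\psi$ and $v=\phi$ on $[t_1,T]\times X$: that argument uses only the lower bounds $\omega_{\gamma t}+\dd\psi(t)\geqslant c_0\omega_\gamma$ and $\omega_{\gamma t}+\dd\phi(t)\geqslant c_0\omega_\gamma$ coming from the sub/super-solution conditions (whence $\omega_s\geqslant c\omega$ as in \eqref{0314005}) together with the time-$t_1$ data $\psi(t_1),\phi(t_1)$, so the precise reference form for which $\psi(t_1)$ is quasi-plurisubharmonic is irrelevant. This gives, after letting $T\uparrow T'$,
\begin{equation*}
\psi(t)-\phi(t)\leqslant\sup_X\big(\psi(t_1)-\phi(t_1)\big)\qquad\text{on}\quad[t_1,T')\times X.
\end{equation*}

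Next I would let $t_1\to0^+$. Put $M:=\sup_X(\psi_0-\phi_0)$, which is finite since $\psi_0\in L^\infty(X)$ and $\phi_0\in\mathcal{C}^0(X)$. Because $\phi$ is a super-solution with continuous initial value $\phi_0$, we have $\|\phi(t_1)-\phi_0\|_{L^\infty(X)}\to0$, so it suffices to show $\limsup_{t_1\to0^+}\sup_X(\psi(t_1)-\phi_0)\leqslant M$. Here the weak initial datum genuinely enters: by Definition \ref{0221001002} we only know $\psi(t_1)\to\psi_0$ in $L^1(X)$ and locally uniformly on $X\setminus D$, so there is no crude uniform bound on $\sup_X(\psi(t_1)-\phi_0)$, and I would supply an upper-semicontinuity (Hartogs) argument. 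For $t_1>0$ the potential $\psi(t_1)$ is $\omega_{\gamma t_1}$-psh, hence $C\omega$-psh by \eqref{0320016}, with $\|\psi(t_1)\|_{L^1(X)}\to\|\psi_0\|_{L^1(X)}<\infty$, so $\sup_X\psi(t_1)$ stays bounded as $t_1\to0^+$. Fix $\varepsilon>0$: the continuous function $M+\phi_0+\varepsilon$ is strictly larger than $\psi_0$ on all of $X$ (since $\psi_0\leqslant M+\phi_0$ pointwise), so Hartogs' lemma for quasi-plurisubharmonic functions, applied to the family $\{\psi(t_1)\}$ converging to $\psi_0$ in $L^1(X)$, yields $\tau>0$ with $\psi(t_1)<M+\phi_0+\varepsilon$ on $X$ for all $0<t_1<\tau$; hence $\sup_X(\psi(t_1)-\phi_0)\leqslant M+\varepsilon$ for such $t_1$, and letting $\varepsilon\to0$ gives the claim.

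Combining the two steps, for every $t\in(0,T')$ and every $t_1\in(0,t)$ one has $\psi(t)-\phi(t)\leqslant\sup_X(\psi(t_1)-\phi(t_1))$, and taking $t_1\to0^+$ gives $\psi(t)-\phi(t)\leqslant M$; together with the trivial inequality at $t=0$ this proves $\psi(t)-\phi(t)\leqslant\sup_X(\psi_0-\phi_0)$ on $[0,T')\times X$. I expect the Hartogs step to be the main obstacle: one must convert the $L^1$ (and interior locally uniform) convergence of the weak solution near $t=0$ into a genuine one-sided uniform bound against the continuous obstacle $M+\phi_0$, which is possible precisely because the potentials $\psi(t_1)$ are uniformly quasi-plurisubharmonic with uniformly bounded mass. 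An alternative route avoiding Hartogs would be to realize $\psi$ as the decreasing limit of the solutions of \eqref{CC} with smooth initial data $\varphi_j\searrow\psi_0$, apply Proposition \ref{0313003} to each pair $(\varphi_j$-solution$,\phi)$, and pass to the limit using $\sup_X(\varphi_j-\phi_0)\searrow M$.
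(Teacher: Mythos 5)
Your proposal is correct and follows essentially the same route as the paper: restrict to $[t_1,T')$ where $\psi(t_1)$ is a continuous initial datum (Remark \ref{0515006}), apply Proposition \ref{0313003}, and then let $t_1\to0^+$ using the $L^1$-convergence of $\psi(t_1)$, the $L^\infty$-convergence of $\phi(t_1)$, and Hartogs' lemma. The only difference is that you spell out the Hartogs step in detail, whereas the paper invokes it in one line.
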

\begin{proof}
For any $0<t_1<T<T'$, since $\psi(t)$ is a weak solution to equation \eqref{CC}, by Remark \ref{0515006}, $\psi(t)$ is a solution to equation \eqref{CC} with initial value $\psi(t_1)$. By using Proposition \ref{0313003}, on $[t_1,T')\times X$, we have
\begin{equation}\label{0418006}
\psi(t)-\phi(t)\leqslant\sup\limits_X\left(\psi(t_1)-\phi(t_1)\right).
\end{equation}
Since $\psi(t_1)$ converges to $\psi_0$ in $L^1$-sense and $\phi(t_1)$ converges to $\phi_0$ in $L^\infty$-sense, it then follows form Hartogs' Lemma that 
\begin{equation}\label{0418006}
\psi(t)-\phi(t)\leqslant\sup\limits_X\left(\psi_0-\phi_0\right)
\end{equation}
on $[0,T')\times X$ after we let $t_1\to0$.
\end{proof}

\section{Uniform estimates on metric potentials}\label{Bou1}

In this section, we prove some uniform estimates on potential $\varphi_\gamma(t)$ and its time derivative $\dot{\varphi}_\gamma(t)$.

For any $0<T<T^\eta_{\max}$, $T<T^\eta_{\gamma,\max}$ also holds for $0<\gamma\ll1$ sufficiently small. If $T>1$, we further assume that $\delta$ in \eqref{0320022} is sufficiently small such that
\begin{equation}\label{0320028}
\dd\psi_\gamma\geqslant-\frac{1}{8T\left(C-1\right)}\omega>-\frac{1}{8\left(C-1\right)}\omega,
\end{equation}
where $C$ comes from \eqref{0320016}. Combining \eqref{0320023} with \eqref{0320024}, we also have
\begin{equation}\label{03200281}
\omega_\gamma=\omega+\dd\psi_\gamma\geqslant\frac{1}{2}\omega.
\end{equation}

Due to the singularity of the equation \eqref{CC}, we are unable to derive an effective uniform estimate on $\varphi_\gamma(t)$ by using the maximum principle. But fortunately, by constructing new auxiliary functions and introducing new arguments, we can prove a uniform estimate on $\varphi_\gamma(t)-t\psi_\gamma$ and its time derivative, which enables us to derive the high order uniform estimates along the twisted conical K\"ahler-Ricci flow. Now we first construct a super-solution to derive a uniform upper bound. 
\begin{pro}\label{0320029}
There exists a uniform constant $C$ depending only on $\omega$, $\theta$, $\eta$ and $T$ such that
\begin{equation}\label{0320030}
\varphi_\gamma(t)\leqslant\sup\limits_X\varphi_0+t\psi_\gamma+Ct\ \ \ \text{on}\ \ \ X
\end{equation}
for all $t\in[0,T]$ and $0<\gamma\ll1$.
\end{pro}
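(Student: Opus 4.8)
The plan is to exhibit an explicit super-solution to equation \eqref{CC} of the form $\bar\varphi(t) = \sup_X\varphi_0 + t\psi_\gamma + Ct$ and then apply the comparison principle of Proposition \ref{0313003} (or rather its variant Proposition \ref{0318001}, since $\varphi_\gamma(t)$ is only a weak solution with bounded initial potential). First I would verify that $\bar\varphi(t)$ has the right regularity: $\psi_\gamma$ is smooth away from $D$, belongs to $\mathcal{C}^{2,\alpha,\gamma}(X)$ by Guenancia's construction, and $\omega_{\gamma t} + \dd\bar\varphi(t) = \omega_{\gamma t} + t\,\dd\psi_\gamma$, which by \eqref{0320027} satisfies $C^{-1}\omega_\gamma \leqslant \omega_{\gamma t} + t\,\dd\psi_\gamma \leqslant C\omega_\gamma$ on $X\setminus D$ for $t\in[0,T]$ (using $t\leqslant T$ and absorbing the constant). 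Also $\bar\varphi(t) \to \sup_X\varphi_0 \geqslant \varphi_0$ uniformly as $t\to 0^+$ since $\psi_\gamma$ is bounded above on $X$ by \eqref{0321014}, so the initial condition for a super-solution with initial value $\sup_X\varphi_0$ holds.

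The heart of the matter is the differential inequality \eqref{0313010}. We compute
\begin{equation}\label{prop-key}
\frac{\partial \bar\varphi(t)}{\partial t} - \log\frac{(\omega_{\gamma t}+\dd\bar\varphi(t))^n}{\omega^n} - h - (1-\gamma)\log|s|_h^2 = \psi_\gamma + C - \log\frac{(\omega_{\gamma t}+t\,\dd\psi_\gamma)^n}{\omega^n} - h - (1-\gamma)\log|s|_h^2,
\end{equation}
and we must show the right-hand side is $\geqslant 0$ for a suitable uniform $C$. The key input is the estimate \eqref{03200241}, which rewrites as $\log\frac{\omega_\gamma^n}{\omega^n} + (1-\gamma)\log|s|_h^2 = \psi_\gamma + O(1)$ with the $O(1)$ uniform in $\gamma$. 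Combining this with the two-sided bound \eqref{0320027} comparing $\omega_{\gamma t}+t\,\dd\psi_\gamma$ to $\omega_\gamma$ — which gives $\big|\log\frac{(\omega_{\gamma t}+t\,\dd\psi_\gamma)^n}{\omega_\gamma^n}\big| \leqslant n\log C$ uniformly — and with the boundedness of $h$ on $X$, the right-hand side of \eqref{prop-key} becomes $\psi_\gamma + C - \psi_\gamma - O(1) = C - O(1)$, which is $\geqslant 0$ once $C$ is chosen larger than the (uniform, $\gamma$-independent) constant collecting all the $O(1)$ terms; note every constant here depends only on $\omega$, $\theta$, $\eta$ and $T$, as required. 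One caveat worth checking: in \eqref{0320027} the relevant object is $\omega_{\gamma t}+\dd\psi_\gamma$, whereas here we have $\omega_{\gamma t}+t\,\dd\psi_\gamma$; for $t\in[0,1]$ this is a convex combination $\omega_{\gamma t} + t(\omega_\gamma - \omega)$ of $\omega_{\gamma t}$ and $\omega_{\gamma t}+\dd\psi_\gamma$, hence still pinched between uniform multiples of $\omega_\gamma$, while for $t\in(1,T]$ one uses the strengthened choice \eqref{0320028} of $\delta$ to absorb the factor $t\leqslant T$; either way the bound is uniform.

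With the super-solution in hand, I would conclude as follows: $\bar\varphi(t)$ is a super-solution to \eqref{CC} with initial value $\sup_X\varphi_0$, and $\varphi_\gamma(t)$ is a weak solution with initial value $\varphi_0 \in L^\infty(X)\cap\mathrm{PSH}(X,\omega)$ by Theorem \ref{0510001}, so Proposition \ref{0318001} yields $\varphi_\gamma(t) - \bar\varphi(t) \leqslant \sup_X(\varphi_0 - \sup_X\varphi_0) = 0$ on $[0,T)\times X$, which is exactly \eqref{0320030}. Since Proposition \ref{0318001} is stated for super-solutions with continuous initial data and weak solutions with $L^\infty$ initial data, this application is legitimate. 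The main obstacle I anticipate is purely bookkeeping: making sure the constant $C$ is genuinely independent of $\gamma$, which hinges entirely on the uniformity in \eqref{03200241} and \eqref{0320027}, both already established via Guenancia's arguments and the authors' observation in \cite{JWLXZ19}; no new hard estimate is needed, only careful tracking of which inequalities are $\gamma$-uniform.
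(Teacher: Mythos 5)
Your proposal is correct and follows essentially the same route as the paper: the authors define the identical super-solution $\psi_\gamma(t)=\sup_X\varphi_0+t\psi_\gamma+Ct$, verify the super-solution inequality by combining the upper bound $\omega_{\gamma t}+t\,\dd\psi_\gamma\leqslant C\omega_\gamma$ (via the same convex-combination trick) with the uniform estimate \eqref{03200241}, and conclude with the comparison principle of Proposition \ref{0318001}. The only cosmetic difference is that you track the two-sided pinching against $\omega_\gamma$ slightly more carefully than the paper does, which is harmless.
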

\begin{proof}
Denote
\begin{equation}\label{0320031}
\psi_\gamma(t)=\sup\limits_X\varphi_0+t\psi_\gamma+Ct.
\end{equation}
By using \eqref{0320016} and \eqref{0320028}, for any $t\in[0,T]$, we have
\begin{equation}\label{0320032}
\omega_{\gamma t}+\dd\psi_\gamma(t)=\omega_{\gamma t}+t\dd\psi_\gamma\geqslant\frac{1}{C}\omega-\frac{t}{8T\left(C-1\right)}\omega\geqslant\left(\frac{1}{C}-\frac{1}{8\left(C-1\right)}\right)\omega>0,
\end{equation}
which implies that $\psi_\gamma(t)$ is a continuous $\omega_{\gamma t}$-psh function.

On the other hand, from \eqref{0320016}, \eqref{0320025} and \eqref{03200281}, there exists a uniform constant $C$ depending only on $\omega$, $\theta$, $\eta$ and $T$ such that
\begin{equation}\label{0320033}
\omega_{\gamma t}+\dd\psi_\gamma(t)=t\left(\omega_{\gamma t}+\dd\psi_{\gamma}\right)+\left(1-t\right)\omega_{\gamma t}\leqslant C\omega_\gamma.
\end{equation}
Hence 
\begin{equation}\label{0320034}
\begin{split}
&\ \ \ \log\frac{\left(\omega_{\gamma t}+\dd\psi_{\gamma}(t)\right)^n}{\omega^n}+h+(1-\gamma)\log|s|_h^2\\
&\leqslant\log C+\log\frac{\omega_\gamma^n}{\omega^n}+h+(1-\gamma)\log|s|_h^2\\
&\leqslant\log C+\log\frac{\omega_\gamma^n}{\omega^n}+h+(1-\gamma)\log|s|_h^2-\psi_\gamma+\psi_\gamma\\
&\leqslant C+\psi_\gamma=\frac{\partial\psi_\gamma(t)}{\partial t},
\end{split}
\end{equation}
where we use \eqref{03200241} in the last inequality. Combining the fact that 
\begin{equation}\label{0320034}
\lim\limits_{t\to0^+}\|\psi_\gamma(t)-\sup\limits_X\varphi_0\|_{L^\infty(X)}=0,
\end{equation}
we show that $\psi_\gamma(t)$ is a super-solution to equation \label{0320032} with initial value $\sup\limits_X\varphi_0$. By the maximum principle Proposition \ref{0318001}, we conclude that 
\begin{equation}\label{0320035}
\varphi_\gamma(t)\leqslant\psi_\gamma(t)=\sup\limits_X\varphi_0+t\psi_\gamma+Ct\ \ \ \text{on}\ \ \ X
\end{equation}
for all $t\in[0,T]$ and $0<\gamma\ll1$.
\end{proof}
\begin{pro}\label{0321001}
For any $t\in(0,T]$ and $0<\gamma\ll1$, there holds
\begin{equation}\label{0321002}
\dot\varphi_\gamma(t)\leqslant\frac{\varphi_\gamma(t)-\varphi_0}{t}+n\ \ \ \text{in}\ \ \ X\setminus D.
\end{equation}
Furthermore, there exists a uniform constant $C$ depending only on $n$, $\omega$, $\theta$, $\eta$ and $T$ such that
\begin{equation}\label{0321003}
\dot\varphi_\gamma(t)\leqslant\frac{\osc\varphi_0}{t}+\psi_\gamma+C\ \ \ \text{in}\ \ \ X\setminus D
\end{equation}
for all $t\in[0,T]$ and $0<\gamma\ll1$.
\end{pro}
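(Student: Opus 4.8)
The plan is to differentiate the flow equation in time and apply the parabolic maximum principle to a suitable auxiliary function, working first on the smooth approximating equation \eqref{TC} and then passing to the limit; once \eqref{0321002} is in hand, the estimate \eqref{0321003} follows immediately from Proposition \ref{0320029}. Concretely, I would start from the approximating equation \eqref{TC}. Writing $\omega_{\gamma\ve j}(t)=\omega_{\gamma t}+\dd\varphi_{\gamma\ve j}(t)$ and using $\frac{\partial}{\partial t}\omega_{\gamma t}=\nu_\gamma$ from \eqref{0320017}, and observing that the term $h+(1-\gamma)\log(\ve^{2}+|s|_{h}^{2})$ in \eqref{TC} does not depend on $t$ (which is exactly why the $\ve$-regularization of the current $[D]$ is harmless for this estimate), differentiation of \eqref{TC} in $t$ gives $\dl\dot\varphi_{\gamma\ve j}(t)=\tr_{\omega_{\gamma\ve j}(t)}\nu_\gamma$. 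I would then consider
\[
Q_{\gamma\ve j}(t)=t\,\dot\varphi_{\gamma\ve j}(t)-\varphi_{\gamma\ve j}(t)+\varphi_{j}.
\]
Combining the identity above with $\Delta_{\omega_{\gamma\ve j}(t)}\varphi_{\gamma\ve j}(t)=n-\tr_{\omega_{\gamma\ve j}(t)}\omega_{\gamma t}$ and with $t\nu_\gamma=\omega_{\gamma t}-\omega$ (again from \eqref{0320017}), a short computation should yield $\dl\bigl(Q_{\gamma\ve j}(t)-nt\bigr)=-\tr_{\omega_{\gamma\ve j}(t)}\omega\leqslant0$ on $[0,T]\times X$, where the flow \eqref{TC} is genuinely smooth. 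Since $Q_{\gamma\ve j}(0)\equiv0$, the maximum principle gives $Q_{\gamma\ve j}(t)\leqslant nt$, i.e. $\dot\varphi_{\gamma\ve j}(t)\leqslant\frac{\varphi_{\gamma\ve j}(t)-\varphi_{j}}{t}+n$ on $[0,T]\times X$. Letting $\ve\to0$ and then $j\to\infty$, and invoking the $\mathcal{C}^{\infty}_{loc}$-convergence of $\varphi_{\gamma\ve j}(t)$, hence of $\dot\varphi_{\gamma\ve j}(t)$, to $\varphi_\gamma(t)$ on $(0,T]\times(X\setminus D)$ (from the construction underlying Theorem \ref{0510001}; see also \cite{JWLXZ4}) together with $\varphi_{j}\searrow\varphi_{0}$, one obtains \eqref{0321002}.

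For the second inequality I would simply feed the upper bound \eqref{0320030} of Proposition \ref{0320029} into \eqref{0321002}: from $\varphi_\gamma(t)\leqslant\sup\limits_X\varphi_0+t\psi_\gamma+Ct$ we get $\varphi_\gamma(t)-\varphi_0\leqslant\osc\varphi_0+t\psi_\gamma+Ct$ in $X\setminus D$, hence
\[
\dot\varphi_\gamma(t)\leqslant\frac{\varphi_\gamma(t)-\varphi_0}{t}+n\leqslant\frac{\osc\varphi_0}{t}+\psi_\gamma+C+n,
\]
and absorbing $n$ into $C$ gives \eqref{0321003}; the constant stays independent of $\gamma$ precisely because the constant in Proposition \ref{0320029} does.

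The only genuinely delicate point is the passage to the limit: one must be sure that the double approximation scheme of \cite{JWLXZ4} converges in $\mathcal{C}^{\infty}_{loc}\bigl((0,T]\times(X\setminus D)\bigr)$, so that the pointwise inequality survives both limits $\ve\to0$ and $j\to\infty$. On the smooth approximating flow itself the differentiation and the maximum principle are entirely routine, and no difficulty occurs at the initial time since $Q_{\gamma\ve j}$ vanishes identically there; near $D$ the estimate \eqref{0321002} is consistent because $\dot\varphi_\gamma(t)\to-\infty$ along $D$ for $t>0$, while its right-hand side is finite on $X\setminus D$.
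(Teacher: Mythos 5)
Your proposal is correct and follows essentially the same route as the paper: the paper applies the maximum principle to $H_{\gamma\ve j}(t)=t\dot\varphi_{\gamma\ve j}(t)-\left(\varphi_{\gamma\ve j}(t)-\varphi_j\right)-nt$, which is exactly your $Q_{\gamma\ve j}(t)-nt$, and then deduces \eqref{0321003} from \eqref{0321002} and Proposition \ref{0320029} just as you do. The only minor slip is that the computation actually gives $\dl\left(Q_{\gamma\ve j}(t)-nt\right)=-\tr_{\omega_{\gamma\ve j}(t)}\omega_j$ rather than $-\tr_{\omega_{\gamma\ve j}(t)}\omega$ (the $\Delta_{\omega_{\gamma\ve j}(t)}\varphi_j$ term combines with $-\tr_{\omega_{\gamma\ve j}(t)}\omega$ to produce $-\tr_{\omega_{\gamma\ve j}(t)}\omega_j$), but since $\omega_j=\omega+\dd\varphi_j>0$ this is still nonpositive and the argument is unaffected.
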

\begin{proof}\label{0321004}
We consider the smooth complex Monge-Amp\`ere equation \eqref{TC}. Let
\begin{equation}\label{0321005}
H_{\gamma \ve j}(t)=t\dot\varphi_{\gamma \ve j}(t)-\left(\varphi_{\gamma\ve j }(t)-\varphi_j\right)-nt.
\end{equation}
Direct computations show that
\begin{equation}\label{0321006}
\frac{\partial}{\partial t} \dot{\varphi}_{\gamma\ve j}(t)=\tr_{\omega_{\gamma\ve j}(t)}\left(v_{\gamma}+\dd\dot{\varphi}_{\gamma\ve j}(t)\right),
\end{equation}
and hence
\begin{equation}\label{0321007}
\frac{\partial H_{\gamma \ve j}}{\partial t}=t\ddot{\varphi}_{\gamma\ve j}(t)-n=t\left(\tr_{\omega_{\gamma\ve j}(t)}\nu_\gamma+\Delta_{\omega_{\gamma\ve j}(t)}\dot{\varphi}_{\gamma\ve j}(t)\right)-n.
\end{equation}
On the other hand, we have
\begin{equation}\label{0321008}
\begin{aligned}
\Delta_{\omega_{\gamma\ve j}(t)} H_{\gamma \ve j}&=t\Delta_{\omega_{\gamma\ve j}(t)} \dot{\varphi}_{\gamma\ve j}(t)-\Delta_{\omega_{\gamma\ve j}(t)} \varphi_{\gamma\ve j}(t)+\Delta_{\omega_{\gamma\ve j}(t)} \varphi_{j}\\
&=t\Delta_{\omega_{\gamma\ve j}(t)} \dot{\varphi}_{\gamma\ve j}(t)-n+\tr_{\omega_{\gamma\ve j}(t)}\omega_{\gamma t}+\Delta_{\omega_{\gamma\ve j}(t)} \varphi_{j}\\
&=t\left(\tr_{\omega_{\gamma\ve j}(t)}\nu_\gamma+\Delta_{\omega_{\gamma\ve j}(t)}\dot{\varphi}_{\gamma\ve j}(t)\right)-n+\tr_{\omega_{\gamma\ve j}(t)}\omega_j.
\end{aligned}
\end{equation}
Then
\begin{equation}\label{0321009}
\dl H_{\gamma\ve j}=-\tr_{\omega_{\gamma\ve j}(t)}\omega_j\leqslant0.
\end{equation}
By the maximum principle, we have $H_{\gamma \ve j}(t)\leqslant \sup\limits_XH_{\gamma \ve j}(0)=0$. Therefore,
\begin{equation}\label{0321010}
\dot{\varphi}_{\gamma\ve j}(t)\leqslant \frac{\varphi_{\gamma\ve j}(t)-\varphi_{j}}{t}+n
\end{equation}
for all $j\geqslant1$, $\ve\in(0,1)$ and $t\in(0,T]$. Letting $j\to\infty$ and then $\ve\to0$, we obtain
\begin{equation}\label{0321011}
\dot{\varphi}_{\gamma}(t)\leqslant \frac{\varphi_{\gamma}(t)-\varphi_{0}}{t}+n\ \ \ \text{in}\ \ \ X\setminus D.
\end{equation}
Inequality \eqref{0321003} follows from \eqref{0321011} and Proposition \ref{0320029}.
\end{proof}
\begin{cor}\label{0405001}
For any $0<T<T^\eta_{\max}$, there exists a uniform constant $C$ depending only on $n$, $\omega$, $\theta$, $\eta$ and $T$ such that for any $t\in[0,T]$ and $0<\gamma\ll1$,
\begin{equation}\label{0405002}
t\left(\dot\varphi_\gamma(t)-\psi_\gamma\right)\leqslant\osc\limits_X\varphi_0+Ct\ \ \ \text{in}\ \ \ X\setminus D.
\end{equation}
\end{cor}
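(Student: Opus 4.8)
The plan is to combine the two estimates from Proposition \ref{0321001} with the upper bound from Proposition \ref{0320029}, exactly as the corollary's placement in the text suggests. First I would start from inequality \eqref{0321002}, namely $\dot\varphi_\gamma(t)\leqslant\frac{\varphi_\gamma(t)-\varphi_0}{t}+n$ in $X\setminus D$, which holds for all $t\in(0,T]$ and $0<\gamma\ll1$. Multiplying through by $t>0$ gives
\begin{equation}
t\dot\varphi_\gamma(t)\leqslant\varphi_\gamma(t)-\varphi_0+nt\ \ \ \text{in}\ \ \ X\setminus D.
\end{equation}

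Next I would insert the super-solution bound \eqref{0320030} from Proposition \ref{0320029}, which says $\varphi_\gamma(t)\leqslant\sup_X\varphi_0+t\psi_\gamma+Ct$ on $X$ with $C$ depending only on $\omega$, $\theta$, $\eta$ and $T$. Substituting this into the previous display yields
\begin{equation}
t\dot\varphi_\gamma(t)\leqslant\sup\limits_X\varphi_0+t\psi_\gamma+Ct-\varphi_0+nt\ \ \ \text{in}\ \ \ X\setminus D.
\end{equation}
Since $\varphi_0\in L^\infty(X)$, we have $\sup_X\varphi_0-\varphi_0\leqslant\sup_X\varphi_0-\inf_X\varphi_0=\osc_X\varphi_0$ pointwise, so rearranging and absorbing $n$ into the constant $C$ gives
\begin{equation}
t\left(\dot\varphi_\gamma(t)-\psi_\gamma\right)\leqslant\osc\limits_X\varphi_0+Ct\ \ \ \text{in}\ \ \ X\setminus D,
\end{equation}
which is \eqref{0405002}. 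The case $t=0$ is trivial since both sides vanish (interpreting the left side as its limit), and the constant $C$ inherits its dependence only on $n$, $\omega$, $\theta$, $\eta$ and $T$ as claimed.

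There is essentially no obstacle here: the corollary is a direct algebraic consequence of the two preceding propositions, and the only mild points to be careful about are that \eqref{0321002} is an inequality valid only away from $D$ (so the conclusion is stated in $X\setminus D$), that the bound \eqref{0320030} holds on all of $X$ (in particular on $X\setminus D$, which is all we need), and that the uniformity of $C$ in $\gamma$ is already guaranteed by Propositions \ref{0320029} and \ref{0321001}. The main content has already been done in establishing those two propositions; this corollary simply repackages them into the form $t(\dot\varphi_\gamma(t)-\psi_\gamma)\leqslant\osc_X\varphi_0+Ct$, which is the shape needed later for the short-time lower bound on the potential and its time derivative.
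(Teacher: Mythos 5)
Your proposal is correct and follows essentially the same route as the paper: the corollary is just inequality \eqref{0321003} of Proposition \ref{0321001} multiplied by $t$, and that inequality is itself obtained exactly as you do, by combining \eqref{0321002} with the upper bound of Proposition \ref{0320029} and the estimate $\sup_X\varphi_0-\varphi_0\leqslant\osc_X\varphi_0$.
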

\begin{cor}\label{0322011}
For any $0<\delta<T<T^\eta_{\max}$, there exists a uniform constant $C$ depending only on $\|\varphi_0\|_{L^\infty(X)}$, $n$, $\omega$, $\theta$, $\eta$, $\delta$ and $T$ such that for any $t\in[\delta,T]$ and $0<\gamma\ll1$,
\begin{equation}\label{0322012}
\dot\varphi_\gamma(t)\leqslant\psi_\gamma+C\ \ \ \text{in}\ \ \ X\setminus D.
\end{equation}
\end{cor}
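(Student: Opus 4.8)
\textbf{Proof proposal for Corollary \ref{0322011}.} The plan is to simply restrict the already-established inequality \eqref{0321003} of Proposition \ref{0321001} to the interval $[\delta,T]$ and absorb the time-dependent term into a uniform constant. Concretely, Proposition \ref{0321001} gives, for all $t\in(0,T]$ and $0<\gamma\ll1$,
\begin{equation*}
\dot\varphi_\gamma(t)\leqslant\frac{\osc\limits_X\varphi_0}{t}+\psi_\gamma+C_1\ \ \ \text{in}\ \ \ X\setminus D,
\end{equation*}
where $C_1$ depends only on $n$, $\omega$, $\theta$, $\eta$ and $T$. For $t\in[\delta,T]$ we bound $\frac{1}{t}\leqslant\frac{1}{\delta}$, and since $\osc_X\varphi_0\leqslant 2\|\varphi_0\|_{L^\infty(X)}$, the first term is controlled by $\frac{2\|\varphi_0\|_{L^\infty(X)}}{\delta}$, a constant of the allowed type. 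Setting $C=C_1+\frac{2\|\varphi_0\|_{L^\infty(X)}}{\delta}$ yields the claim.

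Alternatively, one can argue directly from the two building blocks without invoking \eqref{0321003}: combine the pointwise bound $\dot\varphi_\gamma(t)\leqslant\frac{\varphi_\gamma(t)-\varphi_0}{t}+n$ from \eqref{0321002} with the upper bound $\varphi_\gamma(t)\leqslant\sup_X\varphi_0+t\psi_\gamma+C_2 t$ from Proposition \ref{0320029}. This gives
\begin{equation*}
\dot\varphi_\gamma(t)\leqslant\frac{\sup_X\varphi_0-\varphi_0}{t}+\psi_\gamma+C_2+n\leqslant\frac{\osc_X\varphi_0}{t}+\psi_\gamma+C_2+n\ \ \ \text{in}\ \ \ X\setminus D,
\end{equation*}
and again restricting to $t\geqslant\delta$ absorbs the first term into a constant depending on $\|\varphi_0\|_{L^\infty(X)}$ and $\delta$. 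Either route is essentially immediate.

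There is no genuine obstacle here: all the substantive work — the barrier construction of Proposition \ref{0320029} and the maximum-principle argument for $H_{\gamma\varepsilon j}$ in Proposition \ref{0321001}, together with the $\gamma$-independence of the constants guaranteed by \eqref{03200241}, \eqref{0320016} and \eqref{0321014} — has already been carried out. The only point to remark on is that the bound degenerates as $\delta\to0^+$, which is expected since near the initial time $\dot\varphi_\gamma(t)-\psi_\gamma$ can only be controlled after multiplication by $t$, as recorded in Corollary \ref{0405001}; this is precisely why the hypothesis $t\geqslant\delta>0$ is needed to obtain a bound on $\dot\varphi_\gamma(t)-\psi_\gamma$ itself.
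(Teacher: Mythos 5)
Your proposal is correct and is exactly the argument the paper intends: the corollary is stated without proof because it follows immediately from \eqref{0321003} by bounding $\frac{\osc_X\varphi_0}{t}\leqslant\frac{2\|\varphi_0\|_{L^\infty(X)}}{\delta}$ for $t\geqslant\delta$. Your remark about why the bound must degenerate as $\delta\to0^+$ is also consistent with Corollary \ref{0405001}.
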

Then by using Propositions \ref{0320029} and \ref{0321001}, we give some new arguments to prove a uniform lower bound on the potential in a short time.
\begin{pro}\label{0321017}
There exists a uniform constant $C$ depending only on $n$, $\omega$, $\theta$, $\eta$ and $T$ such that
\begin{equation}\label{0321018}
\varphi_\gamma(t)\geqslant \inf\limits_X\varphi_0+t\psi_\gamma-C\ \ \ \text{on}\ \ \ X
\end{equation}
for all $t\in[0,1]$ and $0<\gamma\ll1$.
\end{pro}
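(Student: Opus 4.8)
The plan is to establish the lower bound \eqref{0321018} by a barrier argument that exploits the upper bound just proved in Proposition \ref{0320029} together with the upper bound on $\dot\varphi_\gamma(t)$ from Proposition \ref{0321001}. The natural candidate for a lower barrier is a function of the form $\underline\psi_\gamma(t)=\inf_X\varphi_0+t\psi_\gamma-Ct$ (or, to get the desired form with $-C$ instead of $-Ct$, one first works on $[0,1]$ so $-Ct$ and $-C$ are comparable). First I would check that $\underline\psi_\gamma(t)$ is an $\omega_{\gamma t}$-psh function on $[0,1]$: this follows exactly as in \eqref{0320032}, using \eqref{0320016} and \eqref{0320028} to see that $\omega_{\gamma t}+t\dd\psi_\gamma\geqslant\bigl(\tfrac1C-\tfrac1{8(C-1)}\bigr)\omega>0$. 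Then I would verify the sub-solution inequality: since $\omega_{\gamma t}+\dd\underline\psi_\gamma(t)=t(\omega_{\gamma t}+\dd\psi_\gamma)+(1-t)\omega_{\gamma t}$, I would bound this from below by a uniform multiple of $\omega$ using \eqref{0320016} and \eqref{0320026}, take $\log$ of the determinant ratio, add $h+(1-\gamma)\log|s|_h^2$, and absorb the resulting terms using \eqref{03200241} to compare against $\partial_t\underline\psi_\gamma(t)=\psi_\gamma-C$. The point is that $\log\frac{(\omega_{\gamma t}+\dd\underline\psi_\gamma(t))^n}{\omega^n}+h+(1-\gamma)\log|s|_h^2\geqslant -C'+\log\frac{\omega_\gamma^n}{\omega^n}+h+(1-\gamma)\log|s|_h^2\geqslant -C''+\psi_\gamma$ by \eqref{03200241}, which is $\geqslant\partial_t\underline\psi_\gamma(t)$ for $C$ chosen large enough; this makes $\underline\psi_\gamma(t)$ a genuine sub-solution to \eqref{CC} in the sense of Definition \ref{0313007}, with initial value $\inf_X\varphi_0$.

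The subtlety is that $\varphi_\gamma(t)$ is a \emph{weak} solution in the sense of Definition \ref{0416002}, so its initial value is only attained in the $L^1$-sense and in $\mathcal C^\infty_{loc}(X\setminus D)$, not in $L^\infty$; hence Proposition \ref{0313003} does not apply verbatim to compare $\underline\psi_\gamma(t)$ with $\varphi_\gamma(t)$. To handle this I would instead run the comparison against the smooth approximating flow \eqref{TC}: replacing $\varphi_0$ by $\varphi_j$ and $[D]$ by $\theta_\ve$, I would show that a suitably corrected barrier $\inf_X\varphi_j+t\psi_\gamma-Ct$ (with $C$ uniform in $\ve,j$, using that $\log(\ve^2+|s|_h^2)\geqslant\log|s|_h^2$) is a genuine smooth sub-solution to \eqref{TC}, so by the ordinary parabolic maximum principle $\varphi_{\gamma\ve j}(t)\geqslant\inf_X\varphi_j+t\psi_\gamma-Ct$ on $[0,1]\times X$. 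Letting $j\to\infty$ and $\ve\to0$ and using that $\varphi_j\searrow\varphi_0$ (so $\inf_X\varphi_j\to\inf_X\varphi_0$) then gives $\varphi_\gamma(t)\geqslant\inf_X\varphi_0+t\psi_\gamma-Ct$ in $X\setminus D$, and the inequality extends to all of $X$ since $\varphi_\gamma(t)$ is $\omega_{\gamma t}$-psh (hence its restriction to $X\setminus D$ determines it, being locally bounded above across $D$). Restricting to $t\in[0,1]$ converts $-Ct$ into $-C$.

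An alternative, which I would mention as a fallback, is to avoid barriers altogether and integrate the differential inequality of Proposition \ref{0321001}: from $\dot\varphi_\gamma(t)\leqslant\frac{\varphi_\gamma(t)-\varphi_0}{t}+n$ one gets $\frac{d}{dt}\bigl(\frac{\varphi_\gamma(t)-\varphi_0}{t}\bigr)\leqslant\frac nt$, which is the wrong direction for a lower bound; so instead one needs a matching \emph{lower} bound on $\dot\varphi_\gamma(t)$, which is not yet available at this point in the paper (it is established later). Hence the barrier route above is the one to pursue, and the main obstacle is precisely the bookkeeping at $t\to0^+$: making the constant $C$ in the sub-solution barrier genuinely uniform in $\gamma,\ve,j$ while the singular terms $h+(1-\gamma)\log|s|_h^2$ and $\psi_\gamma$ are played off against each other via \eqref{03200241}, and then legitimately passing to the limit $j\to\infty$, $\ve\to0$ without losing the lower bound near the divisor. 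Once this is set up, the maximum principle and the monotone convergence $\varphi_j\searrow\varphi_0$ close the argument cleanly.
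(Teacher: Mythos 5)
Your overall strategy --- a parabolic barrier (sub-solution) argument run on the smooth approximations \eqref{TC} --- is genuinely different from the paper's proof, which freezes the time $t_0$, uses the upper bound $\dot\varphi_\gamma(t)\leqslant\frac{\varphi_\gamma(t)-\varphi_0}{t}+n$ of Proposition \ref{0321001} to convert \eqref{CC} into an elliptic Monge--Amp\`ere inequality, and applies the elliptic minimum principle to $\varphi_\gamma(t_0)-t_0\psi_\gamma-\epsilon\log|s|_h^2$ at a spatial minimum point; that route sidesteps the initial-value issue you worry about entirely. Your route is viable in principle (the paper itself follows it in Remark \ref{0515010} and, in refined form, in Proposition \ref{0329010}), but as written it has a genuine gap in the verification that $\underline\psi_\gamma(t)=\inf_X\varphi_0+t\psi_\gamma-Ct$ is a sub-solution. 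The inequality $\log\frac{(\omega_{\gamma t}+\dd\underline\psi_\gamma(t))^n}{\omega^n}\geqslant -C'+\log\frac{\omega_\gamma^n}{\omega^n}$ with $C'$ uniform in $t$ is false: writing $\omega_{\gamma t}+\dd\underline\psi_\gamma(t)=t(\omega_{\gamma t}+\dd\psi_\gamma)+(1-t)\omega_{\gamma t}$, only the first summand carries the conical singularity $|s|_h^{-2(1-\gamma)}$ of $\omega_\gamma$, so the best one can extract from \eqref{0320027} is $(\omega_{\gamma t}+\dd\underline\psi_\gamma(t))^n\geqslant\frac{t^n}{C}\omega_\gamma^n$, and the lower bound on the Monge--Amp\`ere side is only $n\log t+\psi_\gamma-C''$, which degenerates to $-\infty$ as $t\to0^+$ and cannot dominate $\partial_t\underline\psi_\gamma=\psi_\gamma-C$ for any fixed $C$.

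The fix is to build this logarithmic loss into the barrier: take $\underline\psi_\gamma(t)=\inf_X\varphi_0+t\psi_\gamma+n(t\log t-t)-Ct$, whose time derivative $\psi_\gamma+n\log t-C$ is then dominated by the Monge--Amp\`ere side for a uniform $C$. Since $t\log t$ is bounded on $[0,1]$, this still yields \eqref{0321018} after enlarging the constant; this is exactly the bound recorded in Remark \ref{0515010}. The remaining steps of your plan --- comparing against \eqref{TC} using $\log(\ve^2+|s|_h^2)\geqslant\log|s|_h^2$, noting $\inf_X\varphi_j\geqslant\inf_X\varphi_0$, and letting $j\to\infty$, $\ve\to0$ --- are sound, though you should additionally perturb by $\epsilon\log|s|_h^2$ so that the minimum of the comparison function is attained away from $D$, since the barrier is only $\mathcal{C}^{2,\alpha,\gamma}$ across the divisor. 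Your observation that integrating Proposition \ref{0321001} goes in the wrong direction is correct.
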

\begin{proof}
We only consider the case $t\in(0,1]$. By using Proposition \ref{0321001}, we have 
\begin{equation}\label{0321019}
\begin{aligned}
\left(\omega_{\gamma t}+\dd\varphi_\gamma(t)\right)^n&=e^{\dot\varphi_\gamma(t)-h}\frac{\omega^n}{|s|_h^{2(1-\gamma)}}\\
&\leqslant e^{\frac{\varphi_\gamma(t)-\varphi_0}{t}+C-(1-\gamma)\log|s|_h^2+\log\frac{\omega^n}{\omega_\gamma^n}}\omega_\gamma^n\\
&\leqslant e^{\frac{\varphi_\gamma(t)-t\psi_\gamma}{t}+\psi_\gamma-\frac{1}{t}\inf\limits_X\varphi_0+C-(1-\gamma)\log|s|_h^2+\log\frac{\omega^n}{\omega_\gamma^n}}\omega_\gamma^n\\
&\leqslant e^{\frac{\varphi_\gamma(t)-t\psi_\gamma}{t}+\frac{1}{t}\left(C-\inf\limits_X\varphi_0\right)}\omega_\gamma^n,
\end{aligned}
\end{equation}
where we use inequality \eqref{03200241} in the last inequality.

For $0<\epsilon\ll1$, we denote 
\begin{equation}\label{0321020}
\phi_{\gamma}(t)=\varphi_\gamma(t)-t\psi_\gamma\ \ \ \text{and}\ \ \ \chi_{\gamma\epsilon}(t)=\phi_\gamma(t)-\epsilon\log|s|_h^2.
\end{equation}
For any $0<t_0\leqslant1$, we let $x_0$ be the minimum point of $\chi_{\gamma\epsilon}(t_0)$. Since $\phi_\gamma(t_0)$ is continuous on $X$, $x_0$ must be attained in $X\setminus D$. At $(t_0,x_0)$, by the maximum principle and \eqref{0320027}, there holds
\begin{equation}\label{0321021}
\begin{split}
\left(\omega_{\gamma t_0}+\dd\varphi_{\gamma}(t_0)\right)^n&=\left(\omega_{\gamma t_0}+t_0\dd\psi_\gamma+\dd\phi_{\gamma}(t_0)\right)^n\\
&=\left(\omega_{\gamma t_0}+t_0\dd\psi_\gamma-\epsilon\theta+\dd\chi_{\gamma\epsilon}(t_0)\right)^n\\
&\geqslant\left(\omega_{\gamma t_0}+t_0\dd\psi_\gamma-\epsilon\theta\right)^n\\
&=\left(t_0\left(\omega_{\gamma t_0}+\dd\psi_\gamma\right)+\left(1-t_0\right)\omega_{\gamma t_0}-\epsilon\theta\right)^n\\
&\geqslant\left(\frac{t_0}{2}\right)^n\left(\omega_{\gamma t_0}+\dd\psi_\gamma\right)^n\\
&\geqslant\frac{t_0^n}{C}\omega_\gamma^n
\end{split}
\end{equation}
for all $0<\gamma,\epsilon\ll1$. Therefore, by using \eqref{0321019}, at $(t_0,x_0)$, we have
\begin{equation}\label{0321022}
\frac{1}{t_0}\phi_\gamma(t_0,x_0)+\frac{1}{t_0}\left(C-\inf\limits_X\varphi_0\right)\geqslant n\log t_0-C,
\end{equation}
that is, 
\begin{equation}\label{0321023}
\phi_\gamma(t_0,x_0)\geqslant \inf\limits_X\varphi_0+nt_0\log t_0-C\geqslant\inf\limits_X\varphi_0-C.
\end{equation}
Hence 
\begin{equation}\label{0321024}
\chi_{\gamma\epsilon}(t_0)\geqslant\phi_\gamma(t_0,x_0)\geqslant\inf\limits_X\varphi_0-C,
\end{equation}
and then
\begin{equation}\label{0321025}
\varphi_\gamma(t_0)\geqslant t_0\psi_\gamma+\epsilon\log|s|_h^2+\inf\limits_X\varphi_0-C.
\end{equation}
Let $\epsilon\to0$, there holds
\begin{equation}\label{0321026}
\varphi_\gamma(t_0)\geqslant \inf\limits_X\varphi_0+t_0\psi_\gamma-C \ \ \ \text{on}\ \ \ X
\end{equation}
for some uniform constant $C$ depending only on $n$, $\omega$, $\theta$, $\eta$ and $T$. Since $t_0$ is arbitrary, we complete the proof.
\end{proof}
\begin{rem}\label{0515010}
In fact, by using similar arguments as in the proof of Proposition \ref{0320029}, we can also give a uniform low bound as follows. There exists a uniform constant $C$ depending only on $n$, $\omega$, $\theta$, $\eta$ and $T$ such that
\begin{equation}\label{0515011}
\varphi_\gamma(t)\geqslant \inf\limits_X\varphi_0+t\psi_\gamma+n(t\log t-t)-Ct\ \ \ \text{on}\ \ \ X
\end{equation}
for all $t\in[0,1]$ and $0<\gamma\ll1$. 

In Proposition \ref{0329010}, we will show a better uniform lower bound which improves the terms $\inf\limits_X\varphi_0$ in \eqref{0321018} and \eqref{0515011} to be $\varphi_0$. It is very important in showing the limit behavior of the limit flow near $t=0$.
\end{rem}
On the foundation of above uniform estimates, we prove a uniform lower bound on $\dot\varphi_\gamma(t)$ in a uniform short time. In this proof, the originality of the auxiliary function lies in the coefficient $t+\sigma$ of the term $\left(\dot\varphi_\gamma(t)-\psi_\gamma\right)$ for some uniform small positive constant $\sigma$, which is only $t$ in previous literature, such as \cite{JSGT, VGAZ2, JWLXZ1, JWLXZ19, LSZ} etc. The main reason is that in this proof, we need to derive a term $\sigma\tr_{\omega_\gamma(t)}\omega_\gamma$ with uniform coefficient $\sigma>0$ for all $t>0$ and $0<\gamma\ll1$ to control the term $\log\frac{\omega^n_{\gamma}(t)}{\omega_\gamma^n}$ coming from $\dot\varphi_\gamma(t)$ by the mean value inequality. But the background metric considered here is $\omega_{\gamma t}+t\dd\psi_\gamma$, due to the coefficient of $\dd\psi_\gamma$ is $t$, we can only derive the term $t\tr_{\omega_\gamma(t)}\omega_\gamma$, which can not imply a uniform lower bound for all $t>0$. So we choose the coefficient $t+\sigma$ in the new auxiliary function.
\begin{pro}\label{0321027}
There exist uniform positive constants $\sigma$ and $\kappa$ depending only on $\eta$, $\theta$ and $\omega$, and  a uniform constant $C$ depending only on $n$, $\omega$, $\theta$, $\eta$ and $T$ such that
\begin{equation}\label{0321028}
\sigma\left(\dot\varphi_\gamma(t)-\psi_\gamma\right)\geqslant-2\osc\limits_X\varphi_0+n\log t-C\ \ \ \text{in}\ \ \ X\setminus D
\end{equation}
for all $t\in(0,\kappa]$ and $0<\gamma\ll1$.
\end{pro}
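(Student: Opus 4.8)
The plan is to work with the smooth approximation \eqref{TC} and its time-derivative, mimicking the standard Song--Tian / Guedj--Zeriahi lower-bound-on-$\dot\varphi$ argument but with the modified auxiliary function announced in the text. First I would set $u_{\gamma\ve j}=\dot\varphi_{\gamma\ve j}(t)-\psi_\gamma$ and consider a quantity of the shape
\begin{equation}\label{0321029}
Q_{\gamma\ve j}(t)=(t+\sigma)\left(\dot\varphi_{\gamma\ve j}(t)-\psi_\gamma\right)-A\left(\varphi_{\gamma\ve j}(t)-t\psi_\gamma\right)+n t-B t,
\end{equation}
for uniform constants $\sigma,A,B>0$ to be fixed, where $\sigma$ is the small positive constant from the statement. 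Using \eqref{0321006}, \eqref{0321007}, \eqref{0321008} and the evolution $\frac{\partial}{\partial t}\varphi_{\gamma\ve j}(t)=\dot\varphi_{\gamma\ve j}(t)$, a direct computation gives a parabolic inequality of the form
\begin{equation}\label{0321030}
\dl Q_{\gamma\ve j}\leqslant -\sigma\,\tr_{\omega_{\gamma\ve j}(t)}\omega_\gamma-(\text{good terms})+C\,\tr_{\omega_{\gamma\ve j}(t)}\omega+C,
\end{equation}
the crucial point being that the coefficient in front of $\dd\psi_\gamma$ in the relevant background metric is now $t+\sigma\geqslant\sigma>0$ rather than $t$, so one genuinely produces a uniform negative multiple of $\tr_{\omega_{\gamma\ve j}(t)}\omega_\gamma$ for all small $t$; here I would invoke \eqref{0320027} to compare $\omega_{\gamma t}+(t+\sigma)\dd\psi_\gamma$ with $\omega_\gamma$ after restricting to $t\leqslant\kappa$ with $\kappa$ uniformly small.

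Next, at an interior spatial minimum of $Q_{\gamma\ve j}(t)$ (which lies off $D$ by continuity, exactly as in the proof of Proposition \ref{0321017}), I would use the arithmetic--geometric mean inequality $\tr_{\omega_{\gamma\ve j}(t)}\omega_\gamma\geqslant n\left(\frac{\omega_\gamma^n}{\omega_{\gamma\ve j}(t)^n}\right)^{1/n}$ together with the identity $\omega_{\gamma\ve j}(t)^n=e^{\dot\varphi_{\gamma\ve j}(t)-h}|s|_h^{-2(1-\gamma)}\omega^n$ and \eqref{03200241} to bound $\sigma\,\tr_{\omega_{\gamma\ve j}(t)}\omega_\gamma$ from below by $c\,e^{-\frac{1}{n}(\dot\varphi_{\gamma\ve j}(t)-\psi_\gamma)}$ up to uniform constants. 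Feeding \eqref{0321030} into the parabolic maximum principle for $Q_{\gamma\ve j}$ on $[0,\kappa]\times X$, one obtains at the minimum point an inequality of the form $e^{-\frac1n(\dot\varphi_{\gamma\ve j}-\psi_\gamma)}\leqslant \frac{C}{t}$, i.e. a lower bound $\dot\varphi_{\gamma\ve j}(t)-\psi_\gamma\geqslant n\log t - C - (\text{error from the }A(\varphi_{\gamma\ve j}-t\psi_\gamma)\text{ term})$; the error is controlled by the short-time lower bound on $\varphi_\gamma$ from Proposition \ref{0321017} (or Remark \ref{0515010}), which contributes the $-2\osc_X\varphi_0$ on the right of \eqref{0321028} (one copy from $\varphi_\gamma(t)\geqslant\inf_X\varphi_0+t\psi_\gamma-C$, the other absorbed through $\sup_X\varphi_0$ via Proposition \ref{0320029} when comparing $Q$ at time $t$ with its initial value). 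Choosing $A$ large enough to dominate the $\tr_{\omega_{\gamma\ve j}(t)}\omega$ contribution coming from $\Delta_{\omega_{\gamma\ve j}(t)}\varphi_{\gamma\ve j}$, and then $B$ and $\kappa$ appropriately, closes the estimate uniformly in $\ve$, $j$ and small $\gamma$.

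Finally I would pass to the limit: letting $j\to\infty$ and $\ve\to 0$, the convergence $\varphi_{\gamma\ve j}(t)\to\varphi_\gamma(t)$ and $\dot\varphi_{\gamma\ve j}(t)\to\dot\varphi_\gamma(t)$ in $\mathcal{C}^\infty_{loc}(X\setminus D)$ (as in the construction recalled around \eqref{TC}) transfers \eqref{0321028} to the flow \eqref{CC} itself on $X\setminus D$, with $C$ depending only on $n$, $\omega$, $\theta$, $\eta$ and $T$ (through $\kappa$), and $\sigma,\kappa$ depending only on $\eta$, $\theta$, $\omega$. The main obstacle I anticipate is bookkeeping in \eqref{0321030}: one must check that every term with an uncontrolled sign is either a genuine ``good term'' (a negative multiple of $\tr_{\omega_{\gamma\ve j}(t)}\omega_\gamma$, produced precisely by the $\sigma$-shift) or is dominated by the $-A\,\Delta_{\omega_{\gamma\ve j}(t)}\varphi_{\gamma\ve j}=-An+A\,\tr_{\omega_{\gamma\ve j}(t)}\omega_{\gamma t}$ term and by choosing $\kappa$ small; getting the constants to be uniform in $\gamma$ as $\gamma\to 0$ rests entirely on \eqref{0320027} and \eqref{03200241}, which is why the argument is restricted to the uniform short time $t\in(0,\kappa]$.
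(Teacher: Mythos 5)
Your high-level strategy is the paper's: shift the coefficient of $\dot\varphi_\gamma(t)-\psi_\gamma$ from $t$ to $t+\sigma$ so that the evolution of the auxiliary function produces a trace of $\sigma\dd\psi_\gamma$ with coefficient bounded away from zero, then convert $\sigma\tr_{\omega_\gamma(t)}\omega_\gamma$ into $e^{-\frac1n\left(\dot\varphi_\gamma(t)-\psi_\gamma\right)}$ via the arithmetic--geometric mean inequality and \eqref{03200241}. But two of your concrete choices break the mechanism. First, the sign of the potential term. The paper's function is $G_\gamma(t)=(t+\sigma)\left(\dot\varphi_\gamma(t)-\psi_\gamma\right)+\left(\varphi_\gamma(t)-t\psi_\gamma\right)-n\log t-\epsilon\log|s|_h^2$, with coefficient $+1$ on $\varphi_\gamma(t)-t\psi_\gamma$: the term $-\Delta_{\omega_\gamma(t)}\varphi_\gamma(t)=-n+\tr_{\omega_\gamma(t)}\omega_{\gamma t}$ then supplies the positive $\tr_{\omega_\gamma(t)}\omega$ needed to assemble $\sigma\tr_{\omega_\gamma(t)}(\omega+\dd\psi_\gamma)=\sigma\tr_{\omega_\gamma(t)}\omega_\gamma$ (recall $\dd\psi_\gamma$ alone is not a positive form). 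With your $-A\left(\varphi_{\gamma\ve j}(t)-t\psi_\gamma\right)$ the same computation produces $-A\tr_{\omega_{\gamma\ve j}(t)}\omega_{\gamma t}$, an unbounded negative term, so $\dl Q_{\gamma\ve j}$ admits no useful lower bound and the minimum principle gives nothing; note also that your displayed inequality $\dl Q_{\gamma\ve j}\leqslant-\sigma\tr_{\omega_{\gamma\ve j}(t)}\omega_\gamma+\cdots$ points the wrong way (at a minimum one has $\dl Q_{\gamma\ve j}\leqslant0$ for free; to conclude $\tr\omega_\gamma\leqslant\frac{C}{t}$ there you need $\dl Q_{\gamma\ve j}\geqslant\sigma\tr_{\omega_{\gamma\ve j}(t)}\omega_\gamma-\frac{C}{t}$). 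You appear to have imported the structure of the Laplacian estimate (Proposition \ref{0229011}), where one does take $-A(\varphi_\gamma(t)-t\psi_\gamma)$ and does want a negative multiple of $\tr\omega_\gamma$; here the signs must be reversed.

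Second, the time boundary. You run the maximum principle on $[0,\kappa]\times X$ for the smooth approximations, but $Q_{\gamma\ve j}(0)=\sigma\left(\dot\varphi_{\gamma\ve j}(0)-\psi_\gamma\right)-A\varphi_j$ involves $\dot\varphi_{\gamma\ve j}(0)=\log\frac{\omega_j^n}{\omega^n}+h+(1-\gamma)\log(\ve^2+|s|_h^2)$, which has no lower bound uniform in $j$ and $\ve$; starting instead at $t_1>0$ and letting $t_1\to0$ is circular, since $\inf_X\sigma\left(\dot\varphi_{\gamma\ve j}(t_1)-\psi_\gamma\right)$ is exactly the quantity being estimated. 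The paper resolves this by working directly with $\varphi_\gamma(t)$ and inserting $-n\log t$ into $G_\gamma$: the crude, $\gamma$-dependent bound $(t+\sigma)\dot\varphi_\gamma(t)\geqslant n\sigma\log t$ near $t=0$ forces $G_\gamma(t)\to+\infty$ as $t\to0^+$, so the minimum is interior in time, and the $-\frac{n}{t}$ obtained by differentiating $-n\log t$ is precisely the source of the $\frac{C}{t}$ in $\tr_{\omega_\gamma(t)}\omega_\gamma\leqslant\frac{C}{t}$ and hence of the $n\log t$ in \eqref{0321028}. In your sketch that $\frac{C}{t}$ appears with no source; without it the argument would ``prove'' a $t$-independent lower bound on $\dot\varphi_\gamma(t)-\psi_\gamma$, which is false for merely bounded initial potentials. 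Two smaller omissions: the leftover $2\left(\dot\varphi_\gamma(t)-\psi_\gamma\right)$ in the evolution must be absorbed by the case analysis with $f(s)=\frac{n\sigma}{2}s^{\frac1n}-2\log s$, and an $-\epsilon\log|s|_h^2$ term is needed to keep the spatial minimum off $D$ once one works with $\varphi_\gamma(t)$ itself.
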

\begin{proof}
For any $0<\sigma\ll1$ and $\epsilon>0$, we define function
\begin{equation}\label{0321029}
G_\gamma(t)=\left(t+\sigma\right)\left(\dot\varphi_\gamma(t)-\psi_\gamma\right)+\left(\varphi_\gamma(t)-t\psi_\gamma\right)-n\log t-\epsilon\log|s|_h^2.
\end{equation}
For any fix $\gamma$, from the author results (Proposition $3.9$ in \cite{JWLXZ4}, see also Lemma $3.7$ in \cite{LSZ}), $(t+\sigma)\dot\varphi_\gamma(t)$ is bounded from below by $n\sigma\log t$ near $t=0$. And since $\varphi_\gamma(t)$ and $\psi_\gamma$ are continuous on $X$, and $n\sigma\log t-n\log t$ tends to $+\infty$ as $t\to0$, $G_\gamma(t)$ tends to $+\infty$ as $t\to0$ or $x\to D$ and so the minimum point of $G_\gamma(t)$ must be away from both $t=0$ and the divisor $D$, where $G_\gamma(t)$ is smooth.

In $X\setminus D$, direct computations show that
\begin{align}\label{0321030}\nonumber
\left(\frac{\partial}{\partial t}-\Delta_{\omega_{\gamma}(t)}\right)G_\gamma(t)&=\dot\varphi_\gamma(t)-\psi_\gamma+(t+\sigma)\left(\ddot\varphi_\gamma(t)-\Delta_{\omega_{\gamma}(t)}\dot\varphi_\gamma(t)\right)+(t+\sigma)\Delta_{\omega_{\gamma}(t)}\psi_\gamma\\\nonumber
&\ \ \ +\dot\varphi_\gamma(t)-\Delta_{\omega_{\gamma}(t)}\varphi_\gamma(t)-\psi_\gamma+t\Delta_{\omega_{\gamma}(t)}\psi_\gamma-\frac{n}{t}-\epsilon\tr_{\omega_\gamma(t)}\theta\\\nonumber
&=2\dot\varphi_\gamma(t)-2\psi_\gamma+(t+\sigma)\tr_{\omega_\gamma(t)}\nu_\gamma-\Delta_{\omega_{\gamma}(t)}\varphi_\gamma(t)-\frac{n}{t}-\epsilon\tr_{\omega_\gamma(t)}\theta\\
&\ \ \ +\left(2t+\sigma\right)\Delta_{\omega_{\gamma}(t)}\psi_\gamma\\\nonumber
&=2\left(\dot\varphi_\gamma(t)-\psi_\gamma\right)-n+\tr_{\omega_\gamma(t)}\left(\omega+(2t+\sigma)\nu_\gamma+\left(2t+\sigma\right)\dd\psi_\gamma-\epsilon\theta\right)-\frac{n}{t}\\\nonumber
&=\tr_{\omega_\gamma(t)}\left(\sigma\left(\omega+\dd\psi_\gamma\right)+\left((1-\sigma)\omega+(2t+\sigma)\nu_\gamma+2t\dd\psi_\gamma\right)-\epsilon\theta\right)\\\nonumber
&\ \ \ +2\left(\dot\varphi_\gamma(t)-\psi_\gamma\right)-n-\frac{n}{t}.
\end{align}
By using \eqref{03200241}, we have
\begin{equation}\label{0321031}
\begin{split}
\dot\varphi_\gamma(t)-\psi_\gamma&=\log\frac{\left(\omega_{\gamma t}+\dd\varphi_{\gamma}(t)\right)^n}{\omega^n}+h+(1-\gamma)\log|s|_h^2-\psi_\gamma\\
&=\log\frac{\left(\omega_{\gamma t}+\dd\varphi_{\gamma}(t)\right)^n}{\omega_\gamma^n}+\log\frac{\omega_\gamma^n}{\omega^n}+h+(1-\gamma)\log|s|_h^2-\psi_\gamma\\
&\geqslant\log\frac{\left(\omega_{\gamma t}+\dd\varphi_{\gamma}(t)\right)^n}{\omega_\gamma^n}-C.
\end{split}
\end{equation}
On the other hand, if $\nu_\gamma\geqslant-C\omega$ and $\theta\leqslant C\omega$ with some uniform large constant $C>2$, by using \eqref{0320028}, for $0<\sigma<\frac{1}{2(C+1)}$, $0<t<\frac{1}{8C}:=\kappa<\frac{1}{2}$ and $0<\epsilon<\frac{C-2}{4C(C-1)}$, we have
\begin{equation}\label{032103266}
\begin{split}
&\ \ \ (1-\sigma)\omega+(2t+\sigma)\nu_\gamma+2t\dd\psi_\gamma-\epsilon\theta\\
&\geqslant\left(1-\sigma-(2t+\sigma)C-\frac{1}{4\left(C-1\right)}-C\epsilon\right)\omega\\
&\geqslant0.
\end{split}
\end{equation}
Therefore, there exists a uniform constant $C$ depending only on $n$, $\omega$, $\theta$, $\eta$ and $T$ such that
\begin{equation}\label{0321030}
\begin{split}
\left(\frac{\partial}{\partial t}-\Delta_{\omega_{\gamma}(t)}\right)G_\gamma(t)&\geqslant2\log\frac{\left(\omega_{\gamma t}+\dd\varphi_{\gamma}(t)\right)^n}{\omega_\gamma^n}+\sigma\tr_{\omega_\gamma(t)}\omega_\gamma-\frac{C}{t}\\
&\geqslant2\log\frac{\left(\omega_{\gamma t}+\dd\varphi_{\gamma}(t)\right)^n}{\omega_\gamma^n}+\sigma n\left(\frac{\omega_\gamma^n}{\omega^n_{\gamma}(t)}\right)^{\frac{1}{n}}-\frac{C}{t},
\end{split}
\end{equation}
where we use the mean value inequality in the last inequality. 

Consider function 
\begin{equation}\label{0321031}
f(s)=\frac{n\sigma}{2}s^{\frac{1}{n}}-2\log s.
\end{equation}
There exists a constant $S$ depending only on $\sigma$, $l$ and $n$ such that $f(s)\geqslant0$ when $s\geqslant S$ or $0<s\leqslant1$.

Assume that $(t_0,x_0)$ is the minimum point of $G_\gamma(t)$ in $(0,\kappa]\times (X\setminus D)$. If $\frac{\omega_\gamma^n}{\omega^n_{\gamma}(t)}\geqslant S$ at $(t_0,x_0)$, then
\begin{equation}\label{0321032}
\left(\frac{\partial}{\partial t}-\Delta_{\omega_{\gamma}(t)}\right)G_\gamma(t)\geqslant\frac{n\sigma}{2}\left(\frac{\omega_\gamma^n}{\omega^n_{\gamma}(t)}\right)^{\frac{1}{n}}-\frac{C}{t}.
\end{equation}
The maximum principle implies that at $(t_0,x_0)$,
\begin{equation}\label{0321033}
\left(\frac{\omega_\gamma^n}{\omega^n_{\gamma}(t)}\right)\leqslant \left(\frac{2C}{\sigma nt_0}\right)^{n}
\end{equation}
and then
\begin{equation}\label{0321034}
\log\frac{\omega^n_{\gamma}(t)}{\omega_\gamma^n}\geqslant n\log t_0-C.
\end{equation}

If $\frac{\omega_\gamma^n}{\omega^n_{\gamma}(t)}<S$  at $(t_0,x_0)$, then
\begin{equation}\label{0321035}
\log\frac{\omega^n_{\gamma}(t)}{\omega_\gamma^n}\geqslant -\log S.
\end{equation} 
Combining above arguments with \eqref{03200241} and Proposition \ref{0321017}, we have
\begin{equation}\label{0321036}
\begin{split}
&\ \ \ \left(t+\sigma\right)\left(\dot\varphi_\gamma(t)-\psi_\gamma\right)+\left(\varphi_\gamma(t)-t\psi_\gamma\right)-n\log t-\epsilon\log|s|_h^2\\
&\geqslant\left(t_0+\sigma\right)\left(\dot\varphi_\gamma(t_0,x_0)-\psi_\gamma(x_0)\right)+\left(\varphi_\gamma(t_0,x_0)-t_0\psi_\gamma(x_0)\right)-n\log t_0\\
&\geqslant\left(t_0+\sigma\right)\left(\log\frac{\left(\omega_{\gamma t}+\dd\varphi_{\gamma}(t)\right)^n}{\omega^n}+h+(1-\gamma)\log|s|_h^2-\psi_\gamma\right)(t_0,x_0)\\
&\ \ \ +\inf\limits_X\varphi_0-C-n\log t_0\\
&=\left(t_0+\sigma\right)\left(\log\frac{\left(\omega_{\gamma t}+\dd\varphi_{\gamma}(t)\right)^n}{\omega_\gamma^n}+\log\frac{\omega_\gamma^n}{\omega^n}+h+(1-\gamma)\log|s|_h^2-\psi_\gamma\right)(t_0,x_0)\\
&\ \ \ +\inf\limits_X\varphi_0-C-n\log t_0\\
&\geqslant(t_0+\sigma)(n\log t_0-C)+\inf\limits_X\varphi_0-C-n\log t_0\\
&=-(1-t_0-\sigma)n\log t_0+\inf\limits_X\varphi_0-C\\
&\geqslant\inf\limits_X\varphi_0-C,
\end{split}
\end{equation}
where we use $t_0+\sigma<1$ uniformly. After letting $\epsilon\to0$ and using Proposition \ref{0320029}, we obtain
\begin{equation}\label{0321037}
\begin{split}
\left(t+\sigma\right)\left(\dot\varphi_\gamma(t)-\psi_\gamma\right)\geqslant-\osc\limits_X\varphi_0+n\log t-C.
\end{split}
\end{equation}
Then by using Corollary \ref{0405001}, we complete the proof .
\end{proof}
\begin{rem}\label{0516001}
In fact, the constant $C$ in \eqref{0321028} also depends on $\sigma$. Since $\sigma$ depends only on $\eta$, $\theta$ and $\omega$, we will no longer mention that the constant depends on it after we fix one $\sigma$ in the following arguments.
\end{rem}
Next, we extend above uniform estimates in short time to all time in $(0,T]$.
\begin{pro}\label{0322100}
For any $T\in\left(\kappa,T^\eta_{\max}\right)$, where $\kappa$ comes from Proposition \ref{0321027}, there exists a uniform constant $C$ depending only on $\|\varphi_0\|_{L^\infty(X)}$, $n$, $\omega$, $\theta$, $\eta$ and $T$ such that
\begin{equation}\label{032210666}
(T-t)\left(\dot\varphi_{\gamma}(t)-\psi_\gamma\right)\geqslant-C\ \ \ \text{in}\ \ \ X\setminus D
\end{equation}
for all $t\in[\kappa,T]$ and $0<\gamma\ll1$.

Furthermore, for any $T'\in(\kappa,T)$, there exists a uniform constant $C$ depending only on $\|\varphi_0\|_{L^\infty(X)}$, $n$, $\omega$, $\theta$, $\eta$, $T'$ and $T$ such that
\begin{equation}\label{0322101}
\dot\varphi_\gamma(t)\geqslant\psi_\gamma-C\ \ \ \text{in}\ \ \ X\setminus D
\end{equation}
for all $t\in[\kappa,T']$ and $0<\gamma\ll1$.
\end{pro}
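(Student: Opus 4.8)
The plan is to bootstrap the short-time estimate of Proposition~\ref{0321027} to all times in $(0,T]$ via a maximum principle argument on an auxiliary function whose time-coefficient is arranged to blow up like $(T-t)^{-1}$ as $t\to T$, so that the unfavorable term $-\tfrac{n}{t}$ (now bounded below on $[\kappa,T]$) is absorbed. First I would fix $T\in(\kappa,T^\eta_{\max})$, fix the uniform constant $\sigma>0$ from Proposition~\ref{0321027}, and consider, for $0<\epsilon\ll1$, the function
\begin{equation}\label{propaux}
F_\gamma(t)=(T-t)\big(\dot\varphi_\gamma(t)-\psi_\gamma\big)+A\big(\varphi_\gamma(t)-t\psi_\gamma\big)-B t-\epsilon\log|s|_h^2
\end{equation}
on $[\kappa,T)\times(X\setminus D)$ for suitable uniform constants $A,B>0$ to be chosen. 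As in the proof of Proposition~\ref{0321027}, because $\dot\varphi_\gamma(t)-\psi_\gamma$ is bounded on compact subsets of $(0,T)\times(X\setminus D)$ (by Corollary~\ref{0322011} from above and the weak-solution estimates), while $-\epsilon\log|s|_h^2\to+\infty$ near $D$, the spatial infimum of $F_\gamma(t)$ is attained away from $D$; and since $(T-t)(\dot\varphi_\gamma(t)-\psi_\gamma)\to+\infty$ as $t\to T$ by the lower bound $\dot\varphi_\gamma(t)-\psi_\gamma\geqslant -C/(T-t)$ that we are in fact trying to prove, one has to instead argue that the infimum of $F_\gamma$ over $[\kappa,T-\rho]\times(X\setminus D)$ is controlled for every small $\rho>0$ by a bound independent of $\rho$; alternatively, one works on $[\kappa,T']$ for $T'<T$ first (giving \eqref{0322101} directly) and then lets $T'\nearrow T$.

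Next I would compute the heat operator applied to $F_\gamma$. Using $\tfrac{\partial}{\partial t}\dot\varphi_\gamma(t)=\tr_{\omega_\gamma(t)}(\nu_\gamma+\dd\dot\varphi_\gamma(t))$ and $\Delta_{\omega_\gamma(t)}\varphi_\gamma(t)=n-\tr_{\omega_\gamma(t)}\omega_{\gamma t}$, exactly the same bookkeeping as in \eqref{0321030} gives, schematically,
\begin{equation}\label{propheat}
\Big(\tfrac{\partial}{\partial t}-\Delta_{\omega_\gamma(t)}\Big)F_\gamma(t)
=-(\dot\varphi_\gamma(t)-\psi_\gamma)+A(\dot\varphi_\gamma(t)-\psi_\gamma)-B
+\tr_{\omega_\gamma(t)}\big(\Theta_\gamma\big),
\end{equation}
where $\Theta_\gamma$ is a smooth $(1,1)$-form of the form $A\,\omega+\big((T-t)+A t\big)\nu_\gamma+\big((T-t)+A t\big)\dd\psi_\gamma-\epsilon\theta$, up to the already-understood lower-order pieces. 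Choosing $A$ large and then shrinking $\delta$ in \eqref{0320023}/\eqref{0320028} (hence making $\dd\psi_\gamma\geqslant -c\,\omega$ with $c$ as small as needed, uniformly in $\gamma$), together with $\nu_\gamma\geqslant -C\omega$ and $\theta\leqslant C\omega$, one arranges $\Theta_\gamma\geqslant\sigma'(\omega+\dd\psi_\gamma)=\sigma'\omega_\gamma$ for a uniform $\sigma'>0$ and all $t\in[\kappa,T]$. Then \eqref{03200241} turns the $(A-1)(\dot\varphi_\gamma(t)-\psi_\gamma)$ term into $(A-1)\log\frac{(\omega_{\gamma t}+\dd\varphi_\gamma(t))^n}{\omega_\gamma^n}-C$, and the mean value inequality $\tr_{\omega_\gamma(t)}\omega_\gamma\geqslant n(\omega_\gamma^n/\omega_\gamma^n(t))^{1/n}$ produces, as in \eqref{0321031}, a function $f(s)=\tfrac{n\sigma'}{2}s^{1/n}-(A-1)\log s$ that is $\geqslant 0$ for $s\leqslant1$ or $s\geqslant S$; a case distinction at the minimum point $(t_0,x_0)$ of $F_\gamma$ then yields $\log\frac{\omega_\gamma^n(t_0)}{\omega_\gamma^n}\geqslant -C$ (no $-n\log t_0$ term, since $t_0\geqslant\kappa$ is bounded below uniformly).

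Feeding this back into $F_\gamma(t_0,x_0)$, using Propositions~\ref{0320029} and \ref{0321017} (and Corollary~\ref{0405001} to compare $F_\gamma(\kappa)$ with $\osc_X\varphi_0$), gives $F_\gamma(t_0,x_0)\geqslant -C$ with $C$ uniform; since $F_\gamma(\kappa,x)$ is itself bounded below by $-C$ thanks to Proposition~\ref{0321027}, we obtain $\inf_{[\kappa,T-\rho]\times X}F_\gamma\geqslant -C$ uniformly in $\rho$, $\epsilon$ and $0<\gamma\ll1$. Letting $\epsilon\to0$ and discarding the (bounded, by Proposition~\ref{0320029} and Proposition~\ref{0321017}) term $A(\varphi_\gamma(t)-t\psi_\gamma)$ gives \eqref{032210666}; restricting to $t\in[\kappa,T']$ with $T'<T$ and dividing by $T-t\geqslant T-T'>0$ gives \eqref{0322101}. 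The main obstacle I anticipate is the sign bookkeeping in \eqref{propheat}: one must verify that the coefficient $(T-t)+At$ of $\dd\psi_\gamma$ stays in a range where the smallness of $\dd\psi_\gamma$ (controlled by $\delta$, which in turn was already pinned down in \eqref{0320028} depending on $T$) still lets $\Theta_\gamma$ be positive, and that the constant $A$ can be fixed \emph{before} $\delta$, so that there is no circularity in the choice of uniform constants. Once the constants are ordered correctly, the argument is a routine variant of the proof of Proposition~\ref{0321027}.
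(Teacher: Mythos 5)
Your proof takes a different and considerably heavier route than the paper's, and as written it contains a gap. The paper does not redo the case analysis of Proposition \ref{0321027}. It uses
\begin{equation*}
K_{\gamma\ve j}(t)=(T-t)\left(\dot\varphi_{\gamma\ve j}(t)-\psi_\gamma\right)+\left(\varphi_{\gamma\ve j}(t)-t\psi_\gamma\right)+nt-\epsilon\log|s|_h^2,
\end{equation*}
i.e.\ coefficient exactly $1$ (not a large $A$) on the potential term together with the extra $+nt$; with this normalization all zeroth order terms cancel and one finds
\begin{equation*}
\left(\frac{\partial}{\partial t}-\Delta_{\omega_{\gamma\ve j}(t)}\right)K_{\gamma\ve j}(t)=\tr_{\omega_{\gamma\ve j}(t)}\left(\omega+T\nu_\gamma+T\dd\psi_\gamma-\epsilon\theta\right)\geqslant0
\end{equation*}
by \eqref{0320016} and \eqref{0320028}. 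So $K_{\gamma\ve j}$ is a supersolution of the heat equation and its minimum over $[\kappa,T]\times X$ is attained on the slice $t=\kappa$ (and off $D$), where Propositions \ref{0320029}, \ref{0321017} and \ref{0321027} control everything, since $\kappa<\frac12<1$. No mean value inequality, no case distinction at the minimum point, and no estimates at interior times are required.

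The gap in your argument is precisely that it does require estimates at interior times. When the minimum of $F_\gamma$ occurs at some $t_0\in(\kappa,T]$, your conclusion $F_\gamma(t_0,x_0)\geqslant-C$ needs a $\gamma$-uniform lower bound on $\varphi_\gamma(t_0)-t_0\psi_\gamma$: the term $A\left(\varphi_\gamma(t)-t\psi_\gamma\right)$ must be bounded below before it can be discarded. At this stage of the paper that bound is only available for $t_0\leqslant1$, which is the range of Proposition \ref{0321017}; the all-time bound, Proposition \ref{0322013}, is deduced from Corollary \ref{0405003}, which combines Proposition \ref{0321027} with the very proposition you are proving, so invoking it here would be circular. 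Consequently your argument is incomplete whenever $T>1$, unless you add a further step (extend Proposition \ref{0321017} beyond $t=1$ by reworking the decomposition \eqref{0321021}, which uses $(1-t_0)\omega_{\gamma t_0}\geqslant0$, or bootstrap in unit time steps by integrating the derivative bound just obtained to propagate the potential bound forward). Two minor points: your worry that the infimum of $F_\gamma$ might not be attained as $t\to T$ is unfounded, since for each fixed $\gamma$ the quantity $\dot\varphi_\gamma(t)-\psi_\gamma$ is bounded on $[\kappa,T]\times(X\setminus D)$ by the $\gamma$-dependent estimate \eqref{0324003}, so only the uniformity in $\gamma$ is at issue; and the ordering of constants you flag is harmless, since $A$ can be chosen after the $\delta$ already fixed in \eqref{0320028}, using $A\omega+At\nu_\gamma=A\omega_{\gamma t}\geqslant\frac{A}{C}\omega$ to absorb the remaining terms.
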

\begin{proof}
Define 
\begin{equation}\label{0322102}
K_{\gamma\ve j}(t)=(T-t)\left(\dot\varphi_{\gamma\ve j}(t)-\psi_\gamma\right)+\left(\varphi_{\gamma\ve j}(t)-t\psi_\gamma\right)+nt-\epsilon\log|s|_h^2.
\end{equation}
Direct computations show that in $X\setminus D$,
\begin{equation}\label{0322103}
\begin{split}
\left(\frac{\partial}{\partial t}-\Delta_{\omega_{\gamma\ve j}(t)}\right)K_{\gamma\ve j}(t)&=(T-t)\tr_{\omega_{\gamma\ve j}(t)}\nu_\gamma+\tr_{\omega_{\gamma\ve j}(t)}(\omega_{\gamma t}-\epsilon\theta)+T\Delta_{\omega_{\gamma\ve j}(t)}\psi_\gamma\\
&=\tr_{\omega_{\gamma\ve j}(t)}\left(\omega+T\nu_\gamma+T\dd\psi_\gamma-\epsilon\theta\right).
\end{split}
\end{equation}
By using \eqref{0320016} and \eqref{0320028}, we have
\begin{equation}\label{0322104}
\omega+T\nu_\gamma+T\dd\psi_\gamma-\epsilon\theta\geqslant\left(\frac{1}{C}-\frac{1}{8\left(C-1\right)}-C\epsilon\right)\omega\geqslant0
\end{equation}
if we choose $\epsilon$ sufficiently small. Hence
\begin{equation}\label{0322105}
\left(\frac{\partial}{\partial t}-\Delta_{\omega_{\gamma\ve j}(t)}\right)K_{\gamma\ve j}(t)\geqslant0.
\end{equation}
Since $K_{\gamma\ve j}(t)$ tends to $+\infty$ as $x\to D$, the minimum point of $K_{\gamma\ve j}(t)$ on $[t_0,T]\times X$ with $t_0\geqslant0$ must be outside $D$. The maximum principle shows that
\begin{equation}\label{0322104}
\begin{split}
&\ \ \ (T-t)\left(\dot\varphi_{\gamma\ve j}(t)-\psi_\gamma\right)+\left(\varphi_{\gamma\ve j}(t)-t\psi_\gamma\right)+nt-\epsilon\log|s|_h^2\\
&\geqslant\inf\limits_X\left((T-t_0)\left(\dot\varphi_{\gamma\ve j}(t_0)-\psi_\gamma\right)+\left(\varphi_{\gamma\ve j}(t_0)-t_0\psi_\gamma\right)\right).
\end{split}
\end{equation}
Let $\epsilon\to0$, and then $j\to+\infty$ and $\ve\to0$, there holds
\begin{equation}\label{0322105}
\begin{split}
&\ \ \ (T-t)\left(\dot\varphi_{\gamma}(t)-\psi_\gamma\right)+\left(\varphi_{\gamma}(t)-t\psi_\gamma\right)+nt\\
&\geqslant\inf\limits_X\left((T-t_0)\left(\dot\varphi_{\gamma}(t_0)-\psi_\gamma\right)+\left(\varphi_{\gamma}(t_0)-t_0\psi_\gamma\right)\right).
\end{split}
\end{equation}
If we choose $t_0=\kappa$, by using Propositions \ref{0320029}, \ref{0321017} and \ref{0321027}, we have
\begin{equation}\label{0322106}
(T-t)\left(\dot\varphi_{\gamma}(t)-\psi_\gamma\right)\geqslant-C\ \ \ \text{in}\ \ \ X\setminus D
\end{equation}
for all $t\in[\kappa,T]$ and $0<\gamma\ll1$, where constant $C$ depends only on $\|\varphi_0\|_{L^\infty(X)}$, $n$, $\omega$, $\theta$, $\eta$ and $T$. Then it implies for any $T'\in(\kappa,T)$, there exists a uniform constant $C$ depending only on $\|\varphi_0\|_{L^\infty(X)}$, $n$, $\omega$, $\theta$, $\eta$, $T'$ and $T$ such that
\begin{equation}\label{0322107}
\dot\varphi_\gamma(t)\geqslant\psi_\gamma-C\ \ \ \text{in}\ \ \ X\setminus D
\end{equation}
for all $t\in[\kappa,T']$ and $0<\gamma\ll1$.
\end{proof}
\begin{rem}\label{0516002}
In fact, the constant $C$ in \eqref{032210666} also depends on $\kappa$. Since $\kappa$ depends only on $\eta$, $\theta$ and $\omega$, we will no longer mention that the constant depends on it in the following arguments.
\end{rem}
Combining Proposition \ref{0321027} with Proposition \ref{0322100}, we have the following uniform estimates.
\begin{cor}\label{0405003}
There exist uniform constants $\sigma$, and  a uniform constant $C$ depending only on $\|\varphi_0\|_{L^\infty(X)}$, $n$, $\omega$, $\theta$, $\eta$ and $T$ such that
\begin{equation}\label{0405004}
\sigma\left(\dot\varphi_\gamma(t)-\psi_\gamma\right)\geqslant n\log t-C\ \ \ \text{in}\ \ \ X\setminus D
\end{equation}
for all $t\in(0,T]$ and $0<\gamma\ll1$.
\end{cor}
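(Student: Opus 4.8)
The plan is to combine the short-time lower bound of Proposition \ref{0321027} with the long-time lower bound of Proposition \ref{0322100} by splitting the time interval $(0,T]$ at the threshold $\kappa$ coming from Proposition \ref{0321027}. First I would fix once and for all the uniform constant $\sigma>0$ provided by Proposition \ref{0321027} (depending only on $\eta$, $\theta$ and $\omega$), so that all remaining constants are allowed to depend on it tacitly, as noted in Remark \ref{0516001}. On the interval $(0,\kappa]$, the desired inequality is almost immediate: Proposition \ref{0321027} gives
\begin{equation}\label{0405005}
\sigma\left(\dot\varphi_\gamma(t)-\psi_\gamma\right)\geqslant-2\osc\limits_X\varphi_0+n\log t-C\geqslant n\log t-C'
\end{equation}
in $X\setminus D$ for all $t\in(0,\kappa]$ and $0<\gamma\ll1$, where $C'$ absorbs the term $2\osc_X\varphi_0\leqslant 4\|\varphi_0\|_{L^\infty(X)}$ into a constant depending only on $\|\varphi_0\|_{L^\infty(X)}$, $n$, $\omega$, $\theta$, $\eta$ and $T$.

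For the complementary interval $[\kappa,T]$, I would invoke Proposition \ref{0322100}. That proposition (inequality \eqref{032210666}) yields $(T-t)(\dot\varphi_\gamma(t)-\psi_\gamma)\geqslant-C$ for $t\in[\kappa,T]$; this controls $\dot\varphi_\gamma(t)-\psi_\gamma$ from below only on compact subintervals $[\kappa,T']$ with $T'<T$, since the factor $T-t$ degenerates at the endpoint. To get an estimate valid up to $T$ itself, the standard trick is to apply Proposition \ref{0322100} with $T$ replaced by a slightly larger time $\tilde T\in(T,T^\eta_{\max})$: then $\tilde T-t\geqslant\tilde T-T>0$ uniformly on $[\kappa,T]$, and \eqref{032210666} gives $\dot\varphi_\gamma(t)-\psi_\gamma\geqslant-C/(\tilde T-T)=:-C''$ on $[\kappa,T]$ for all $0<\gamma\ll1$, with $C''$ depending only on $\|\varphi_0\|_{L^\infty(X)}$, $n$, $\omega$, $\theta$, $\eta$ and $T$. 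Since $t\leqslant T$ on this interval, $n\log t\leqslant n\log^+ T$ is bounded, so we may rewrite $-\sigma C''\geqslant n\log t-C'''$ for a suitable constant $C'''$, i.e. $\sigma(\dot\varphi_\gamma(t)-\psi_\gamma)\geqslant n\log t-C'''$ on $[\kappa,T]$.

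Finally I would take $C$ to be the maximum of the constants obtained on the two subintervals; then \eqref{0405004} holds on all of $(0,T]\times(X\setminus D)$ for $0<\gamma\ll1$. The only mild subtlety — and the one place to be careful — is the endpoint degeneracy of the factor $T-t$ in Proposition \ref{0322100}, which is why the argument is run with the auxiliary larger time $\tilde T$; this is legitimate precisely because $T<T^\eta_{\max}$ leaves room to enlarge $T$, and all constants in Proposition \ref{0322100} depend on the chosen final time in a controlled way. No new maximum-principle argument is needed: this corollary is purely a bookkeeping synthesis of the two preceding propositions.
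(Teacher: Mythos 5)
Your proposal is correct and follows exactly the paper's route: the paper derives this corollary simply by combining Proposition \ref{0321027} on $(0,\kappa]$ with Proposition \ref{0322100} on $[\kappa,T]$, absorbing the $\osc_X\varphi_0$ term and the bounded $n\log t$ term into the constant. Your extra care about the endpoint degeneracy of the factor $T-t$ (running the argument with a slightly larger final time $\tilde T<T^\eta_{\max}$) is precisely what the second assertion \eqref{0322101} of Proposition \ref{0322100} is there for, so nothing is missing.
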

\begin{pro}\label{0322013}
For any $0<T<T^\eta_{\max}$, there exists a uniform constant $C$ depending only on $\|\varphi_0\|_{L^\infty(X)}$, $n$, $\omega$, $\theta$, $\eta$ and $T$ such that for any $t\in[0,T]$ and $0<\gamma\ll1$,
\begin{equation}\label{0322014}
\varphi_\gamma(t)\geqslant t\psi_\gamma-C\ \ \ \text{on}\ \ \ X.
\end{equation}
\end{pro}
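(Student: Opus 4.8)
The statement to prove is the uniform lower bound $\varphi_\gamma(t)\geqslant t\psi_\gamma - C$ on all of $[0,T]\times X$, with $C$ independent of $\gamma$. The plan is to combine the short-time lower bounds already established (Propositions \ref{0321017} and \ref{0322100}) by running the maximum principle forward on the correct auxiliary quantity, just as in the proof of Proposition \ref{0322100}. For $t\in[0,1]$ (or more precisely $t\in[0,\kappa]$), the bound is already contained in Proposition \ref{0321017}. For $t\in[\kappa,T]$, the idea is to use the lower bound on $\dot\varphi_\gamma(t)-\psi_\gamma$ provided by Proposition \ref{0322100}, namely $(T-t)(\dot\varphi_\gamma(t)-\psi_\gamma)\geqslant -C$, and integrate it forward from $t=\kappa$.

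First I would work with the smooth approximation $\varphi_{\gamma\ve j}(t)$ solving \eqref{TC}. Writing $\phi_{\gamma\ve j}(t)=\varphi_{\gamma\ve j}(t)-t\psi_\gamma$, one has (using $\omega_{\gamma t}+t\dd\psi_\gamma>0$ and the estimates \eqref{0320027}, \eqref{03200241}) that $\dot\phi_{\gamma\ve j}(t)=\dot\varphi_{\gamma\ve j}(t)-\psi_\gamma$ satisfies a parabolic inequality of the form $\bigl(\frac{\partial}{\partial t}-\Delta_{\omega_{\gamma\ve j}(t)}\bigr)\bigl(\dot\varphi_{\gamma\ve j}(t)-\psi_\gamma+\tr\text{-terms}\bigr)$ which was already analyzed in the corollaries above. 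The cleanest route, however, is simply to integrate: since by Proposition \ref{0322100} (applied after passing to the limit $j\to\infty$, $\ve\to0$, and also to a slightly larger $T$ so that $T-t$ stays bounded below on $[\kappa, T]$) we have $\dot\varphi_\gamma(s)-\psi_\gamma\geqslant -C$ on $[\kappa,T]\times(X\setminus D)$ for a uniform $C$, integrating from $\kappa$ to $t$ gives
\begin{equation*}
\varphi_\gamma(t)-t\psi_\gamma=\bigl(\varphi_\gamma(\kappa)-\kappa\psi_\gamma\bigr)+\int_\kappa^t\bigl(\dot\varphi_\gamma(s)-\psi_\gamma\bigr)\,ds\geqslant \inf_X\varphi_0 - C - C(T-\kappa),
\end{equation*}
using Proposition \ref{0321017} at $t=\kappa$ for the first term. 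This gives \eqref{0322014} on $[\kappa,T]$, and combined with Proposition \ref{0321017} on $[0,\kappa]$ (and the trivial bound at $t=0$) it yields the claim on all of $[0,T]$, with $C$ depending only on $\|\varphi_0\|_{L^\infty(X)}$, $n$, $\omega$, $\theta$, $\eta$ and $T$.

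The one point that needs care is the endpoint behavior of the integrated bound: the estimate of Proposition \ref{0322100} in the form $\dot\varphi_\gamma(t)-\psi_\gamma\geqslant\psi_\gamma-C$ (equation \eqref{0322101}) is only stated on $[\kappa,T']$ for $T'<T$, since the factor $(T-t)^{-1}$ blows up at $t=T$. To reach $t=T$ itself one applies the whole argument with $T$ replaced by some $\tilde T\in(T,T^\eta_{\max})$, which is legitimate since $T<T^\eta_{\max}$; then $T-t$ is replaced by $\tilde T-t\geqslant \tilde T-T>0$ on $[\kappa,T]$, and the integration goes through uniformly. The main (minor) obstacle is therefore just this bookkeeping of constants and the choice of auxiliary end time, together with verifying that all the estimates invoked — Propositions \ref{0320029}, \ref{0321017}, \ref{0321027}, \ref{0322100} — indeed pass to the limit $\gamma\to0$ with constants independent of $\gamma$, which has already been arranged in their statements. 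Since $\varphi_\gamma(t)-t\psi_\gamma$ is continuous on $X$ (Remark \ref{0502001}) and the singular term $\psi_\gamma$ has been absorbed, no further regularization near $D$ is needed, and letting first $\ve\to0$, then $j\to\infty$ as in the earlier proofs completes the argument.
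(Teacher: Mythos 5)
Your proposal is correct and follows essentially the same route as the paper: the short-time range is covered by Proposition \ref{0321017}, and for later times one integrates the uniform lower bound on $\dot\varphi_\gamma(s)-\psi_\gamma$ (obtained from Proposition \ref{0322100} applied with an enlarged end time $\tilde T\in(T,T^\eta_{\max})$, which is exactly how the paper's Corollary \ref{0405003} is derived) from a fixed base time, and then invokes Proposition \ref{0321017} at that base time. The paper integrates from $t=1$ rather than $t=\kappa$, but this is only a cosmetic difference.
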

\begin{proof}
From Proposition \ref{0321017}, we need only consider the case $t\geqslant1$. In this case, Corollary \ref{0405003} implies that there exists a uniform constant $C$ depending only on $\|\varphi_0\|_{L^\infty(X)}$, $n$, $\omega$, $\theta$, $\eta$ and $T$ such that
\begin{equation}\label{0405004666}
\dot\varphi_\gamma(t)-\psi_\gamma\geqslant -C\ \ \ \text{in}\ \ \ X\setminus D
\end{equation}
for all $t\in[1,T]$ and $0<\gamma\ll1$.

Integrating \eqref{0405004666} from $1$ to $t$ on both sides, we have
\begin{equation}\label{0322015}
\varphi_\gamma(t)-\varphi_\gamma(1)\geqslant(t-1)\psi_\gamma-C(t-1)\ \ \ \text{on}\ \ \ X,
\end{equation}
which implies that 
\begin{equation}\label{0322016}
\varphi_\gamma(t)\geqslant t\psi_\gamma+\varphi_\gamma(1)-\psi_\gamma-C(t-1)\ \ \ \text{on}\ \ \ X.
\end{equation}
Then inequality \eqref{0322014} follows from Proposition \ref{0321017}.
\end{proof}

\section{High order uniform estimates}\label{Bou2}
In this section, we show the high order uniform estimates along the twisted conical K\"ahler-Ricci flow. The most important one is the uniform Laplacian $C^2$-estimate, where the key step is to compare $\omega_\gamma(t)$ with the conical K\"ahler metric $\omega_\gamma$. Since we only obtain the uniform estimates on the metric potential with respect to the metric $\omega_{\gamma t}+t\dd\psi_\gamma$ which is a conical K\"ahler metric depending on $t$, there is on uniform lower bound on $\omega_{\gamma t}+t\dd\psi_\gamma$ for all $t>0$ and so we need some new arguments. First, we recall the following uniform estimates on the holomorphic bisectional curvature of $\omega_\gamma$, which was proved by Guenancia \cite{G11}.
\begin{lem}[Theorem $3.2$ in \cite{G11}]\label{032400466}
There exists a constant $C$ depending only on $X$ such that for all $\gamma\in\left(0,\frac{1}{2}\right]$, the holomorphic bisectional curvature of $\omega_\gamma$ is bounded by $C$.
\end{lem}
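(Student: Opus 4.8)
Since the holomorphic bisectional curvature of $\omega_\gamma$ is a local quantity and $\omega_\gamma$ is a genuine smooth K\"ahler metric on $X\setminus D$ converging in $\mathcal{C}^\infty_{loc}(X\setminus D)$ to $\omega_o$, the bound is automatic away from $D$; the whole content is a uniform estimate in a neighbourhood of $D$. The plan is to fix a point $p\in D$, take holomorphic coordinates $(z_1,\dots,z_n)$ on a polydisc $U$ with $D\cap U=\{z_n=0\}$ together with a local trivialization of $(L_D,h)$ in which $|s|_h^2=|z_n|^2e^{-\phi}$ for a smooth $\phi$ with $\sqrt{-1}\partial\bar\partial\phi=\theta$, and (after rescaling) assume $\omega$ is $\mathcal{C}^k$-close to the Euclidean metric on $U$. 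A direct computation of $R(\omega_\gamma)$ from the explicit formula \eqref{0320018} for $\psi_\gamma$ is possible but unwieldy; instead I would use the quasi-coordinate method of Cheng--Yau, Tian--Yau and Kobayashi, adapted to cone angle $2\pi\gamma$.

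The key step is to construct a family of holomorphic maps $\Phi_{a,\gamma}\colon B\to U\setminus D$ from a fixed ball $B\subset\mathbb{C}^n$ (independent of $\gamma\in(0,\tfrac12]$ and of a parameter $a$ in a fixed domain) whose images cover $U\setminus D$ and all of whose derivatives are bounded above and below uniformly in $a$ and $\gamma$. In the transverse directions $\Phi_{a,\gamma}$ is the identity; in the $z_n$-direction it is the composition of the branched cover $z_n\mapsto z_n^{\gamma}$ with a M\"obius automorphism of the unit disc centred at $a$. This choice is forced by the one-variable identity that $z_n\mapsto z_n^{\gamma}$ pulls the hyperbolic density $\frac{|d\zeta|^2}{(1-|\zeta|^2)^2}$ back to exactly the cone--Poincar\'e density $\frac{\gamma^2|dz_n|^2}{|z_n|^{2(1-\gamma)}(1-|z_n|^{2\gamma})^2}$ which is the leading part of $\sqrt{-1}\partial\bar\partial\psi_\gamma$; thus the vertical block of $\Phi_{a,\gamma}^*\omega_\gamma$ becomes the pull-back of a metric of bounded geometry.

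Next I would verify that $\Phi_{a,\gamma}^*\omega_\gamma$ is a K\"ahler metric on $B$ that is uniformly equivalent to the Euclidean metric and whose derivatives of every order are bounded uniformly in $a$ and $\gamma$. The vertical block is handled by the identity above; the horizontal block is $\omega$ plus $\mathcal{C}^k$-small terms; the mixed and remainder terms arise from $\sqrt{-1}\partial\bar\partial\psi_\gamma$ and are built from $\phi$, $\theta$ and the scalar $\frac{\gamma|s|_h^{2\gamma}}{1-|s|_h^{2\gamma}}$, which is bounded by \eqref{0320021} and whose derivatives become bounded once read in the quasi-coordinates. Since the holomorphic bisectional curvature is a universal expression in $g^{-1}$ and the first two derivatives of $g$, it is then bounded, in the quasi-coordinates, by a constant depending only on $\omega,\phi,\theta$, i.e. only on $X$; and because the $\Phi_{a,\gamma}$ are local biholomorphisms, curvature transfers back to $\omega_\gamma$ with the same bound, uniformly in $\gamma\in(0,\tfrac12]$.

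The main obstacle is precisely this last uniform $\mathcal{C}^k$-control of $\Phi_{a,\gamma}^*\omega_\gamma$: one must check that the branch-point degeneracy of $z_n\mapsto z_n^{\gamma}$, the vanishing of $1-|z_n|^{2\gamma}$ along $D$, and the smooth weight $e^{-\phi}$ in $|s|_h^2$ combine so that no derivative of the pulled-back metric blows up, and --- the delicate point --- that these bounds remain uniform as $\gamma\to0^+$, the regime in which $\omega_\gamma$ degenerates to the cusp metric $\omega_o$. Arranging the computation so that the three singular contributions cancel, rather than bounding each separately, is the technical heart of Guenancia's Theorem $3.2$ \cite{G11}.
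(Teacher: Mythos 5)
First, a point of comparison: the paper does not prove this statement at all --- it is imported verbatim as Theorem $3.2$ of Guenancia \cite{G11}, so there is no internal proof to measure you against. Judged on its own terms, your proposal is a strategy outline rather than a proof. You correctly identify the only nontrivial content (uniformity near $D$ as $\gamma\to0^+$), and your one-variable identity is right: with $\zeta=z_n^{\gamma}$ one has $d\zeta=\gamma z_n^{\gamma-1}dz_n$ and $1-|\zeta|^2=1-|z_n|^{2\gamma}$, so the hyperbolic density pulls back exactly to the cone--Poincar\'e density appearing as the leading block of $\sqrt{-1}\partial\bar\partial\psi_\gamma$. The quasi-coordinate route (Cheng--Yau, Tian--Yau, Kobayashi) is a legitimate way to attack such a bound. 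But you then write ``Next I would verify that $\Phi_{a,\gamma}^*\omega_\gamma$ is \dots uniformly equivalent to the Euclidean metric and whose derivatives of every order are bounded uniformly in $a$ and $\gamma$,'' and close by declaring that arranging the cancellations ``is the technical heart of Guenancia's Theorem $3.2$.'' That verification \emph{is} the theorem: a uniform two-sided $\mathcal{C}^2$-control of the pulled-back metric in charts of fixed size is essentially equivalent to the uniform curvature bound, so deferring it to the cited reference leaves the statement unproven. No construction of the charts $\Phi_{a,\gamma}$ is actually given (which simply connected domains avoiding the branch point, which M\"obius parameters $a$, why the images cover a fixed neighbourhood of $D$ with uniformly bounded overlap, and --- the genuinely delicate issue you name but do not address --- why the derivative bounds do not degenerate as $\gamma\to0^+$, where $\zeta\mapsto\zeta^{1/\gamma}$ has derivative $\tfrac1\gamma\zeta^{1/\gamma-1}$ and the charts must limit onto the cusp quasi-coordinates).

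Concretely, the missing steps are: (i) an explicit uniform family of quasi-coordinates valid for all $\gamma\in(0,\tfrac12]$ simultaneously, including the degeneration to the Poincar\'e/cusp regime; (ii) the actual estimate showing that the non-leading pieces of $\sqrt{-1}\partial\bar\partial\psi_\gamma$ --- the terms involving $\theta$, the weight $e^{-\phi}$, and the scalar $\gamma|s|_h^{2\gamma}/(1-|s|_h^{2\gamma})$ together with its first and second derivatives --- are uniformly $\mathcal{C}^2$-small in those coordinates; and (iii) the verification that $\Phi_{a,\gamma}^*\omega_\gamma$ is uniformly bounded \emph{below}, without which the curvature formula $R=-\partial\bar\partial g+g^{-1}\partial g\,\overline{\partial g}$ gives nothing. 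For the record, Guenancia's own argument in \cite{G11} proceeds by a direct local computation of the curvature tensor of $\omega_\gamma$, estimating each term using the explicit structure of $\psi_\gamma$ (in the spirit of the Campana--Guenancia--P\u{a}un computations), rather than via quasi-coordinates; either route is acceptable, but one of them has to be carried out, and your write-up carries out neither.
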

Next, we prove the uniform Laplacian $C^2$-estimate by constructing a new auxiliary function whose originality lies in the new terms $(t-\tau)\left(\dot\varphi_\gamma(t)-\psi_\gamma\right)$ (with uniform small positive constant $\tau$) and $\log\left(t-\delta\right)$ added. The main reason for adding the first term is that in this proof, we need to derive a negative term $-\tau\tr_{\omega_\gamma(t)}\omega_\gamma$ with uniform coefficient $\tau>0$ for all $0<t<T$ and $0<\gamma\ll1$ to cancel the term $\tr_{\omega_{\gamma}(t)}\omega_{\gamma}$ coming from the evolution equation of $\log\tr_{\omega_{\gamma}(t)}\omega_{\gamma}$. But the background metric considered here is $\omega_{\gamma t}+t\dd\psi_\gamma$, which can only contribute the term $-t\tr_{\omega_\gamma(t)}\omega_\gamma$ and so there is no uniform bound for all $t>0$. So we add the term $A(t-\tau)\left(\dot\varphi_\gamma(t)-\psi_\gamma\right)$ in the new auxiliary function. Moreover, the second new term $\log\left(t-\delta\right)$ ensures the maximum point of the auxiliary function can not be attained at the time $t=\delta$.

\begin{pro}\label{0229011}
For any $0<T<T^\eta_{\max}$, there exists a uniform constant $C$ depending only on $\|\varphi_0\|_{L^\infty(X)}$, $n$, $\omega$, $\theta$, $\eta$ and $T$ such that 
\begin{equation}\label{0229012}
\log\tr_{\omega_{\gamma}(t)}\omega_{\gamma}\leqslant \frac{C}{t^2}.
\end{equation}
for all $t\in\left(0,T\right]$ and $0<\gamma\ll1$.
\end{pro}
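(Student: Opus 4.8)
The goal is a uniform (in $\gamma$) Laplacian $\mathcal{C}^2$-estimate comparing $\omega_\gamma(t)$ with the model conical metric $\omega_\gamma$. Since we only control the potential $\varphi_\gamma(t)$ relative to the $t$-dependent conical metric $\omega_{\gamma t}+t\sqrt{-1}\partial\bar{\partial}\psi_\gamma$, and that metric degenerates as $t\to 0$, the classical Aubin--Yau computation applied directly produces a term $-t\,\tr_{\omega_\gamma(t)}\omega_\gamma$ with coefficient $t$, which is not enough to absorb the positive $\tr_{\omega_\gamma(t)}\omega_\gamma$ coming from differentiating $\log\tr_{\omega_\gamma(t)}\omega_\gamma$ uniformly for all $t>0$. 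To fix this I would, following the philosophy already used in Propositions \ref{0321027} and \ref{0322100}, run the argument on the smooth approximation \eqref{TC} and introduce the auxiliary function
\begin{equation}\label{0229013}
Q_{\gamma\ve j}(t)=\log\tr_{\omega_{\gamma\ve j}(t)}\omega_{\gamma\ve j}-A\varphi_{\gamma\ve j}(t)+At\psi_\gamma-A(t-\tau)\bigl(\dot\varphi_{\gamma\ve j}(t)-\psi_\gamma\bigr)-n\log(t-\delta),
\end{equation}
where $\omega_{\gamma\ve j}$ is Guenancia's regularized conical metric (so that $\tr_{\omega_{\gamma\ve j}(t)}\omega_{\gamma\ve j}$ makes sense smoothly and converges to $\tr_{\omega_\gamma(t)}\omega_\gamma$), $A$ is a large uniform constant, $\tau\in(0,\kappa)$ a small uniform constant, and $\delta>0$ an arbitrary time that will be sent to $0$. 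The term $\log(t-\delta)$ forces the maximum of $Q_{\gamma\ve j}$ on $[\delta,T]$ to be attained at some $t_0\in(\delta,T]$; the term $-A(t-\tau)(\dot\varphi_{\gamma\ve j}(t)-\psi_\gamma)$ is what upgrades the coefficient of $-\tr_{\omega_\gamma(t)}\omega_\gamma$ from $t$ to something bounded below by the uniform constant $\tau$.

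\textbf{Key steps.} First I would record the parabolic Aubin--Yau inequality: using Lemma \ref{032400466} (uniform bound on the bisectional curvature of $\omega_\gamma$, hence of $\omega_{\gamma\ve j}$) together with the standard computation, there is a uniform constant $C_0$ with
\begin{equation}\label{0229014}
\dl\log\tr_{\omega_{\gamma\ve j}(t)}\omega_{\gamma\ve j}\leqslant C_0\,\tr_{\omega_{\gamma\ve j}(t)}\omega_{\gamma\ve j}+C_0,
\end{equation}
away from $D$. Second, I would compute $\dl$ applied to the remaining terms of $Q_{\gamma\ve j}$: the term $-A\varphi_{\gamma\ve j}(t)$ contributes $-A\dot\varphi_{\gamma\ve j}+A\,\tr_{\omega_{\gamma\ve j}(t)}(\omega_{\gamma t}+t\sqrt{-1}\partial\bar{\partial}\psi_\gamma)$ after using $\Delta_{\omega_{\gamma\ve j}(t)}\varphi_{\gamma\ve j}=n-\tr_{\omega_{\gamma\ve j}(t)}\omega_{\gamma t}$; the term $-A(t-\tau)(\dot\varphi_{\gamma\ve j}-\psi_\gamma)$ contributes, via \eqref{0321006} and $\Delta_{\omega_{\gamma\ve j}(t)}\psi_\gamma$-bookkeeping, a piece of the form $-A\,\tr_{\omega_{\gamma\ve j}(t)}\bigl((t-\tau)\nu_\gamma+(t-\tau)\sqrt{-1}\partial\bar{\partial}\psi_\gamma\bigr)$ plus lower-order controlled terms; and $+At\psi_\gamma$ contributes $+At\,\Delta_{\omega_{\gamma\ve j}(t)}\psi_\gamma$. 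Collecting everything, the coefficient of $\tr_{\omega_{\gamma\ve j}(t)}(\cdot)$ multiplying $A$ becomes $\omega_{\gamma t}+t\sqrt{-1}\partial\bar{\partial}\psi_\gamma-(t-\tau)\nu_\gamma-(t-\tau)\sqrt{-1}\partial\bar{\partial}\psi_\gamma+t\sqrt{-1}\partial\bar{\partial}\psi_\gamma=\omega-(t-\tau)\cdot 0\cdot(\text{stuff})+\tau\sqrt{-1}\partial\bar{\partial}\psi_\gamma+\cdots$, which by \eqref{0320016}, \eqref{0320027} and \eqref{0320028} dominates a uniform multiple of $\omega_\gamma$ for $0<\gamma\ll 1$, $0<t<T$ (shrinking $\tau$ if necessary). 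Thus at a maximum point $(t_0,x_0)$ of $Q_{\gamma\ve j}$ in $[\delta,T]$, using $0\leqslant\dl Q_{\gamma\ve j}$ and the mean-value inequality $\tr_{\omega_{\gamma\ve j}(t)}\omega_{\gamma\ve j}\geqslant n\bigl(\tfrac{\omega_{\gamma\ve j}^n}{\omega_{\gamma\ve j}(t)^n}\bigr)^{1/n}$ together with $\log\tfrac{\omega_{\gamma\ve j}(t)^n}{\omega_{\gamma\ve j}^n}=\dot\varphi_{\gamma\ve j}(t)-\psi_\gamma+O(1)$ (from \eqref{03200241}), one gets, after choosing $A$ large to beat $C_0$,
\begin{equation}\label{0229015}
\tr_{\omega_{\gamma\ve j}(t_0)}\omega_{\gamma\ve j}(x_0)\leqslant \frac{C}{t_0-\delta}+C,
\end{equation}
and hence $Q_{\gamma\ve j}(t_0,x_0)$ is bounded above by $C/(t_0-\delta)+C$ plus the (uniformly bounded, by Propositions \ref{0320029}, \ref{0322013}, \ref{0321001}, \ref{0321017}, Corollaries \ref{0405001}, \ref{0405003}) contributions of the non-Laplacian terms of $Q$. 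Third, evaluating $Q_{\gamma\ve j}(t,x)\leqslant Q_{\gamma\ve j}(t_0,x_0)$ at general $(t,x)$, using the uniform two-sided bounds on $\varphi_{\gamma\ve j}(t)-t\psi_\gamma$ and on $(t-\tau)(\dot\varphi_{\gamma\ve j}(t)-\psi_\gamma)$ (the latter: upper by Corollary \ref{0405001}, lower by Corollary \ref{0405003}), solving for $\log\tr_{\omega_{\gamma\ve j}(t)}\omega_{\gamma\ve j}$ and absorbing the $n\log(t-\delta)$ and the $\tfrac{C}{t_0-\delta}$ terms into a bound of the shape $C/t^2$ (the square appears because one must control both the explicit $\log(t-\delta)$ and the $1/(t_0-\delta)$ coming from \eqref{0229015}, and because $t-\tau$ may vanish requiring a further $1/t$ factor when clearing $(t-\tau)\dot\varphi$), gives $\log\tr_{\omega_{\gamma\ve j}(t)}\omega_{\gamma\ve j}\leqslant C/t^2$ on $[\delta,T]\times(X\setminus D)$. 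Finally I would let $\delta\to 0$, then $j\to\infty$ and $\ve\to 0$, using that the estimates are uniform in all these parameters and that $\omega_{\gamma\ve j}\to\omega_\gamma$, $\varphi_{\gamma\ve j}(t)\to\varphi_\gamma(t)$ in $\mathcal{C}^\infty_{loc}(X\setminus D)$, to obtain \eqref{0229012}.

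\textbf{Main obstacle.} The delicate point is the bookkeeping in the collected coefficient of the $\tr_{\omega_{\gamma\ve j}(t)}(\cdot)$ term: one has to check that the combination $\omega + \tau\sqrt{-1}\partial\bar{\partial}\psi_\gamma + (\text{remaining }t\text{-dependent forms})$ is bounded below by a \emph{uniform} (in $\gamma$ and in $t\in(0,T)$) positive multiple of $\omega_\gamma$, which forces a careful choice of $\tau$ and of the smallness parameter $\delta$ in \eqref{0320028} before $A$ is even fixed, and then a choice of $A$ large enough to dominate both the bisectional-curvature constant $C_0$ and the cross terms produced by $\Delta_{\omega_{\gamma\ve j}(t)}\psi_\gamma$. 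Equally, one must confirm that at the maximum point the term $\log\tfrac{\omega_{\gamma\ve j}(t)^n}{\omega_{\gamma\ve j}^n}$, which can a priori be very negative, is controlled from below using only the already-established Corollary \ref{0405003} lower bound $\sigma(\dot\varphi_\gamma(t)-\psi_\gamma)\geqslant n\log t-C$ — this is why the final bound degenerates like $1/t^2$ rather than $1/t$, and tracking that degeneration honestly through the inequalities is the part that needs the most care. The regularity of $\omega_{\gamma\ve j}$ near $D$ (so that the maximum principle applies on a genuinely smooth function, with the maximum automatically away from $D$ because $\tr_{\omega_{\gamma\ve j}(t)}\omega_{\gamma\ve j}$ stays bounded there while $-n\log(t-\delta)$ does nothing bad) also has to be invoked exactly as in Guenancia's setup.
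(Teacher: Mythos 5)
Your overall architecture is the paper's: Chern--Lu plus Guenancia's uniform bisectional curvature bound (Lemma \ref{032400466}) for the parabolic Schwarz inequality, an auxiliary function coupling $\log\tr_{\omega_\gamma(t)}\omega_\gamma$ with $-A(\varphi_\gamma(t)-t\psi_\gamma)$, a term with coefficient $t-\tau$ on $\dot\varphi_\gamma(t)-\psi_\gamma$ to manufacture a uniform $-A\tau\tr_{\omega_\gamma(t)}\omega_\gamma$, a $\log(t-\delta)$ barrier in time, and Corollaries \ref{0405001} and \ref{0405003} to control the potential terms at the maximum point. That is exactly the paper's strategy (the paper works directly on $X\setminus D$ with an $\epsilon\log|s|_h^2$ spatial barrier and the non-uniform bound \eqref{0324003} rather than on the $\ve,j$-regularization, but that difference is immaterial). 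However, the execution contains sign errors that break the mechanism you are trying to exploit, and the final $C/t^2$ is not actually derived.

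Concretely: (i) the sign of the $(t-\tau)$-term must be $+A(t-\tau)\bigl(\dot\varphi-\psi_\gamma\bigr)$, as in \eqref{0323001}. Then the $\dot\varphi$ and $\psi_\gamma$ zeroth-order contributions cancel against those of $-A(\varphi-t\psi_\gamma)$, and the trace contributions collect into $-A\,\tr_{\omega_\gamma(t)}\bigl(\omega+\tau\nu_\gamma+\tau\dd\psi_\gamma\bigr)\leqslant-A\tau\tr_{\omega_\gamma(t)}\omega_\gamma-\tfrac{A}{2}\tr_{\omega_\gamma(t)}\omega$. With your minus sign the collected form is $\omega+(2t-\tau)\nu_\gamma+(2t-\tau)\dd\psi_\gamma$: its $\dd\psi_\gamma$-coefficient vanishes at $t=\tau/2$ and is negative for $t<\tau/2$, so no uniform $-A\tau\tr_{\omega_\gamma(t)}\omega_\gamma$ appears and for small $t$ you actually \emph{add} a positive multiple of $\tr_{\omega_\gamma(t)}\dd\psi_\gamma$ to the bad Chern--Lu term; you are also left with an uncancelled $-2A(\dot\varphi-\psi_\gamma)$. (Your stated contribution of $-A\varphi$, namely $+A\tr_{\omega_\gamma(t)}(\omega_{\gamma t}+t\dd\psi_\gamma)$, is likewise wrong in sign: $-\Delta(-A\varphi)=A\Delta\varphi=An-A\tr_{\omega_\gamma(t)}\omega_{\gamma t}$.) (ii) The time barrier must be $+An\log(t-\delta)$, which tends to $-\infty$ as $t\to\delta^+$ and so pushes the maximum into $(\delta,T]$; your $-n\log(t-\delta)$ blows up to $+\infty$ there and does the opposite, and the coefficient must be $An$ (not $n$) so that at the maximum the combination with the $-\tfrac{A\tau}{\sigma}n\log t_0$ coming from Corollary \ref{0405003} is $(1-\tfrac{\tau}{\sigma})An\log(t_0-\delta)$ with nonnegative coefficient (this is where $\tau\leqslant\sigma$ is used). (iii) You omit the weight $(t-\delta)$ in front of $\log\tr_{\omega_\gamma(t)}\omega_\gamma$; this weight is what produces the quadratic rate, since the maximum-point analysis yields $(t-\delta)\log\tr_{\omega_\gamma(t)}\omega_\gamma\leqslant C/(t-\delta)+C$ (the $1/(t-\delta)$ on the right coming from the lower bound $-C/t$ on $-A(\varphi-t\psi_\gamma)+A(t-\tau)(\dot\varphi-\psi_\gamma)$ via Corollary \ref{0405001} when $t<\tau$), whence $\log\tr_{\omega_\gamma(t)}\omega_\gamma\leqslant C/(t-\delta)^2$ and then $\delta\to0$. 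Your heuristic for where the square comes from does not substitute for this step. With these three corrections your argument becomes the paper's proof.
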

\begin{proof}
For any $0<\delta<T$ and $0<\epsilon\ll1$, we define 
\begin{equation}\label{0323001}
L_\gamma(t)=\left(t-\delta\right)\log\tr_{\omega_{\gamma}(t)}\omega_{\gamma}-A\left(\varphi_\gamma(t)-t\psi_\gamma\right)+A(t-\tau)\left(\dot\varphi_{\gamma}(t)-\psi_\gamma\right)+An\log\left(t-\delta\right)+\epsilon\log|s|_h^2,
\end{equation}
where $A$ is a large positive uniform constant to be determined later, $\tau$ is a fixed uniform positive constant satisfying 
\begin{equation}\label{0323002}
(1-\tau)\omega+\tau\nu_\gamma\geqslant(1-\tau-C\tau)\omega\geqslant\frac{\omega}{2}\ \ \ \text{and}\ \ \ \tau\leqslant \sigma
\end{equation}
for all $0<\gamma\ll1$, and $\sigma$ is the constant in Corollary \ref{0405003}. 

First, by Chern-Lu inequality \cite{SSC, YCL} (see also Proposition $7.1$ in \cite{JMR}), in $M\setminus D$, we have
\begin{equation}\label{2018060801}
\Delta_{\omega_\gamma(t)}\log \tr_{\omega_{\gamma}(t)}\omega_{\gamma}\geqslant\frac{\left(Ric(\omega_\gamma(t)),\omega_\gamma\right)_{\omega_\gamma(t)}}{\tr_{\omega_{\gamma}(t)}\omega_{\gamma}}-C \tr_{\omega_{\gamma}(t)}\omega_{\gamma},
\end{equation}
where $(\ ,\ )_{\omega_\gamma(t)}$ is the inner product with respect to $\omega_\gamma(t)$ and the constant $C$ depends on the upper bound on the holomorphic bisectional curvature of $\omega_\gamma$ which is uniformly bounded by Lemma \ref{032400466}. In $M\setminus D$, we also have
\begin{equation}\label{2018060802}\frac{\partial}{\partial t}\log \tr_{\omega_{\gamma}(t)}\omega_{\gamma}=\frac{\left(Ric(\omega_\gamma(t))-\eta,\omega_\gamma\right)_{\omega_\gamma(t)}}{\tr_{\omega_{\gamma}(t)}\omega_{\gamma}}.
\end{equation}
By using $(\ref{2018060801})$ and $(\ref{2018060802})$, we have
\begin{equation}\label{0324004}
\left(\frac{\partial}{\partial t}-\Delta_{\omega_\gamma(t)}\right)\log \tr_{\omega_{\gamma}(t)}\omega_{\gamma}\leqslant C \tr_{\omega_{\gamma}(t)}\omega_{\gamma}+C,
\end{equation}
where constant $C$ independent of $\gamma$. Then direct computations combining with Corollary \ref{0405003} show that
\begin{equation}\label{0324005}
\begin{aligned}
\left(\frac{\partial}{\partial t}-\Delta_{\omega_\gamma(t)}\right) L_{\gamma}(t)&\leqslant\log\tr_{\omega_{\gamma}(t)}\omega_{\gamma}+C\tr_{\omega_{\gamma}(t)}\omega_{\gamma}-A\left(\dot\varphi_\gamma(t)-\psi_\gamma\right)+A\Delta_{\omega_\gamma(t)}\varphi_\gamma(t)+\frac{An}{t-\delta}\\
&\ \ \ -At\Delta_{\omega_\gamma(t)}\psi_\gamma+A\left(\dot\varphi_\gamma(t)-\psi_\gamma\right)+A(t-\tau)\tr_{\omega_{\gamma}(t)}\nu_\gamma+\epsilon\tr_{\omega_\gamma(t)}\theta\\
&\ \ \ +A(t-\tau)\Delta_{\omega_\gamma(t)}\psi_\gamma+C\\
&\leqslant\log\tr_{\omega_{\gamma}(t)}\omega_{\gamma}+C\tr_{\omega_{\gamma}(t)}\omega_{\gamma}+An-A\tr_{\omega_\gamma(t)}\omega_{\gamma t}-A\tau\Delta_{\omega_\gamma(t)}\psi_\gamma+\frac{An}{t-\delta}\\
&\ \ \ +A(t-\tau)\tr_{\omega_{\gamma}(t)}\nu_\gamma+\epsilon\tr_{\omega_\gamma(t)}\theta+C\\
&\leqslant\log\tr_{\omega_{\gamma}(t)}\omega_{\gamma}+C\tr_{\omega_{\gamma}(t)}\omega_{\gamma}+An+C+\epsilon\tr_{\omega_\gamma(t)}\theta+\frac{An}{t-\delta}\\
&\ \ \ -A\tr_{\omega_\gamma(t)}\left(\omega+\tau\nu_\gamma+\tau\dd\psi_\gamma\right).
\end{aligned}
\end{equation}
We first deal with the last term in above inequality. By \eqref{0323002}, there holds
\begin{equation}\label{0324006}
\omega+\tau\nu_\gamma+\tau\dd\psi_\gamma=(1-\tau)\omega+\tau\nu_\gamma+\tau\left(\omega+\dd\psi_\gamma\right)\geqslant \frac{\omega}{2}+\tau\omega_\gamma.
\end{equation}
Hence 
\begin{equation}\label{0324007}
\begin{aligned}
\left(\frac{\partial}{\partial t}-\Delta_{\omega_\gamma(t)}\right) L_{\gamma}(t)&\leqslant\log\tr_{\omega_{\gamma}(t)}\omega_{\gamma}+C\tr_{\omega_{\gamma}(t)}\omega_{\gamma}-A\tau\tr_{\omega_\gamma(t)}\omega_\gamma+\frac{C}{t-\delta}\\
&\ \ \ +\tr_{\omega_\gamma(t)}\left(\epsilon\theta-\frac{A}{2}\omega\right)\\
&\leqslant\log\tr_{\omega_{\gamma}(t)}\omega_{\gamma}-2\tr_{\omega_{\gamma}(t)}\omega_{\gamma}+\frac{C}{t-\delta}\\
&\leqslant-\tr_{\omega_{\gamma}(t)}\omega_{\gamma}+\frac{C}{t-\delta},
\end{aligned}
\end{equation}
where we choose $A$ large enough in the second inequality, and we use $-x+\log x\leqslant0$ on $(0,+\infty)$ in the last inequality.

Let $(t_0,x_0)$ be the maximum point of $L_\gamma(t)$ on $[\delta, T]\times \left(X\setminus D\right)$. From the authors' result ($(3.58)$ in \cite{JWLXZ4}), there exists a constant $C_\gamma$ such that for any $t\in\left[\delta,T\right]$, 
\begin{equation}\label{0324003}
\frac{1}{C_\gamma}\omega_\gamma\leqslant\omega_{\gamma}(t)\leqslant C_\gamma\omega_\gamma.
\end{equation}
Combining this with Proposition \ref{0322013}, Corollaries \ref{0405001} and \ref{0405003}, we have
\begin{equation}\label{0509001}
\begin{split}
L_\gamma(t)&\leqslant T\log\left(C_\gamma n\right)+An\log\left(t-\delta\right)-\frac{A\tau }{\sigma}n\log t+C+\epsilon\log|s|_h^2\\
&\leqslant T\log\left(C_\gamma n\right)+An\log\left(t-\delta\right)-\frac{A\tau }{\sigma}n\log\delta+C+\epsilon\log|s|_h^2.
\end{split}
\end{equation}
We conclude that $L_\gamma(t)$ tends to $-\infty$ as $x\to D$ or $x\to\delta$, hence the maximum point $(t_0,x_0)$ of $L_\gamma(t)$ must be in $(\delta,T]\times (X\setminus D)$. By the maximum principle, at $(t_0,x_0)$, we have
\begin{equation}\label{0324009}
\begin{split}
\tr_{\omega_{\gamma}(t)}\omega_{\gamma}\leqslant \frac{C}{t-\delta}+C,
\end{split}
\end{equation}
where $C$ is a constant depending only on $\|\varphi_0\|_{L^\infty(X)}$, $n$, $\omega$, $\theta$, $\eta$ and $T$. Then by using Propositions \ref{0320029} and \ref{0322013}, and Corollaries \ref{0405001} and \ref{0405003} again, since $\tau\leqslant\sigma$, we have
\begin{align}\label{0405005}\nonumber
&\ \ \ \left(t-\delta\right)\log\tr_{\omega_{\gamma}(t)}\omega_{\gamma}-\frac{C}{t-\delta}+An\log\left(t-\delta\right)+\epsilon\log|s|_h^2-C\\\nonumber
&\leqslant\left(t-\delta\right)\log\tr_{\omega_{\gamma}(t)}\omega_{\gamma}-\frac{C}{t}+An\log\left(t-\delta\right)+\epsilon\log|s|_h^2-C\\\nonumber
&\leqslant\left(t-\delta\right)\log\tr_{\omega_{\gamma}(t)}\omega_{\gamma}-A\left(\varphi_\gamma(t)-t\psi_\gamma\right)+A(t-\tau)\left(\dot\varphi_{\gamma}(t)-\psi_\gamma\right)+An\log\left(t-\delta\right)+\epsilon\log|s|_h^2\\
&\leqslant\left(t_0-\delta\right)\log\frac{C}{t_0-\delta}-\frac{A\tau}{\sigma}n\log t_0+An\log\left(t_0-\delta\right)+\epsilon\log|s|_h^2(x_0)+C\\\nonumber
&\leqslant-\left(t_0-\delta\right)\log(t_0-\delta)+\left(1-\frac{\tau}{\sigma}\right)An\log\left(t_0-\delta\right)+\epsilon\log|s|_h^2(x_0)+C\\\nonumber
&\leqslant\left(1-\frac{\tau}{\sigma}\right)An\log T+C,
\end{align}
and so
\begin{equation}\label{0324010}
\left(t-\delta\right)\log\tr_{\omega_{\gamma}(t)}\omega_{\gamma}\leqslant \frac{C}{t-\delta}+C-\epsilon\log|s|_h^2\ \ \ \text{in}\ \ \ X\setminus D,
\end{equation}
where $C$ is a constant depending only on $\|\varphi_0\|_{L^\infty(X)}$, $n$, $\omega$, $\theta$, $\eta$ and $T$. 
After letting $\epsilon\to0$, we have
\begin{equation}\label{0405006}
\log\tr_{\omega_{\gamma}(t)}\omega_{\gamma}\leqslant \frac{C}{(t-\delta)^2}\ \ \ \text{in}\ \ \ X\setminus D.
\end{equation}
Then we complete the proof after we let $\delta\to0$.
\end{proof}
\begin{rem}\label{0509005}
The constant $\tau$ in the definition of $L_\gamma(t)$ in \eqref{0323001} can be chosen to be equal to $\sigma$ in Corollary \ref{0405003}. In fact, by the choosing of $\sigma$ in \eqref{032103266}, we can also choose $\sigma$ satisfying \eqref{0323002}. 
\end{rem}
\begin{cor}\label{0325001}
For any $0<T<T^\eta_{\max}$, there exists a uniform constant $C$ depending only on $\|\varphi_0\|_{L^\infty(X)}$, $n$, $\omega$, $\theta$, $\eta$ and $T$ such that 
\begin{equation}\label{0325002}
e^{-\frac{C}{t^2}}\omega_\gamma\leqslant\omega_{\gamma}(t)\leqslant e^{\frac{C}{t^2}}\omega_{\gamma}\ \ \ \text{in}\ \ \ X\setminus D
\end{equation}
for all $t\in\left(0,T\right]$ and $0<\gamma\ll1$.
\end{cor}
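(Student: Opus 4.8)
The plan is to read off the two-sided metric bound \eqref{0325002} from the Laplacian estimate of Proposition \ref{0229011} together with the already-established $\mathcal{C}^0$-bounds on $\dot\varphi_\gamma(t)$, using only an elementary linear-algebra manipulation of the eigenvalues of $\omega_\gamma(t)$ relative to $\omega_\gamma$.

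First I would get the lower bound on $\omega_\gamma(t)$ essentially for free. Proposition \ref{0229011} gives $\tr_{\omega_\gamma(t)}\omega_\gamma\leqslant e^{C/t^2}$ in $X\setminus D$ for all $t\in(0,T]$ and $0<\gamma\ll1$, with $C$ uniform. Diagonalizing $\omega_\gamma$ with respect to $\omega_\gamma(t)$ with eigenvalues $\mu_1,\dots,\mu_n>0$, this means $\sum_i\mu_i\leqslant e^{C/t^2}$, so each $\mu_i\leqslant e^{C/t^2}$, i.e. $\omega_\gamma(t)\geqslant e^{-C/t^2}\omega_\gamma$, which is the left inequality in \eqref{0325002}. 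Next I would control the volume ratio: from the parabolic Monge-Amp\`ere equation \eqref{CC}, estimate \eqref{03200241} and the boundedness of $h$ one has $\big|\log\frac{\omega_\gamma(t)^n}{\omega_\gamma^n}-(\dot\varphi_\gamma(t)-\psi_\gamma)\big|\leqslant C$ in $X\setminus D$, and combining this with the upper bound $\dot\varphi_\gamma(t)-\psi_\gamma\leqslant\frac{\osc\limits_X\varphi_0}{t}+C$ from \eqref{0321003} yields $\frac{\omega_\gamma(t)^n}{\omega_\gamma^n}\leqslant e^{C/t}$ in $X\setminus D$, uniformly in $\gamma$ (the matching lower bound $\frac{\omega_\gamma(t)^n}{\omega_\gamma^n}\geqslant e^{-C/t}$, if wanted elsewhere, follows in the same way from Corollary \ref{0405003}).

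Finally I would combine the two. Writing $\lambda_i=\mu_i^{-1}$ for the eigenvalues of $\omega_\gamma(t)$ with respect to $\omega_\gamma$, Step 1 gives $\lambda_i\leqslant e^{C/t^2}$ for every $i$ and Step 2 gives $\prod_i\lambda_i\leqslant e^{C/t}$, so $\lambda_n\leqslant\big(\prod_i\lambda_i\big)\big(\prod_{i<n}\lambda_i\big)^{-1}\leqslant e^{C/t}e^{(n-1)C/t^2}\leqslant e^{C'/t^2}$; equivalently one may invoke the standard pointwise inequality $\tr_{\omega_\gamma}\omega_\gamma(t)\leqslant n\big(\tr_{\omega_\gamma(t)}\omega_\gamma\big)^{n-1}\tfrac{\omega_\gamma(t)^n}{\omega_\gamma^n}$ and plug in the bounds from Steps 1 and 2. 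Either way this gives $\omega_\gamma(t)\leqslant e^{C/t^2}\omega_\gamma$, the right inequality in \eqref{0325002}, and absorbing all constants into a single $C$ depending only on $\|\varphi_0\|_{L^\infty(X)}$, $n$, $\omega$, $\theta$, $\eta$ and $T$ finishes the proof. I do not expect any real obstacle in this corollary itself: all the substance sits in Proposition \ref{0229011} and the $\mathcal{C}^0$-estimates on $\dot\varphi_\gamma(t)$, and the only point requiring care is to verify that every constant entering these three steps is genuinely independent of $\gamma$, which it is since each estimate cited is uniform in $\gamma$.
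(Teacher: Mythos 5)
Your proposal is correct and follows essentially the same route as the paper: the lower bound is read off directly from the trace estimate of Proposition \ref{0229011}, the volume ratio is controlled by $e^{C/t}$ via \eqref{03200241} and the upper bound on $\dot\varphi_\gamma(t)-\psi_\gamma$, and the upper bound on the metric comes from the standard inequality $\tr_{\omega_\gamma}\omega_{\gamma}(t)\leqslant n\left(\frac{\omega_{\gamma}^n(t)}{\omega_\gamma^n}\right)\left(\tr_{\omega_\gamma(t)}\omega_\gamma\right)^{n-1}$, which is exactly \eqref{0325004} in the paper.
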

\begin{proof}
From Proposition \ref{0229011}, there exists a uniform constant $C$ depending only on $\|\varphi_0\|_{L^\infty(X)}$, $n$, $\omega$, $\theta$, $\eta$ and $T$ such that for any $t\in\left(0,T\right]$ and $0<l\ll1$,
\begin{equation}\label{0325003}
\tr_{\omega_{\gamma}(t)}\omega_{\gamma}\leqslant e^{\frac{C}{t^2}}\ \ \ \text{in}\ \ \ X\setminus D.
\end{equation}
Since
\begin{equation}\label{0325004}
\tr_{\omega_\gamma}\omega_{\gamma}(t)\leqslant n\left(\frac{\omega_{\gamma}^n(t)}{\omega_\gamma^n}\right)\left(\tr_{\omega_\gamma(t)}\omega_\gamma\right)^{n-1}
\end{equation}
and 
\begin{equation}\label{0325005}
\begin{split}
\frac{\omega_{\gamma}^n(t)}{\omega_\gamma^n}&=\exp\left(\dot\varphi_\gamma(t)-\log\frac{\omega_\gamma^n}{\omega^n}-h-(1-\gamma)\log|s|_h^2\right)\\
&=\exp\left(\dot\varphi_\gamma(t)-\psi_\gamma-\log\frac{\omega_\gamma^n}{\omega^n}-h-(1-\gamma)\log|s|_h^2+\psi_\gamma\right)\\
&\leqslant e^{\frac{C}{t}},
\end{split}
\end{equation}
where we use \eqref{03200241} and Corollary \ref{0405001} in the last inequality, there exists a uniform constant $C$ depending only on $\|\varphi_0\|_{L^\infty(X)}$, $n$, $\omega$, $\theta$, $\eta$ and $T$ such that for any $t\in\left(0,T\right]$ and $0<l\ll1$,
 \begin{equation}\label{0325006}
\tr_{\omega_\gamma}\omega_{\gamma}(t)\leqslant e^{\frac{C}{t^2}}\ \ \ \text{in}\ \ \ X\setminus D.
\end{equation}
Combining \eqref{0325003} and \eqref{0325006}, we obtain \eqref{0325002}.
\end{proof}

Therefore, for any $0<\delta<T<T^\eta_{\max}$ and $K\subset\subset X\setminus D$, there exists a uniform constant $C$ depending only on $\|\varphi_0\|_{L^\infty(X)}$, $n$, $\omega$, $\theta$, $\eta$, $\delta$, $T$ and $dist_{\omega}(K,D)$ such that 
\begin{equation}\label{0324011}
\omega\leqslant\omega_{\gamma}(t)\leqslant\omega\ \ \ \text{on}\ \ \ [\delta, T]\times K.
\end{equation}
Then Evans-Krylov-Safonov's estimates (see \cite{NVK}) imply the high order uniform estimates.
\begin{pro}\label{217} For any $0<\delta<T<T^\eta_{\max}$, $k\in\mathbb{N}^{+}$ and $K\subset\subset X\setminus D$, there exist uniform constants $C_k$ depending only on $\|\varphi_0\|_{L^\infty(X)}$, $k$, $n$, $\omega$, $\theta$, $\eta$, $\delta$, $T$ and $dist_{\omega}(K,D)$, such that for any $0<\gamma\ll1$,
\begin{equation}
\|\varphi_{\gamma}(t)\|_{C^{k}\left([\delta,T]\times K\right)}\leqslant C_k.
\end{equation}
\end{pro}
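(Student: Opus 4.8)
The plan is to deduce Proposition~\ref{217} from the classical interior parabolic regularity theory for the complex Monge-Amp\`ere flow \eqref{CC}, all bounds being made uniform in $\gamma$ by working only on a compact region that stays away from $D$. Fix $0<\delta<T<T^\eta_{\max}$ and $K\subset\subset X\setminus D$, and choose $0<\delta''<\delta'<\delta$ together with open sets $K\subset\subset K''\subset\subset K'\subset\subset X\setminus D$. On $K'$ the section $s$ satisfies $c\leqslant|s|_h^2\leqslant c^{-1}$ for a constant $c>0$ depending only on $dist_\omega(K,D)$; consequently $h$, $(1-\gamma)\log|s|_h^2$, the potential $\psi_\gamma$ (bounded via \eqref{0321014} and \eqref{0321016}) and the coefficients of $\omega_{\gamma t}$ are all bounded in $\mathcal{C}^k(K')$ for every $k$, \emph{uniformly} in $0<\gamma\ll1$; in particular $\omega_\gamma$ is uniformly equivalent to $\omega$ on $K'$.

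The first step is to collect the low-order data. From Propositions~\ref{0320029} and \ref{0322013}, together with the uniform bound on $\psi_\gamma$ over $K'$, one gets $\|\varphi_\gamma(t)\|_{L^\infty([\delta',T]\times K')}\leqslant C$; from Corollaries~\ref{0405001} and \ref{0405003} (again using $|\psi_\gamma|\leqslant C$ on $K'$) one gets $\|\dot\varphi_\gamma(t)\|_{L^\infty([\delta',T]\times K')}\leqslant C$. The essential ingredient is Corollary~\ref{0325001}: the bound \eqref{0325002} gives $e^{-C/(\delta')^2}\omega_\gamma\leqslant\omega_\gamma(t)\leqslant e^{C/(\delta')^2}\omega_\gamma$ on $[\delta',T]\times K'$, which combined with $\omega_\gamma\sim\omega$ on $K'$ yields the two-sided estimate \eqref{0324011}, i.e. $C^{-1}\omega\leqslant\omega_\gamma(t)\leqslant C\omega$ on $[\delta',T]\times K'$ with $C$ independent of $\gamma$. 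Since on $X\setminus D$ the flow \eqref{CC} reads $\partial_t\varphi_\gamma(t)=\log\frac{(\omega_{\gamma t}+\dd\varphi_\gamma(t))^n}{\omega^n}+h+(1-\gamma)\log|s|_h^2$, this makes \eqref{CC} a uniformly parabolic equation on $[\delta',T]\times K'$ with uniformly controlled ellipticity; moreover $\Delta_\omega\varphi_\gamma(t)=\tr_\omega\omega_\gamma(t)-\tr_\omega\omega_{\gamma t}$ is uniformly bounded, hence $\dd\varphi_\gamma(t)$ is uniformly bounded there.

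With uniform parabolicity, a uniform $\mathcal{C}^0$ bound, a uniform bound on $\dd\varphi_\gamma(t)$, and uniformly $\mathcal{C}^\infty$ data on $K'$, one applies the Evans-Krylov-Safonov estimate \cite{NVK} to the flow \eqref{CC} --- whose right-hand side is concave in the complex Hessian --- to obtain a uniform interior $\mathcal{C}^{2,\alpha}$ bound for $\varphi_\gamma(t)$ on $[\delta'',T]\times K''$. Then the linearization of \eqref{CC} has $\mathcal{C}^\alpha$ coefficients and uniformly smooth inhomogeneous terms on $K''$; differentiating the equation and iterating parabolic Schauder estimates gives uniform interior bounds $\|\varphi_\gamma(t)\|_{\mathcal{C}^{k,\alpha}}$ for every $k$, and in particular $\|\varphi_\gamma(t)\|_{\mathcal{C}^k([\delta,T]\times K)}\leqslant C_k$ with $C_k$ depending only on the listed quantities. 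The only point requiring care --- and the sole obstacle --- is uniformity in $\gamma$: the constants produced by parabolic regularity depend on the ellipticity and on the $\mathcal{C}^k$-norms of the coefficients and data, but by the choice of $K'$ these are all controlled independently of $\gamma$ (the dangerous $\gamma$-dependence coming from the conical singularity of $\omega_\gamma$ near $D$ is entirely avoided), while the $\mathcal{C}^0$-bounds on $\varphi_\gamma(t)$, $\dot\varphi_\gamma(t)$ and Corollary~\ref{0325001} are themselves uniform in $\gamma$ by Sections~\ref{Bou1} and \ref{Bou2}. Hence no new estimate is needed, and the conclusion follows from the classical interior parabolic theory.
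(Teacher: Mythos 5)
Your proposal is correct and follows essentially the same route as the paper: the paper likewise combines the uniform two-sided metric bound of Corollary \ref{0325001} with the uniform equivalence $\omega_\gamma\sim\omega$ on compact subsets of $X\setminus D$ to obtain \eqref{0324011}, and then invokes Evans--Krylov--Safonov estimates \cite{NVK} (with the standard Schauder bootstrap) for the higher-order bounds. Your write-up simply makes explicit the nested domains, the uniform smoothness of the data away from $D$, and the concavity of the operator, all of which the paper leaves implicit.
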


\section{Existence}\label{Exi}

In this section, we show the limit behavior flow of the conical K\"ahler-Ricci flow \eqref{CK} as its cone angle tends to zero. We prove the following convergence theorem.
\begin{thm}\label{0506001}
As the cone angle tends to $0$, there exists a subsequence ${\color{OrangeRed} (CKRF^\eta_{\gamma_i})}$ of the twisted conical K\"ahler-Ricci flow converging locally in the smooth sense outside the divisor and globally in the sense of currents to a K\"ahler-Ricci flow
\begin{equation}
\left\{
\begin{aligned}
 &\ \frac{\partial \omega(t)}{\partial t}=-{\rm Ric}(\omega(t))+[D]+\eta\\
 &\ \ \ \ \ \ \ \ \ \ \ \ \ \ \ \ \ \ \ \ \ \ \ \ \ \ \ \ \ \ \ \ \ \ \ \ \ \ \ \ , \\
 &\ \omega(t)|_{t=0}=\hat\omega
\end{aligned}
\right.\tag{$CKRF^{\eta}_{0}$}
\end{equation}
which exists on $[0,T^\eta_{\max})\times X$ in the following sense. 
\begin{itemize}
  \item For any $0<T<T^\eta_{\max}$, there exists a constant $C$ such that for any $t\in(0,T)$,
\begin{equation}\label{0509006}
e^{-\frac{C}{t^2}}\omega_o\leqslant\omega(t)\leqslant e^{\frac{C}{t^2}}\omega_o\ \ \ \text{in}\ \ \ X\setminus D;
\end{equation}
   \item  In $(0,T^\eta_{\max})\times(X\setminus D)$, $\omega(t)$ satisfies the smooth twisted K\"ahler-Ricci flow
 \begin{equation}\label{0331008}
\frac{\partial \omega(t)}{\partial t}=-{\rm Ric}(\omega(t))+\eta;
\end{equation}
  \item On $(0,T^\eta_{\max})\times X$, $\omega(t)$ satisfies K\"ahler-Ricci flow \eqref{PK} in the sense of currents;
  \item There exists a metric potential $\varphi(t)\in\mathcal{C}^{\infty}\left((0,T^\eta_{\max})\times(X\setminus D)\right)$ with respect to $\omega_t$ such that $\omega(t)=\omega_{t}+\dd\varphi(t)$,
  \begin{equation}\label{03310089}
  \lim\limits_{t\rightarrow0^{+}}\|\varphi(t)-\varphi_0\|_{L^{1}(X)}=0\ \ \ \text{\rm and}\ \ \ \lim\limits_{t\rightarrow0^{+}}\varphi(t)=\varphi_0\ \ \ \text{in}\ \ \ X\setminus D;
\end{equation}
\item On $[0,T^\eta_{\max})$, $\|\varphi(t)-t\psi_o\|_{L^\infty(X)}\leqslant C$;
\item For any $0<T<T^\eta_{\max}$, there exist constants $a$ and $C$ such that for any $t\in(0,T)$,
\begin{equation}\label{0406001}
-C+a\log t\leqslant\dot\varphi(t)-\psi_o\leqslant\frac{C}{t}.
\end{equation}
\end{itemize}
\end{thm}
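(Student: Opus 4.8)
The plan is to deduce Theorem \ref{0506001} from the $\gamma$-independent estimates of Sections \ref{Bou1} and \ref{Bou2} by a compactness and diagonal argument, passing everything to the limit $\gamma\searrow0$. First I would invoke Proposition \ref{217}: on each $[\delta,T]\times K$ with $0<\delta<T<T^\eta_{\max}$ and $K\subset\subset X\setminus D$ the potentials $\varphi_\gamma(t)$ are bounded in $\mathcal{C}^k$ uniformly in $\gamma$, so Arzel\`a--Ascoli together with a diagonal extraction over an exhaustion $\delta_m\searrow0$, $T_m\nearrow T^\eta_{\max}$, $K_m\nearrow X\setminus D$ produces a subsequence $\gamma_i\searrow0$ and a function $\varphi\in\mathcal{C}^\infty\left((0,T^\eta_{\max})\times(X\setminus D)\right)$ with $\varphi_{\gamma_i}\to\varphi$ in $\mathcal{C}^\infty_{loc}$. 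Since $\nu_{\gamma_i}=\nu-\gamma_i\theta\to\nu$ smoothly, $\omega_{\gamma_i t}=\omega+t\nu_{\gamma_i}\to\omega_t$ smoothly; since $\psi_{\gamma_i}\searrow\psi_o$ and $\omega_{\gamma_i}\to\omega_o$ in $\mathcal{C}^\infty_{loc}(X\setminus D)$ by \eqref{0328001}, it follows that $\omega_{\gamma_i}(t)=\omega_{\gamma_i t}+\dd\varphi_{\gamma_i}(t)\to\omega(t):=\omega_t+\dd\varphi(t)$ in $\mathcal{C}^\infty_{loc}(X\setminus D)$, and, being a limit of $\omega_{\gamma_i t}$-plurisubharmonic functions uniformly bounded above, $\varphi(t)$ is $\omega_t$-plurisubharmonic on all of $X$.

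Next I would pass the equations and estimates to the limit over $\gamma_i$. On $(0,T^\eta_{\max})\times(X\setminus D)$ the smooth twisted flow satisfied by each $\omega_{\gamma_i}(t)$ passes to \eqref{0331008} for $\omega(t)$, equivalently \eqref{CC} with $\gamma=\gamma_i$ passes to the parabolic Monge--Amp\`ere equation \eqref{PC}; letting $i\to\infty$ in Corollary \ref{0325001} gives \eqref{0509006}; letting $i\to\infty$ in Propositions \ref{0320029} and \ref{0322013}, using $\psi_{\gamma_i}\to\psi_o$, gives $\|\varphi(t)-t\psi_o\|_{L^\infty(X)}\leqslant C$ on $[0,T^\eta_{\max})$; letting $i\to\infty$ in Corollaries \ref{0405001} and \ref{0405003} gives \eqref{0406001} with $a=n/\sigma$.

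For the global statements I would work at a fixed $t>0$. By \eqref{03200241} and Corollary \ref{0325001} the quantity $\dot\varphi_{\gamma_i}(t)-\psi_{\gamma_i}=\log\frac{\omega_{\gamma_i}^n(t)}{\omega_{\gamma_i}^n}+O(1)$ is bounded in $L^\infty(X\setminus D)$ uniformly in $i$, so by dominated convergence $\dot\varphi_{\gamma_i}(t)-\psi_{\gamma_i}\to\dot\varphi(t)-\psi_o$ in $L^1(X)$; since $\psi_o\leqslant\psi_{\gamma_i}\leqslant\psi_1$ with $\psi_o,\psi_1\in L^1(X)$, also $\psi_{\gamma_i}\to\psi_o$ in $L^1(X)$, hence $\dot\varphi_{\gamma_i}(t)\to\dot\varphi(t)$ in $L^1(X)$; the analogous argument with Propositions \ref{0320029} and \ref{0322013} gives $\varphi_{\gamma_i}(t)\to\varphi(t)$ in $L^1(X)$. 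Therefore $\dd\varphi_{\gamma_i}(t)\to\dd\varphi(t)$ and $\dd\dot\varphi_{\gamma_i}(t)\to\dd\dot\varphi(t)$ as currents, and, since $(1-\gamma_i)[D]\to[D]$ and $\nu_{\gamma_i}\to\nu$, the identity \eqref{CK} --- satisfied in the sense of currents by each $\omega_{\gamma_i}(t)$ --- passes to \eqref{PK} for $\omega(t)$ in the sense of currents, with $\mathrm{Ric}(\omega(t))$ read off from \eqref{PC}; in particular $\omega_{\gamma_i}(t)\to\omega(t)$ as currents.

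It remains to verify the behavior as $t\to0^+$, namely \eqref{03310089}, and I expect this to be the main obstacle: every estimate above degenerates at $t=0$ (the Laplacian bound is $e^{C/t^2}$, the lower bound for $\dot\varphi_\gamma(t)-\psi_\gamma$ is $\sim\log t$), and for each fixed $\gamma$ the initial behavior of $\varphi_\gamma(t)$ genuinely depends on $\gamma$, so one cannot simply interchange $\gamma\to0$ and $t\to0$. My plan here is to bracket $\varphi_\gamma(t)$, uniformly in $\gamma$, between an explicit sub-solution and super-solution of \eqref{CC} built from $\varphi_0$, $\psi_\gamma$ and elementary functions of $t$; the improved uniform lower bound with $\varphi_0$ in place of $\inf_X\varphi_0$ of the forthcoming Proposition \ref{0329010} (cf. Remark \ref{0515010}) is precisely what is needed on the sub-solution side, and the comparison relies on the maximum principles of Section \ref{Preliminaries} adapted to the singularity along $D$. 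Checking that both bracketing solutions tend to $\varphi_0$ in $L^1(X)$ and in $\mathcal{C}^\infty_{loc}(X\setminus D)$ at a rate independent of $\gamma$, and then passing to the limit in $i$, yields $\varphi(t)\to\varphi_0$ in both senses as $t\to0^+$; collecting the items above completes the proof.
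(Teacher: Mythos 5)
Your proposal follows essentially the same route as the paper: diagonal extraction from Proposition \ref{217}, passing the uniform estimates of Corollaries \ref{0325001}, \ref{0405001}, \ref{0405003} and Propositions \ref{0320029}, \ref{0322013} to the limit, a dominated-convergence/integration-by-parts argument for the equation in the sense of currents, and sub/super-solution bracketing near $t=0$ with the improved lower bound of Proposition \ref{0329010}. The only implementation detail you leave vague is the upper bracket, which in the paper is not an explicit elementary super-solution but the regularized smooth flow $\varphi_{\ve j}(t)$ of \eqref{SC}, compared to $\varphi_\gamma(t)+t\gamma\log|s|_h^2$ via the maximum principle (Proposition \ref{0329001}); this is a minor gap of execution, not of strategy.
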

\begin{proof}
For any $[\delta, T]\times K\subset\subset (0,T^\eta_{\max})\times (X\setminus D)$, it is easy to see that $[\delta, T]\times K\subset\subset (0,T^\eta_{\gamma,\max})\times (X\setminus D)$ for $0<\gamma\ll1$. Then for any $k\in\mathbb{N}^{+}$, $\|\varphi_{\gamma}(t)\|_{C^{k}([\delta,T]\times K)}$ is uniformly bounded according to Proposition \ref{217}. Let $\delta$ approximate to $0$, $T$ approximate to $T^\eta_{\max}$ and $K$ approximate to $X\setminus D$, by diagonal rule, we get a sequence $\{\gamma_i\}$, such that $\varphi_{\gamma_i}(t)$ converges in $C^\infty_{loc}$-topology in $ (0,T^\eta_{\max})\times (X\setminus D)$ to a function $\varphi(t)$ which is smooth in $(0,T^\eta_{\max})\times (X\setminus D)$ and satisfies equation
\begin{equation}\label{223}  
\frac{\partial \varphi(t)}{\partial t}=\log\frac{(\omega_{t}+\sqrt{-1}\partial\bar{\partial}\varphi(t))^{n}}{\omega^{n}}+h+\log|s|_{h}^{2}.
\end{equation}
Combining these arguments with Corollary \ref{0325001}, for any $0<T<T^\eta_{\max}$, there exists a uniform constant $C$ depending only on $\|\varphi_0\|_{L^\infty(X)}$, $n$, $\omega$, $\theta$, $\eta$ and $T$ such that for any $t\in(0,T]$,
\begin{equation}\label{221}
e^{-\frac{C}{t^2}}\omega_o\leqslant\omega(t)=\omega_t+\dd\varphi(t)\leqslant e^{\frac{C}{t^2}}\omega_o\ \ \text{in}\ X\setminus D.
\end{equation}
Hence we prove the first two conditions, and the last two conditions follow from Propositions \ref{0320029} and \ref{0322013}, and Corollaries \ref{0405001} and \ref{0405003}.

Next, we prove that $\omega(t)$ satisfies equation \eqref{PK} in the sense of currents on $(0,T^\eta_{\max})\times X$. Let $\zeta:=\zeta(t,x)$ be a smooth $(n-1,n-1)$-form with compact support in $(0,T^\eta_{\max})\times X$. Without loss of generality, we assume that its compact support is included in $(\delta,T)\times X$ ($0<\delta<T<T^\eta_{\max}$). 

By using the definitions \eqref{0320018}, \eqref{0328001} and the Dominated convergence theorem, we have
\begin{equation}\label{0328002}
\int_{X}\dd\psi_\gamma\wedge\zeta\xrightarrow{\gamma\rightarrow 0}\int_{X}\dd\psi_o\wedge\zeta.
\end{equation}
On $[\delta,T]\times (X\setminus D)$, since $\log\frac{\omega_{\gamma}^{n}(t)}{\omega^{n}}+(1-\gamma)\log|s|_h^2-\psi_\gamma$, $\log\frac{\omega^{n}(t)}{\omega^{n}}+\log|s|_h^2-\psi_o$, $\varphi_{\gamma}(t)-t\psi_\gamma$ and $\varphi(t)-t\psi_o$ are uniformly bounded, by using the Dominated convergence theorem and integration by parts, we have
\begin{equation}\label{201503201}
\begin{split}
&\ \int_{X}\frac{\partial\omega_{\gamma}(t)}{\partial t}\wedge\zeta=\int_{X}\left(\nu_\gamma+\sqrt{-1}\partial\bar{\partial}\frac{\partial\varphi_{\gamma}(t)}{\partial t}\right)\wedge\zeta\\
&=\int_{X}\dd\left(\frac{\partial\varphi_{\gamma}(t)}{\partial t}-\psi_\gamma\right)\wedge\zeta+\int_{X}\dd\psi_\gamma\wedge\zeta+\int_X\nu_\gamma\wedge\zeta\\
&=\int_{X}\left(\frac{\partial\varphi_{\gamma}(t)}{\partial t}-\psi_\gamma\right)\dd\zeta+\int_{X}\dd\psi_\gamma\wedge\zeta+\int_X\nu_\gamma\wedge\zeta\\
&=\int_{X}\left(\log\frac{\omega_{\gamma}^{n}(t)}{\omega^{n}}+(1-\gamma)\log|s|_h^2+h-\psi_\gamma\right)\sqrt{-1}\partial\bar{\partial}\zeta+\int_{X}\dd\psi_\gamma\wedge\zeta+\int_X\nu_\gamma\wedge\zeta\\
&\xrightarrow{\gamma\rightarrow 0}\int_{X}\left(\log\frac{\omega^{n}(t)}{\omega^{n}}+\log|s|_h^2+h-\psi_o\right)\sqrt{-1}\partial\bar{\partial}\zeta+\int_{X}\dd\psi_o\wedge\zeta+\int_X\nu\wedge\zeta\\
&=\int_{X}\dd\left(\log\frac{\omega^{n}(t)}{\omega^{n}}+\log|s|_h^2+h-\psi_o\right)\wedge\zeta+\int_{X}\dd\psi_o\wedge\zeta+\int_X\nu\wedge\zeta\\
&=\int_{X} (-{\rm Ric} (\omega(t))+[D]+\eta)\wedge\zeta.
\end{split}
\end{equation}
At the same time, there also holds
\begin{equation}\label{201503202}
\begin{split}
\int_X \omega_{\gamma}(t)\wedge\frac{\partial\zeta}{\partial t}&=\int_X \left(\omega+t\nu_\gamma\right)\wedge\frac{\partial\zeta}{\partial t}+\int_X \dd\varphi_{\gamma}(t)\wedge\frac{\partial\zeta}{\partial t}\\
&=\int_X \left(\omega+t\nu_\gamma\right)\wedge\frac{\partial\zeta}{\partial t}+\int_X \dd\left(\varphi_{\gamma}(t)-t\psi_\gamma\right)\wedge\frac{\partial\zeta}{\partial t}+t\int_X \dd\psi_\gamma\wedge\frac{\partial\zeta}{\partial t}\\
&=\int_X \left(\omega+t\nu_\gamma\right)\wedge\frac{\partial\zeta}{\partial t}+\int_X\left(\varphi_{\gamma}(t)-t\psi_\gamma\right)\dd\frac{\partial\zeta}{\partial t}+t\int_X \dd\psi_\gamma\wedge\frac{\partial\zeta}{\partial t}\\
&\xrightarrow{\gamma\rightarrow 0}\int_X\left(\omega+t\nu\right)\wedge\frac{\partial\zeta}{\partial t}+\int_X\left(\varphi(t)-t\psi_o\right)\dd\frac{\partial\zeta}{\partial t}+t\int_X \dd\psi_o\wedge\frac{\partial\zeta}{\partial t}\\
&=\int_X\left(\omega+t\nu\right)\wedge\frac{\partial\zeta}{\partial t}+\int_X\dd\left(\varphi(t)-t\psi_o\right)\wedge\frac{\partial\zeta}{\partial t}+t\int_X \dd\psi_o\wedge\frac{\partial\zeta}{\partial t}\\
&=\int_X \omega(t)\wedge\frac{\partial\zeta}{\partial t}.
\end{split}
\end{equation}
On the other hand,
\begin{align}\label{201503203}\nonumber
\frac{\partial}{\partial t}\int_X \omega_{\gamma}(t)\wedge\zeta
&=\int_X\omega_\gamma(t)\wedge\frac{\partial\zeta}{\partial t}+\int_X\left(\nu_\gamma+\dd\frac{\partial\varphi_{\gamma}(t)}{\partial t}\right)\wedge\zeta\\\nonumber
&=\int_X\omega_{\gamma t}\wedge\frac{\partial\zeta}{\partial t}+\int_X \dd\left(\varphi_{\gamma}(t)-t\psi_\gamma\right)\wedge\frac{\partial\zeta}{\partial t}+t\int_X\dd\psi_\gamma\wedge\frac{\partial\zeta}{\partial t}\\\nonumber
&\ \ \ +\int_X\nu_\gamma\wedge\zeta+\int_X\dd\left(\frac{\partial\varphi_{\gamma}(t)}{\partial t}-\psi_\gamma\right)\wedge\zeta+\int_X\dd\psi_\gamma\wedge\zeta\\\nonumber
&=\int_X\omega_{\gamma t}\wedge\frac{\partial\zeta}{\partial t}+\int_X \left(\varphi_{\gamma}(t)-t\psi_\gamma\right)\dd\frac{\partial\zeta}{\partial t}+t\int_X\dd\psi_\gamma\wedge\frac{\partial\zeta}{\partial t}\\ \nonumber
&\ \ \ +\int_X\nu_\gamma\wedge\zeta+\int_X\left(\frac{\partial\varphi_{\gamma}(t)}{\partial t}-\psi_\gamma\right)\dd\zeta+\int_X\dd\psi_\gamma\wedge\zeta\\ 
&\xrightarrow{\gamma\rightarrow 0}\int_X\omega_{t}\wedge\frac{\partial\zeta}{\partial t}+\int_X \left(\varphi(t)-t\psi_o\right)\dd\frac{\partial\zeta}{\partial t}+t\int_X\dd\psi_o\wedge\frac{\partial\zeta}{\partial t}\\\nonumber
&\ \ \ +\int_X\nu\wedge\zeta+\int_X\left(\frac{\partial\varphi(t)}{\partial t}-\psi_o\right)\dd\zeta+\int_X\dd\psi_o\wedge\zeta\\\nonumber
&=\int_X\omega_{t}\wedge\frac{\partial\zeta}{\partial t}+\int_X \dd\left(\varphi(t)-t\psi_o\right)\wedge\frac{\partial\zeta}{\partial t}+t\int_X\dd\psi_o\wedge\frac{\partial\zeta}{\partial t}\\\nonumber
&\ \ \ +\int_X\nu\wedge\zeta+\int_X\dd\left(\frac{\partial\varphi(t)}{\partial t}-\psi_o\right)\wedge\zeta+\int_X\dd\psi_o\wedge\zeta\\\nonumber
&=\int_X\omega(t)\wedge\frac{\partial\zeta}{\partial t}+\int_X \frac{\partial\omega(t)}{\partial t}\wedge\frac{\partial\zeta}{\partial t}\\\nonumber
&=\frac{\partial}{\partial t}\int_X \omega(t)\wedge\zeta.
\end{align}
Combining equality
\begin{equation}\label{0328003}
\frac{\partial}{\partial t}\int_X \omega_{\gamma}(t)\wedge\zeta=\int_{X}\frac{\partial\omega_{\gamma}(t)}{\partial t}\wedge\zeta+\int_X \omega_{\gamma}(t)\wedge\frac{\partial\zeta}{\partial t}
\end{equation}
with equalities $(\ref{201503201})$-$(\ref{201503203})$, on $[\delta,T]$, we have
\begin{equation}\label{201503204}
\frac{\partial}{\partial t}\int_X \omega(t)\wedge\zeta=\int_{X} \left(-{\rm Ric} (\omega(t))+[D]+\eta\right)\wedge\zeta+\int_X \omega(t)\wedge\frac{\partial\zeta}{\partial t}.
\end{equation}
Since ${\rm Supp}\zeta\subset(\delta,T)\times X$, we have 
\begin{equation}\label{2015032040909}
\int_0^{T^\eta_{\max}}\frac{d}{d t}\int_X \omega(t)\wedge\zeta\ dt=0.
\end{equation}
Then integrating form $0$ to $\infty$ on both sides of equation $(\ref{201503204})$,
\begin{equation*}
\begin{split}
\int_{(0,T^\eta_{\max})\times X} \frac{\partial \omega(t)}{\partial t}\wedge\zeta~dt
&=-\int_{(0,T^\eta_{\max})\times X} \omega(t)\wedge\frac{\partial\zeta}{\partial t}~dt=-\int_{0}^{T^\eta_{\max}}\int_X \omega(t)\wedge\frac{\partial\zeta}{\partial t}~dt\\
&=\int_{0}^{T^\eta_{\max}}\int_{X} \left(-{\rm Ric} (\omega(t))+ [D]+\eta\right)\wedge\zeta~dt\\
&=\int_{(0,T^\eta_{\max})\times X} \left(-{\rm Ric} (\omega(t))+[D]+\eta\right)\wedge\zeta~dt.
\end{split}
\end{equation*}
By the arbitrariness of $\zeta$, we prove that $\omega(t)$ satisfies K\"ahler-Ricci flow \eqref{PK} in the sense of currents on $(0,T^\eta_{\max})\times X$. We complete the proof of the third condition.

At last, we prove $\varphi(t)$ we just obtained converges to $\varphi_0$ globally in $L^1$-sense and point-wise in $X\setminus D$ as $t\to0$. In \cite{JWLXZ4}, the authors and Zhang prove that  
\begin{equation}\label{0516005}
\lim\limits_{t\rightarrow0^{+}}\|\varphi_\gamma(t)-\varphi_{0}\|_{L^{1}(X)}=0\ \ \ \text{and}\ \ \ \lim\limits_{t\rightarrow0^{+}}\varphi_\gamma(t)=\varphi_{0}\ \ \text{in}\ \ X\setminus D.
\end{equation}
But all these limits depend on $\gamma$, so we can not let $\gamma$ tends to $0$ directly. We need some new arguments to show the limit behavior of $\varphi(t)$ near $t=0$. More precisely, our method is to construct super-solution and sub-solution and then to study the limit behavior of these solutions near $t=0$. We first give an upper bound on $\varphi(t)$. Fix $\ve>0$, we consider the smooth parabolic Monge-Amp\`ere equation
\begin{equation}\label{SC}
\left\{
\begin{aligned}
 &\ \frac{\partial \varphi_{\ve j}(t)}{\partial t}=\log\frac{\left(\omega_{t}+\dd\varphi_{\ve j}(t)\right)^n}{\omega^n}+h+\log\left(\ve^2+|s|_h^2\right)\\
 &\ \ \ \ \ \ \ \ \ \ \ \ \ \ \ \ \ \ \ \ \ \ \ \ \ \ \ \ \ \ \ \ \ \ \ \ \ \ \ \ \ \ \ \ \ \ \ \ \ \ \ \ \ \ \ \ \ \ \ \ \ \ \ \ \ \ \ \ \ \ \ \ \ \ \ \ \ ,\\
 &\ \varphi_{\ve j}(t)|_{t=0}=\varphi_{j}
\end{aligned}
\right.
\end{equation}
where $\varphi_j$ is the smooth strictly $\omega$-psh function decreasing to $\varphi_0$ as $j\nearrow\infty$ in \eqref{0220002}. From Tsuji \cite{Tsuji} and Tian-Zhang's \cite{Tianzzhang} results, there exists a unique smooth solution $\varphi_{\ve j}(t)$ on $[0,T]\times X$ with $0<T<T^\eta_{\max}$ for all $j$.
\begin{pro}\label{0329001}
In $(0,T]\times (X\setminus D)$, we have
\begin{equation}\label{0329002}
\varphi_\gamma(t)+t\gamma\log|s|_h^2\leqslant\varphi_{\ve j}(t)
\end{equation}
for all $0<\gamma\ll1$.
\end{pro}
\begin{proof}
For any $0<\delta<T$, we consider the function
\begin{equation}\label{0329003}
Z(t)=\varphi_\gamma(t)+t\gamma\log|s|_h^2-\varphi_{\ve j}(t)\ \ \ \text{on}\ \ \ [\delta,T]\times X.
\end{equation}
Direct computations show that, in $X\setminus D$,
\begin{equation}\label{0329004}
\begin{split}
\frac{\partial}{\partial t}Z(t)&=\log\frac{\left(\omega_t+\dd\left(\varphi_\gamma(t)+t\gamma\log|s|_h^2\right)\right)^n}{\left(\omega_t+\dd\varphi_{\ve j}(t)\right)^n}+\log|s|_h^2-\log\left(\ve^2+|s|_h^2\right)\\
&\leqslant\log\frac{\left(\omega_t+\dd\left(\varphi_\gamma(t)+t\gamma\log|s|_h^2\right)\right)^n}{\left(\omega_t+\dd\varphi_{\ve j}(t)\right)^n}\\
&=\log\frac{\left(\omega_t+\dd\varphi_{\ve j}(t)+\dd Z(t)\right)^n}{\left(\omega_t+\dd\varphi_{\ve j}(t)\right)^n}.
\end{split}
\end{equation}
Let $(t_0,x_0)$ be the maximum point of $Z(t)$ on $[\delta,T]\times X$. Since $Z(t)$ tends to $-\infty$ as $x\to D$, $x_0$ must be in $X\setminus D$. Then by the maximum principle $t_0=\delta$, which implies that
\begin{equation}\label{0329005}
\varphi_\gamma(t)+t\gamma\log|s|_h^2-\varphi_{\ve j}(t)\leqslant\sup\limits_X\left(\varphi_\gamma(\delta)+\delta\gamma\log|s|_h^2-\varphi_{\ve j}(\delta)\right)\leqslant\sup\limits_X\left(\varphi_\gamma(\delta)-\varphi_{\ve j}(\delta)\right).
\end{equation}
It follow from Hartogs' Lemma that, as $\delta\to0$,
\begin{equation}\label{0329006}
\sup\limits_X\left(\varphi_\gamma(\delta)-\varphi_{\ve j}(\delta)\right)\to\sup\limits_X\left(\varphi_0-\varphi_{j}\right)\leqslant0,
\end{equation}
we obtain \eqref{0329002} after letting  $\delta\to0$ in \eqref{0329005}.
\end{proof}
We obtain an upper bound for $\varphi(t)$ if we let $\gamma\to0$ in \eqref{0329002}.
\begin{cor}\label{0329007}
In $(0,T]\times (X\setminus D)$, we have
\begin{equation}\label{0329008}
\varphi(t)\leqslant\varphi_{\ve j}(t).
\end{equation}
\end{cor}
For the lower bound on $\varphi(t)$, we construct a sub-solution to derive it.  
\begin{pro}\label{0329010}
There exists a uniform constant $l\geqslant1$ depending only on $\eta$, $\theta$ and $\omega$, and a uniform constant $C$ depending only on $\|\varphi_0\|_{L^\infty(X)}$, $n$, $\omega$, $\theta$, $\eta$ and $T$ such that 
\begin{equation}\label{0329011}
\varphi_0+t\psi_\gamma+n(t\log t-t)-Ct\leqslant\varphi_\gamma(t)\ \ \ \text{in}\ \ \ X\setminus D
\end{equation}
for all $t\in(0,\frac{1}{2l})$ and $0<\gamma\ll1$.
\end{pro}
\begin{proof}
Fix $l\geqslant1$ such that
\begin{equation}\label{0329012}
\frac{1}{2l-1}< T\ \ \ \text{and}\ \ \ (2l-1)\omega+\nu_\gamma\geqslant\frac{\omega}{2}
\end{equation}
for all $0<\gamma\ll1$, and fix $0<t_0\ll1$ such that
\begin{equation}\label{03290131}
(2l-1)\omega+(1+2lt_0)\nu_\gamma\geqslant0
\end{equation}
and 
\begin{equation}\label{0329014}
\omega+\left(1+2lt_0\right)\dd\psi_\gamma=\omega_\gamma+2lt_0\dd\psi_\gamma\geqslant\frac{1}{2}\omega_\gamma.
\end{equation}

We consider equation
\begin{equation}\label{0229032}
(\omega+\dd u_{\gamma t_0})^n=e^{u_{\gamma t_0}-h-2l\varphi_\gamma(t_0)}\frac{\omega^n}{\vert s\vert_h^{2(1-\gamma)}}.
\end{equation}
It follows from Ko{\l}odziej's results \cite{K000} that there exists a unique continuous solution $u_{\gamma t_0}$ to above equation. Now we write this equation as 
\begin{equation}\label{0329009}
\begin{split}
&\ \ \ \left(\omega+\left(1+2lt_0\right)\dd\psi_\gamma+\dd \left(u_{\gamma t_0}-\left(1+2lt_0\right)\psi_\gamma\right)\right)^n\\
&=e^{\left(u_{\gamma t_0}-\left(1+2lt_0\right)\psi_\gamma\right)-h-2l\left(\varphi_\gamma(t_0)-t_0\psi_\gamma\right)+\psi_\gamma+\log\frac{\omega^n}{\omega_\gamma^n}-(1-\gamma)\log|s|_h^2}\omega_\gamma^n.
\end{split}
\end{equation}

We first prove that there exists a uniform constant $C$ depending only on $\|\varphi_0\|_{L^\infty(X)}$, $n$, $\omega$, $\theta$, $\eta$ and $T$ such that 
\begin{equation}\label{0330001}
\|u_{\gamma t_0}-\left(1+2lt_0\right)\psi_\gamma\|_{L^\infty(X)}\leqslant C.
\end{equation}
For any $\epsilon>0$, we consider function 
\begin{equation}\label{0330002}
F(x)=u_{\gamma t_0}-\left(1+2lt_0\right)\psi_\gamma-\epsilon\log|s|_h^2
\end{equation}
whose minimum point $x_0$ must be in $X\setminus D$ since $F(x)$ tends to $+\infty$ as $x\to D$. At point $x=x_0$, by the maximum principle and \eqref{0329014} we have
\begin{equation}\label{0330003}
\begin{split}
&\ \ \ \left(\omega+\left(1+2lt_0\right)\dd\psi_\gamma+\dd \left(u_{\gamma t_0}-\left(1+2lt_0\right)\psi_\gamma\right)\right)^n\\
&=\left(\omega+\left(1+2lt_0\right)\dd\psi_\gamma-\epsilon\theta+\dd F(x_0)\right)^n\\
&\geqslant\left(\omega+\left(1+2lt_0\right)\dd\psi_\gamma-\epsilon\theta\right)^n\\
&\geqslant\left(\frac{1}{2}\omega_\gamma-\epsilon\theta\right)^n\geqslant \frac{1}{4^n}\omega_\gamma^n,
\end{split}
\end{equation}
and by \eqref{03200241} and Proposition \ref{0321017},
\begin{equation}\label{0330004}
\begin{split}
&\ \ \ e^{\left(u_{\gamma t_0}-\left(1+2lt_0\right)\psi_\gamma\right)-h-2l\left(\varphi_\gamma(t_0)-t_0\psi_\gamma\right)+\psi_\gamma+\log\frac{\omega^n}{\omega_\gamma^n}-(1-\gamma)\log|s|_h^2}\omega_\gamma^n\\
&\leqslant e^{\left(u_{\gamma t_0}-\left(1+2lt_0\right)\psi_\gamma\right)+C}\omega_\gamma^n.
\end{split}
\end{equation}
Hence there exists a uniform constant $C$ depending only on $\|\varphi_0\|_{L^\infty(X)}$, $n$, $\omega$, $\theta$, $\eta$ and $T$ such that
\begin{equation}\label{0330005}
u_{\gamma t_0}(x_0)-\left(1+2lt_0\right)\psi_\gamma(x_0)\geqslant C\ \ \ \text{on}\ \ \ X,
\end{equation}
which implies
\begin{equation}\label{0330006}
u_{\gamma t_0}-\left(1+2lt_0\right)\psi_\gamma-\epsilon\log|s|_h^2\geqslant C\ \ \ \text{on}\ \ \ X,
\end{equation}
and
\begin{equation}\label{0330007}
u_{\gamma t_0}-\left(1+2lt_0\right)\psi_\gamma\geqslant C\ \ \ \text{on}\ \ \ X
\end{equation}
 after we let $\epsilon\to0$. By similar arguments we can also obtain an upper bound on $u_{\gamma t_0}-\left(1+2lt_0\right)\psi_\gamma$.

Next, we set 
\begin{equation}\label{0330008}
M(t)=(1-2lt)\varphi_\gamma(t_0)+tu_{\gamma t_0}+n(t\log t-t).
\end{equation}
Since $\varphi_\gamma(t_0)$ and $u_{\gamma t_0}$ are continuous, 
\begin{equation}\label{0330009}
\lim\limits_{t\to0^+}\|M(t)-\varphi_\gamma(t_0)\|_{L^\infty(X)}=0.
\end{equation}
For $0\leqslant t\leqslant\frac{1}{2l}-t_0$, by using \eqref{03290131}, we have 
\begin{equation}\label{0330010}
\begin{split}
\omega_{\gamma(t+t_0)}+\dd M(t)&=\omega+(t+t_0)\nu_\gamma+(1-2lt)\dd\varphi_\gamma(t_0)+t\dd u_{\gamma  t_0}\\
&=(1-2lt)\omega_{\varphi_\gamma(t_0)}+t\omega_{u_{\gamma  t_0}}+t\left((2l-1)\omega+(1+2lt_0)\nu_\gamma\right)\\
&\geqslant t\omega_{u_{\gamma  t_0}}.
\end{split}
\end{equation}
Hence
\begin{equation}\label{0330011}
\left(\omega_{\gamma(t+t_0)}+\dd M(t)\right)^n\geqslant t^n\omega_{u_{\gamma  t_0}}^n=t^ne^{u_{\gamma t_0}-h-2l\varphi_\gamma(t_0)}\frac{\omega^n}{\vert s\vert_h^{2(1-\gamma)}},
\end{equation}
which implies that
\begin{equation}\label{0330012}
\begin{split}
&\ \ \ \ \log\frac{(\omega_{\gamma(t+t_0)}+\dd M(t))^n}{\omega^n}+h+(1-\gamma)\log\vert s\vert_h^2\\
&\geqslant n\log t+u_{\gamma t_0}-2l\varphi_\gamma(t_0)=\frac{\partial M(t)}{\partial t}.
\end{split}
\end{equation}
So $M(t)$ is a sub-solution to equation \eqref{CC} with initial value $\varphi_{\gamma}(t_0)$. Since $\varphi_\gamma(t+t_0)$ is a solution with initial value $\varphi_\gamma(t_0)$, then by the maximum principle Proposition \ref{0313003} we have
\begin{equation}\label{0330013}
M(t)\leqslant\varphi_\gamma(t+t_0),
\end{equation}
and by Proposition \ref{0320029} and \eqref{0330007}
\begin{equation}\label{0330014}
\begin{split}
\varphi_\gamma(t+t_0)&\geqslant(1-2lt)\varphi_\gamma(t_0)+tu_{\gamma t_0}+n(t\log t-t)\\
&=\varphi_\gamma(t_0)-2lt\left(\varphi_\gamma(t_0)-t_0\psi_\gamma\right)+t\left(u_{\gamma t_0}-\left(1+2lt_0\right)\psi_\gamma\right)+n(t\log t-t)+t\psi_\gamma\\
&\geqslant\varphi_\gamma(t_0)+t\psi_\gamma+n(t\log t-t)-Ct.
\end{split}
\end{equation}
By using Theorem \ref{0510001}, \eqref{0507002} and Remark \ref{0502001}, after letting $t_0\to0$, we have
\begin{equation}\label{0330015}
\varphi_0+t\psi_\gamma+n(t\log t-t)-Ct\leqslant \varphi_\gamma(t)\ \ \ \text{in}\ \ \ X\setminus D
\end{equation}
for all $t\in(0,\frac{1}{2l})$ and $0<l\ll1$.
\end{proof}
\begin{cor}\label{0330016}
There exist a uniform constant $l\geqslant1$ depending only on $\eta$, $\theta$ and $\omega$, and a uniform constant $C$ depending only on $\|\varphi_0\|_{L^\infty(X)}$, $n$, $\omega$, $\theta$, $\eta$ and $T$ such that 
\begin{equation}\label{0330017}
\varphi_0+t\psi_o+n\left(t\log t-t\right)-Ct\leqslant\varphi(t)\ \ \ \text{in}\ \ \ X\setminus D
\end{equation}
for all $t\in(0,\frac{1}{2l})$.
\end{cor}

\begin{pro}\label{0229029}
There holds
\begin{equation}\label{0229030}
\lim\limits_{t\rightarrow0^{+}}\|\varphi(t)-\varphi_{0}\|_{L^{1}(X)}=0.
\end{equation}
and
\begin{equation}\label{0331001}
\lim\limits_{t\to0^+}\varphi(t)=\varphi_0\ \ \ \text{in}\ \ \ X\setminus D.
\end{equation}
\end{pro}
\begin{proof}
By using Corollaries \ref{0329007} and \ref{0330016}, we have
\begin{equation}\label{0329009}
t\psi_o+n\left(t\log t-t\right)-Ct\leqslant\varphi(t)-\varphi_0\leqslant\varphi_{\ve j}(t)-\varphi_j+\varphi_{j}-\varphi_0,
\end{equation}
and so
\begin{equation}\label{0331004}
\|\varphi(t)-\varphi_0\|_{L^1(X)}\leqslant\|\varphi_{\ve j}(t)-\varphi_j\|_{L^1(X)}+\|\varphi_{j}-\varphi_0\|_{L^1(X)}+Ct,
\end{equation}
where we use the fact 
\begin{equation}\label{0331005}
\int_X\left|\psi_o\right|dV_\omega\leqslant C
\end{equation}
for some uniform constant $C$ in the last inequality. Since
\begin{equation}\label{032901011}
\|\varphi_{j}-\varphi_0\|_{L^1(X)}\to0\ \ \ \text{as}\ \ \ j\to+\infty\ \ \ \text{and}\ \ \ \|\varphi_{\ve j}(t)-\varphi_j\|_{\mathcal{C}^\infty(X)}\to0\ \ \ \text{as}\ \ \ t\to0,
\end{equation}
for any $\epsilon>0$, there exists $j$ such that
\begin{equation}\label{0329011}
\|\varphi_{j}-\varphi_0\|_{L^1(X)}\leqslant\frac{\epsilon}{2},
\end{equation}
and for such $j$, there exists a $\delta>0$ such that for any $t\in(0,\delta)$,
\begin{equation}\label{0329012}
\|\varphi_{\ve j}(t)-\varphi_j\|_{\mathcal{C}^\infty(X)}\leqslant\frac{\epsilon}{4}\ \ \ \text{and}\ \ \ Ct\leqslant\frac{\epsilon}{4}.
\end{equation}
So for any $\epsilon>0$, there exists a $\delta>0$ such that for any $t\in(0,\delta)$
\begin{equation}\label{03290130}
\|\varphi(t)-\varphi_0\|_{L^1(X)}\leqslant\epsilon,
\end{equation}
which means 
\begin{equation}\label{0329013}
\lim\limits_{t\to0^+}\|\varphi(t)-\varphi_0\|_{L^1(X)}=0.
\end{equation}

On the other hand, since at any point $x\in X\setminus D$, $\varphi_j(x)$ decreases to $\varphi_0(x)$ as $j\to+\infty$, and $t\psi_o(x)$ and $\|\varphi_{\ve j}(t)-\varphi_j\|_{\mathcal{C}^\infty(X)}$ converge to $0$ as $t\to0^+$, we also conclude that 
\begin{equation}\label{0331006}
\lim\limits_{t\to0^+}\varphi(t)=\varphi_0\ \ \ \text{in}\ \ \ X\setminus D.
\end{equation}
We complete the proof of this proposition, which implies the forth condition in Theorem \ref{0506001}.
\end{proof}
Combining all above arguments together, we complete the proof of Theorem \ref{0506001}.
\end{proof}
\begin{defi}\label{0510002}
Assume that $\hat\omega\in [\omega]$ is a positive closed current with potential $\varphi_0\in L^\infty(X)$. By saying $\omega(t)$ is a weak solution to the twisted cusp K\"ahler-Ricci flow
\begin{equation}
\left\{
\begin{aligned}
 &\ \frac{\partial \omega(t)}{\partial t}=-{\rm Ric}(\omega(t))+[D]+\eta\\
 &\ \ \ \ \ \ \ \ \ \ \ \ \ \ \ \ \ \ \ \ \ \ \ \ \ \ \ \ \ \ \ \ \ \ \ \ \ \ \ \  \\
 &\ \omega(t)|_{t=0}=\hat\omega
\end{aligned}
\right.\tag{$CKRF^{\eta}_{0}$}
\end{equation}
on $(0,T')\times X$, we mean that it satisfies the following conditions.
\begin{itemize}
  \item For any $0<T<T'$, there exists a constant $C$ such that for any $t\in(0,T)$,
\begin{equation}\label{050900688}
e^{-\frac{C}{t^2}}\omega_o\leqslant\omega(t)\leqslant e^{\frac{C}{t^2}}\omega_o\ \ \ \text{in}\ \ \ X\setminus D;
\end{equation}
   \item  In $(0,T')\times(X\setminus D)$, $\omega(t)$ satisfies the smooth twisted K\"ahler-Ricci flow
 \begin{equation}\label{033100888}
\frac{\partial \omega(t)}{\partial t}=-{\rm Ric}(\omega(t))+\eta;
\end{equation}
  \item On $(0,T')\times X$, $\omega(t)$ satisfies K\"ahler-Ricci flow \eqref{PK} in the sense of currents;
  \item There exists a metric potential $\varphi(t)\in\mathcal{C}^{\infty}\left((0,T')\times(X\setminus D)\right)$ with respect to $\omega_t$ such that $\omega(t)=\omega_{t}+\dd\varphi(t)$,
  \begin{equation}\label{033100888}
  \lim\limits_{t\rightarrow0^{+}}\|\varphi(t)-\varphi_0\|_{L^{1}(X)}=0\ \ \ \text{\rm and}\ \ \ \lim\limits_{t\rightarrow0^{+}}\varphi(t)=\varphi_0\ \ \ \text{in}\ \ \ X\setminus D;
\end{equation}
\item On $[0,T')$, $\|\varphi(t)-t\psi_o\|_{L^\infty(X)}\leqslant C$;
\item For any $0<T<T'$, there exist constants $a$ and $C$ such that for any $t\in(0,T)$,
\begin{equation}\label{040600188}
-C+a\log t\leqslant\dot\varphi(t)-\psi_o\leqslant\frac{C}{t}.
\end{equation}
\end{itemize}
\end{defi}
\begin{thm}\label{0330018}
There exists a weak solution $\omega(t)=\omega+\dd\varphi(t)$ to the twisted cusp K\"ahler-Ricci flow \eqref{PK} on $(0,T^\eta_{\max})\times X$, where $\varphi(t)$ is obtained in \eqref{223}.
\end{thm}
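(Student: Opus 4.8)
The plan is to verify that the function $\varphi(t)$ produced in \eqref{223}, together with $\omega(t):=\omega_t+\dd\varphi(t)$, satisfies verbatim the six conditions listed in Definition \ref{0510002} with $T'=T^\eta_{\max}$; since all of these have essentially been established in Theorem \ref{0506001} and the uniform estimates of Sections \ref{Bou1}--\ref{Bou2}, the argument is mostly a matter of bookkeeping. Recall that $\varphi(t)$ is the $\mathcal{C}^\infty_{loc}$-limit in $(0,T^\eta_{\max})\times(X\setminus D)$ of a subsequence $\varphi_{\gamma_i}(t)$ of solutions of \eqref{CC}, so in particular it is smooth in $(0,T^\eta_{\max})\times(X\setminus D)$ and $\omega(t)$ is a smooth closed $(1,1)$-form there; this gives the fourth item (existence of a smooth potential relative to $\omega_t$).

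First I would record the two-sided bound $e^{-C/t^2}\omega_o\leqslant\omega(t)\leqslant e^{C/t^2}\omega_o$ on $X\setminus D$: this is exactly \eqref{221} in the proof of Theorem \ref{0506001}, and it shows in passing that $\omega(t)$ is a genuine K\"ahler metric on $X\setminus D$ for each $t>0$. For the smooth twisted K\"ahler-Ricci flow $\partial_t\omega(t)=-{\rm Ric}(\omega(t))+\eta$ on $(0,T^\eta_{\max})\times(X\setminus D)$, I would differentiate \eqref{223} with $\dd$: writing $\dot\varphi(t)=\log\frac{\omega(t)^n}{\omega^n}+h+\log|s|_h^2$ and using \eqref{0320014} together with $-\dd\log|s|_h^2=\theta$ on $X\setminus D$ gives $\partial_t\omega(t)=\nu+\dd\dot\varphi(t)=-{\rm Ric}(\omega(t))+\eta$. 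That $\omega(t)$ solves \eqref{PK} in the sense of currents on all of $(0,T^\eta_{\max})\times X$ is precisely the test-form computation \eqref{201503201}--\eqref{2015032040909} carried out in the proof of Theorem \ref{0506001}, and the $L^1$- and pointwise-convergence $\varphi(t)\to\varphi_0$ as $t\to0^+$ is Proposition \ref{0229029}.

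It remains to pass to the limit $\gamma\to0$ in the $\mathcal{C}^0$-estimates. For $\|\varphi(t)-t\psi_o\|_{L^\infty(X)}\leqslant C$ on $[0,T^\eta_{\max})$, I would combine Proposition \ref{0320029} ($\varphi_\gamma(t)\leqslant\sup\limits_X\varphi_0+t\psi_\gamma+Ct$) and Proposition \ref{0322013} ($\varphi_\gamma(t)\geqslant t\psi_\gamma-C$), take the limit along $\gamma_i$ using the monotone convergence $\psi_{\gamma_i}\searrow\psi_o$ (hence $t\psi_{\gamma_i}\to t\psi_o$), and conclude $-C\leqslant\varphi(t)-t\psi_o\leqslant\sup\limits_X\varphi_0+Ct$ on $X\setminus D$, which is the desired bound. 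Likewise, for \eqref{040600188} the upper bound $\dot\varphi(t)-\psi_o\leqslant C/t$ follows from Corollary \ref{0405001} and the lower bound $\dot\varphi(t)-\psi_o\geqslant -C+a\log t$, with $a=n/\sigma$, follows from Corollary \ref{0405003}, passing to the $\gamma_i$-limit with the help of the $\mathcal{C}^\infty_{loc}$-convergence of $\varphi_{\gamma_i}(t)$ and $\psi_{\gamma_i}\to\psi_o$. With the six conditions of Definition \ref{0510002} verified, $\omega(t)=\omega_t+\dd\varphi(t)$ is a weak solution of \eqref{PK} on $(0,T^\eta_{\max})\times X$.

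In fact there is no serious obstacle in this theorem: the genuinely substantive inputs---the uniform $\mathcal{C}^0$, $\dot\varphi$ and Laplacian estimates independent of $\gamma$, their limits as $\gamma\to0$, the current-level identity, and the sub-/super-solution analysis governing the behaviour as $t\to0^+$---have all been established earlier. The only step that requires a line of computation of its own is the verification, just sketched, that \eqref{223} yields the smooth twisted K\"ahler-Ricci flow equation away from $D$; everything else is a reorganization of Theorem \ref{0506001}.
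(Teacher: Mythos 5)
Your proposal is correct and follows exactly the route the paper intends: Theorem \ref{0330018} is stated without a separate proof because it is an immediate restatement of Theorem \ref{0506001}, whose six listed properties coincide verbatim with the conditions of Definition \ref{0510002} with $T'=T^\eta_{\max}$, and your bookkeeping (including the limit passage $\psi_{\gamma_i}\searrow\psi_o$ in the $\mathcal{C}^0$-estimates and the derivation of the smooth flow equation from \eqref{223}) matches the ingredients already assembled in Sections \ref{Bou1}--\ref{Exi}.
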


\section{Uniqueness}\label{Uni}

In this section, we consider the uniqueness of the cusp K\"ahler-Ricci flow \eqref{PK} in the sense of Definition \ref{0510002}. In order to overcome the problems from the weak initial metric and the singular term in the equation, we apply Di Nezza-Lu's idea in \cite{NL2017} which first considering positive time and then taking a limit to the initial time on smooth equation to the singular equation, and we also use the techniques introduced by the authors in \cite{JWLXZ19}, which ensure the maximum point always attained outside the divisor.

First, we show a lower bound on the solution to equation \eqref{PC} near the initial time $t=0$.
\begin{pro}\label{0401005}
Assume that $\phi(t)$ is a weak solution to equation \eqref{PC} with initial value $\phi_0\in L^\infty(X)$ which is a $\omega$-psf function, that is, $\omega_t+\dd\phi(t)$ is a weak solution to equation \eqref{PK} in the sense of Definition \ref{0510002}. There exist a constant $l\geqslant1$ depending only on $\eta$, $\theta$ and $\omega$, and a constant $C$ depending only on $\|\phi_0\|_{L^\infty(X)}$, $n$, $\omega$, $\theta$, $\eta$ and $T$ such that 
\begin{equation}\label{0401006}
\phi_0+t\psi_o+n(t\log t-t)-Ct\leqslant\phi(t)\ \ \ \text{in}\ \ \ X\setminus D
\end{equation}
for all $t\in(0,\frac{1}{2l})$.
\end{pro}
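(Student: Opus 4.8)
The plan is to mimic the construction in the proof of Proposition \ref{0329010}, replacing the conical potential $\psi_\gamma$ and the conical flow solution $\varphi_\gamma$ by their cusp counterparts $\psi_o$ and $\phi$, and exploiting the uniform estimates that are already available for the limit flow. Fix $l\geqslant 1$ so that $\frac{1}{2l-1}<T$, $(2l-1)\omega+\nu\geqslant\frac{\omega}{2}$, and then fix $0<t_0\ll 1$ so that $(2l-1)\omega+(1+2lt_0)\nu\geqslant 0$ and $\omega+(1+2lt_0)\dd\psi_o=\omega_o+2lt_0\dd\psi_o\geqslant\frac{1}{2}\omega_o$; this is possible because $\dd\psi_o$ is bounded below by a multiple of $\omega$ (this is the cusp analogue of \eqref{0320023}, which follows by letting $\gamma\to 0$ in that estimate, or directly from \eqref{0328001}). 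Note that at the positive time $t_0$, Proposition \ref{0321017} (passed to the limit, giving $\phi(t_0)\geqslant\inf_X\phi_0+t_0\psi_o-C$) and the first bullet of Definition \ref{0510002} guarantee that $\phi(t_0)$ is a continuous function on $X$ (it is locally smooth off $D$ and controlled by $t_0\psi_o$ near $D$, hence continuous there as well), and moreover $\phi(t_0)-t_0\psi_o$ is bounded.

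The construction proceeds as follows. I solve the Kołodziej-type complex Monge–Ampère equation
\begin{equation}\label{pfprop1}
(\omega+\dd u_{t_0})^n=e^{u_{t_0}-h-2l\phi(t_0)}\frac{\omega^n}{|s|_h^2},
\end{equation}
which has a unique continuous solution $u_{t_0}$ by Kołodziej's theorem \cite{K000} since the right-hand side is in $L^p$ for some $p>1$ (the exponent of $|s|_h^2$ being exactly $1$, hence the density is $L^p$ for all $p<\infty$ away from $D$, and integrable). Rewriting \eqref{pfprop1} in terms of $u_{t_0}-(1+2lt_0)\psi_o$ against the background metric $\omega_o+2lt_0\dd\psi_o$, and using the cusp analogue of \eqref{03200241} (namely $\big|\log\frac{\omega_o^n}{\omega^n}+\log|s|_h^2-\psi_o\big|\leqslant C$, which again follows from Guenancia's estimate by letting $\gamma\to 0$), a minimum-point argument identical to the one in Proposition \ref{0329010} — testing against $F(x)=u_{t_0}-(1+2lt_0)\psi_o-\epsilon\log|s|_h^2$ and letting $\epsilon\to 0$ — yields a uniform bound $\|u_{t_0}-(1+2lt_0)\psi_o\|_{L^\infty(X)}\leqslant C$. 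Then I set
\begin{equation}\label{pfprop2}
M(t)=(1-2lt)\phi(t_0)+t u_{t_0}+n(t\log t-t),
\end{equation}
and for $0\leqslant t\leqslant\frac{1}{2l}-t_0$ compute, exactly as in \eqref{0330010}–\eqref{0330012}, that $M(t)$ is a sub-solution to the parabolic Monge–Ampère equation \eqref{PC} shifted by $t_0$ (with initial value $\phi(t_0)$), using $(\omega_{t+t_0}+\dd M(t))^n\geqslant t^n\omega_{u_{t_0}}^n$ and \eqref{pfprop1}.

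The comparison step is where I must be slightly careful: I need a maximum principle comparing the sub-solution $M(t)$ with the weak solution $\phi(t+t_0)$ of \eqref{PC}. Since $\phi(t+t_0)$ is a genuine solution off $D$, is $L^1$- and pointwise-continuous up to the time $t_0$, and both $M$ and $\phi$ are controlled by $t\psi_o$ near $D$ so that $M(t)-\phi(t+t_0)+a\log|s|_h^2\to-\infty$ at $D$, the standard argument (auxiliary function $M(t)-\phi(t+t_0)+a\log|s|_h^2-\epsilon(t-t_0)$, maximum attained off $D$, initial time $t_0$, then $\epsilon,a\to 0$) gives $M(t)\leqslant\phi(t+t_0)$; this is essentially Proposition \ref{0313003} adapted to the cusp setting, and its proof carries over verbatim once one notes $-\dd\log|s|_h^2=\theta$ is bounded and the relevant metrics are bounded below off $D$ on $[t_1,T]$. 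Plugging in \eqref{pfprop2}, regrouping as in \eqref{0330014} using the $L^\infty$ bound on $u_{t_0}-(1+2lt_0)\psi_o$ and the lower bound $\phi(t_0)\geqslant\inf_X\phi_0+t_0\psi_o-C$, and finally letting $t_0\to 0^+$ — using the fourth bullet of Definition \ref{0510002}, $\lim_{t\to 0^+}\varphi(t)=\varphi_0$ in $X\setminus D$ together with $\|\phi(t)-t\psi_o\|_{L^\infty}\leqslant C$ to pass to the limit — yields $\phi_0+t\psi_o+n(t\log t-t)-Ct\leqslant\phi(t)$ on $X\setminus D$ for $t\in(0,\frac{1}{2l})$. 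The main obstacle is ensuring that the limit $t_0\to 0^+$ can be taken: one does not have $L^\infty$-convergence of $\phi(t_0)$ to $\phi_0$ (only $L^1$ and pointwise off $D$), so the passage to the limit in the inequality must be done pointwise on $X\setminus D$ and then extended, rather than uniformly; the uniform control $\|\phi(t)-t\psi_o\|_{L^\infty(X)}\leqslant C$ is what makes this legitimate.
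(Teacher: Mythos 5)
Your overall strategy (sub-solution built from a time-$t_0$ slice plus a Monge--Amp\`ere solution, comparison principle, then $t_0\to0^+$) matches the paper's, but there is a genuine gap at the very first construction step. You propose to solve
$(\omega+\dd u_{t_0})^n=e^{u_{t_0}-h-2l\phi(t_0)}\,|s|_h^{-2}\,\omega^n$
by Ko{\l}odziej's theorem, claiming the density is ``$L^p$ for some $p>1$ \dots and integrable.'' It is not: $|s|_h^{-2}$ fails to be in $L^1$ near $D$ (the transverse integral $\int|z|^{-2}$ diverges), and the factor $e^{-2l\phi(t_0)}\sim(\log^2|s|_h^2)^{2lt_0}$ only makes this worse, since by Definition \ref{0510002} one has $\phi(t_0)=t_0\psi_o+O(1)$, which is unbounded below near $D$. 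This is exactly the point where the cusp case ($\gamma=0$) differs from the conical case of Proposition \ref{0329010}, where the density $|s|_h^{-2(1-\gamma)}$ is in $L^p$ for $p<\frac{1}{1-\gamma}$ and Ko{\l}odziej applies. Consequently your $u_{t_0}$ is not produced by that theorem, it is not continuous on $X$, and your subsequent minimum-point argument for $\|u_{t_0}-(1+2lt_0)\psi_o\|_{L^\infty}\leqslant C$ (which needs $F$ to blow up to $+\infty$ at $D$, hence a priori control of $u_{t_0}$ there) does not get off the ground.

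The paper circumvents this by solving the \emph{cusp} equation without the $\phi(t_0)$-term, namely $(\omega+\dd u_{t_0})^n=e^{u_{t_0}-h}\,|s|_h^{-2}\,\omega^n$, whose solvability in $\mathcal{C}^\infty(X\setminus D)$ is due to Kobayashi and Tian--Yau, with the uniform bound $\|u_{t_0}-\psi_o\|_{L^\infty(X\setminus D)}\leqslant C$ supplied by Guenancia (here the solution behaves like $\psi_o$, making $e^{u_{t_0}}|s|_h^{-2}$ integrable). The dependence on the slice $\phi(t_0)$ is then moved out of the elliptic equation and into the barrier itself: the paper takes $M(t)=(1-2lt)\phi(t_0)+2ltt_0\psi_o+tu_{t_0}+n(t\log t-t)-Ct$, where the extra $-Ct$ absorbs $-2l(\phi(t_0)-t_0\psi_o)$ in the sub-solution inequality using only the $L^\infty$-bound on $\phi(t_0)-t_0\psi_o$. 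If you want to keep your version of the equation you would need an existence and a priori estimate theory for cusp-type Monge--Amp\`ere equations with the merely bounded (relative to $t_0\psi_o$) coefficient $\phi(t_0)$ in the exponent, which is not available off the shelf; the cleaner fix is to adopt the paper's decoupling. Your comparison step and the final $t_0\to0^+$ limit (pointwise off $D$, using the uniform control of $\phi(t)-t\psi_o$ and the $L^\infty(X\setminus D)$ time-continuity of $\phi(t)-t\psi_o$) are in line with the paper's argument and are fine once the barrier is correctly constructed.
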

\begin{proof}
Fix $l\geqslant1$ such that
\begin{equation}\label{0401007}
\frac{1}{2l-1}< T\ \ \ \text{and}\ \ \ (2l-1)\omega+\nu\geqslant\frac{\omega}{2},
\end{equation}
and fix $0<t_0\ll1$ such that
\begin{equation}\label{0401008}
(2l-1-2lt_0)\omega+(1+2lt_0)\nu\geqslant0.
\end{equation}

We consider equation
\begin{equation}\label{0401010}
(\omega+\dd u_{t_0})^n=e^{u_{t_0}-h}\frac{\omega^n}{\vert s\vert_h^{2}}.
\end{equation}
It follows from Kobayashi \cite{Kob84} and Tian-Yau's results \cite{TY87} that there exists a solution $u_{t_0}\in\mathcal{C}^\infty(X\setminus D)$ to above equation. Furthermore, from Guenancia's results \cite{G11}, there exists a uniform constant $C$ independent of $t_0$ such that
\begin{equation}\label{0401011}
\|u_{t_0}-\psi_o\|_{L^\infty(X\setminus D)}\leqslant C.
\end{equation}

We set 
\begin{equation}\label{0402001}
M(t)=(1-2lt)\phi(t_0)+2ltt_0\psi_o+tu_{t_0}+n(t\log t-t)-Ct
\end{equation}
for some uniform constant $C$ independent of $t_0$ and $t$. For $0\leqslant t\leqslant\frac{1}{2l}-t_0$, by using \eqref{0401008}, we have 
\begin{equation}\label{0402002}
\begin{split}
&\ \ \ \omega_{(t+t_0)}+\dd M(t)\\
&=\omega+(t+t_0)\nu+(1-2lt)\dd\phi(t_0)+2ltt_0\dd\psi_o+t\dd u_{t_0}\\
&=(1-2lt)\omega_{\phi(t_0)}+2ltt_0\omega_o+t\omega_{u_{t_0}}+t\left((2l-1-2lt_0)\omega+(1+2lt_0)\nu\right)\\
&\geqslant 2lt_0\omega_{\phi(t_0)}+t\omega_{u_{t_0}}.
\end{split}
\end{equation}
Hence
\begin{equation}\label{0402003}
\left(\omega_{t+t_0}+\dd M(t)\right)^n\geqslant t^n\omega_{u_{t_0}}^n=t^ne^{u_{t_0}-h}\frac{\omega^n}{\vert s\vert_h^{2}},
\end{equation}
which implies that
\begin{equation}\label{0402004}
\begin{split}
&\ \ \ \ \log\frac{(\omega_{t+t_0}+\dd M(t))^n}{\omega^n}+h+\log\vert s\vert_h^2\\
&\geqslant n\log t+u_{t_0}\\
&\geqslant n\log t+u_{t_0}-2l\left(\phi(t_0)-t_0\psi_o\right)-C\\
&=\frac{\partial M(t)}{\partial t}.
\end{split}
\end{equation}

For any $a>0$. We consider $\Phi(t)=M(t)-\phi(t+t_0)+a\log|s|_h^2$ on $[0,\frac{1}{2l}-t_0]\times (X\setminus D)$. Since
\begin{equation}\label{0402005}
\begin{split}
\Phi(t)&=(1-2lt)\phi(t_0)+2ltt_0\psi_o+tu_{t_0}+n(t\log t-t)-Ct-\phi(t+t_0)+a\log|s|_h^2\\
&= (1-2lt)\left(\phi(t_0)-t_0\psi_o\right)+t(u_{t_0}-\psi_o)-\left(\phi(t+t_0)-(t+t_0)\psi_o\right)\\
&\ \ \ +n(t\log t-t)-Ct+a\log|s|_h^2
\end{split}
\end{equation}
and there exists a uniform constant $C$ such that
\begin{equation}\label{0402006}
\|(1-2lt)\left(\phi(t_0)-t_0\psi_o\right)+t(u_{t_0}-\psi_0)-\left(\phi(t+t_0)-(t+t_0)\psi_o\right)\|_{L^\infty(X\setminus D)}\leqslant C,
\end{equation}
$\Phi(t)$ tends to $-\infty$ as $x\to D$.

If we denote
\begin{equation}\label{0402007}
\hat{\Delta}=\int_0^1 g_{s}^{i\bar{j}}\frac{\partial^2}{\partial z^i\partial\bar{z}^j}ds,
\end{equation}
where $g_s$ is the metric corresponding to $\omega_s=\omega_{(t+t_0)}+s\dd M(t)+(1-s)\dd \phi(t+t_0)$. Then $\Phi(t)$ evolves along the following equation
\begin{equation}\label{0402008}
  \frac{\partial \Phi(t)}{\partial t}\leqslant\hat{\Delta}\Phi(t)-a\hat{\Delta}\log|s|_h^{2}.
\end{equation}
Since $\omega_{(t+t_0)}+\dd M(t)$ and $\omega_{\phi(t)}$ are bounded from below by $c_0\omega_o$ and $\omega_o\geqslant c_1\omega$, we obtain
\begin{equation}\label{0402009}
\omega_s\geqslant c\omega
\end{equation}
for some constants $c_0$ and $c$ depending on $t_0$ and $T$. Combining this with $-\sqrt{-1}\partial\bar{\partial}\log|s|_h^2=\theta$, we conclude that there exists a constant $C(t_0,T)$ depending on $t_0$ and $T$ such that
\begin{equation}\label{0402010}
-\hat{\Delta}\log|s|_h^2=\int_0^1\tr_{\omega_s}\theta ds\leq C(t_0,T)
\end{equation}
in $X\setminus D$. Then 
\begin{equation}\label{0402011}
  \frac{\partial \Phi(t)}{\partial t}\leqslant\hat{\Delta}\Phi(t)+aC(t_0,T).
\end{equation}

Let $\tilde{\Phi}=\Phi(t)-atC(t_0,T)-\epsilon t$, and $(t_1,x_1)$ be it maximum point. Since $\Phi(t)$ tends to $-\infty$ as $x\to D$ and so the space maximum point $x_1$ of $\tilde\Phi(t)$ in $[0,\frac{1}{2l}-t_0]\times (X\setminus D)$ attained away from $D$. If $t_1>0$, by the maximum principle, at $(t_1,x_1)$, we have
\begin{equation}\label{0402012}
 0\leqslant \left(\frac{\partial }{\partial t}-\hat{\Delta}\right)\tilde{\Phi}(t)\leqslant -\epsilon,
\end{equation}
which is impossible, hence $t_1=0$. Since
\begin{equation}\label{0402015}
\tilde\Phi(0,x_1)=\Phi(0,x_1)\leqslant M(0,x_1)-\phi(t_0,x_1)=0,
\end{equation}
then for $(t,x)\in [0,\frac{1}{2l}-t_0]\times X$, we obtain
\begin{equation}\label{0402013}
M(t)-\phi(t+t_0)\leqslant aTC(t_0,T)+\epsilon T-a\log|s|_h^2.
\end{equation}
We let $\epsilon\to0$ and then $a\rightarrow0$, there holds
\begin{equation}\label{0402014}
\begin{split}
\phi(t+t_0)&\geqslant(1-2lt)\phi(t_0)+2ltt_0\psi_o+tu_{t_0}+n(t\log t-t)-Ct\\
&=\phi(t_0)-2lt\left(\phi(t_0)-t_0\psi_o\right)+t\psi_o+t\left(u_{t_0}-\psi_0\right)+n(t\log t-t)-Ct\\
&\geqslant\phi(t_0)+t\psi_o-Ct+n(t\log t-t)
\end{split}
\end{equation}
for some uniform constant $C$ depending only on $\|\phi_0\|_{L^\infty(X)}$, $n$, $\omega$, $\theta$, $\eta$ and $T$. 

{\bf Claim.} For any $t\in(0,T]$, there holds
\begin{equation}\label{040201899}
\lim\limits_{s\to0^+}\left\|\left(\phi(t+s)-(t+s)\psi_o\right)-\left(\phi(t)-t\psi_o\right)\right\|_{L^\infty(X\setminus D)}=0.
\end{equation}
Since by the last condition \eqref{040600188} in Definition \ref{0510002}, there exists a uniform constant $C$ such that
\begin{equation}\label{0402016}
-C+a\log (t+\lambda)\leqslant\dot{\phi}(t+\lambda)-\psi_o\leqslant \frac{C}{t+\lambda}\ \ \ \ \text{for}\ \ \lambda\in[0,T-t]
\end{equation}
with $0<t<T$, by integrating above inequality from $0$ to $s$ with respect to $\lambda$ on both sides, we have
\begin{equation}\label{0402017}
\phi(t+s)-(t+s)\psi_o-\left(\phi(t)-t\psi_o\right)\leqslant C\left(\log\left(t+s\right)-\log t\right)
\end{equation}
and
\begin{equation}\label{040201788}
-Cs+a\left(\left(t+s\right)\log\left(t+s\right)-\left(t+s\right)\right)-a\left(t\log t-t\right)\leqslant\phi(t+s)-(t+s)\psi_o-\left(\phi(t)-t\psi_o\right)
\end{equation}
in $[0,T-t]\times (X\setminus D)$. Let $s\to0$ in \eqref{0402017} and \eqref{040201788}, for fix $0<t<T$, there holds
\begin{equation}\label{0402018}
\lim\limits_{s\to0^+}\left\|\left(\phi(t+s)-(t+s)\psi_o\right)-\left(\phi(t)-t\psi_o\right)\right\|_{L^\infty(X\setminus D)}=0.
\end{equation}
We complete the proof of the Claim.

By using \eqref{0402014}, we also have
\begin{equation}\label{0402019}
\phi(t+t_0)-(t+t_0)\psi_o\geqslant\phi(t_0)-t_0\psi_o-Ct+n(t\log t-t)\geqslant\phi(t_0)-Ct_0-Ct+n(t\log t-t),
\end{equation}
where we use the fact $\psi_\gamma$ decreasing to $\psi_o$ as $\gamma\to0$ and $\psi_\gamma$ is a continuous function on $X$ for $\gamma>0$. By using \eqref{033100888} and \eqref{0402018}, we have
\begin{equation}\label{0402020}
\phi_0+n(t\log t-t)-Ct\leqslant \phi(t)-t\psi_o\ \ \ \text{in}\ \ \ X\setminus D
\end{equation}
for all $t\in(0,\frac{1}{2l})$ after letting $t_0\to0$ in \eqref{0402019}.
\end{proof}
Next, we prove a maximum principle for the equation with cusp singualrity.
\begin{thm}\label{0402021}
Assume that $\varphi(t)$ and $\phi(t)$ are weak solutions to equation \eqref{PC} with initial values $\varphi_0$ and $\phi_0$ on $(0,T)\times X$ respectively, where $\varphi_0,\ \phi_0\in{\rm PSH}(X,\omega)\cap L^\infty(X)$.Then 
\begin{equation}\label{0402022}
\varphi(t)-\phi(t)\leqslant\sup\limits_X\left(\varphi_0-\phi_0\right)\ \ \ \text{in}\ \ \ X\setminus D
\end{equation}
for all $t\in[0,T)$.
\end{thm}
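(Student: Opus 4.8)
The plan is to adapt the maximum-principle argument of Proposition \ref{0313003} to the cusp setting, using the perturbation term $a\log|s|_h^2$ to force the spatial maximum away from $D$, and handling the initial time by the short-time lower bound just established (Proposition \ref{0401005}). First I would fix $0<t_1<T$ and $a>0$ and consider, on $[t_1,T')\times X$, the function $\Psi(t)=\varphi(t)-\phi(t)+a\log|s|_h^2$. Introducing the averaged Laplacian $\hat\Delta=\int_0^1 g_s^{i\bar j}\partial_i\partial_{\bar j}\,ds$ with $g_s$ the metric of $\omega_s=\omega_t+s\dd\varphi(t)+(1-s)\dd\phi(t)$, and using that both $\varphi(t)$ and $\phi(t)$ satisfy the same equation \eqref{PC} with identical right-hand side terms $h+\log|s|_h^2$ (which therefore cancel), $\Psi$ evolves by
\begin{equation}\label{0402023plan}
\frac{\partial\Psi(t)}{\partial t}\leqslant\hat\Delta\Psi(t)-a\hat\Delta\log|s|_h^2.
\end{equation}
Because $\omega_t+\dd\varphi(t)$ and $\omega_t+\dd\phi(t)$ are each bounded below by $e^{-C/t^2}\omega_o\geqslant c(t_1)\,\omega$ on $[t_1,T']$ by \eqref{0509006} (first condition of Definition \ref{0510002}), we get $\omega_s\geqslant c\,\omega$, and since $-\dd\log|s|_h^2=\theta$, the term $-\hat\Delta\log|s|_h^2=\int_0^1\tr_{\omega_s}\theta\,ds$ is bounded above by a constant $C(t_1,T')$ on $X\setminus D$; thus $\partial_t\Psi\leqslant\hat\Delta\Psi+aC(t_1,T')$.

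Next I would run the barrier argument: set $\tilde\Psi=\Psi-aC(t_1,T')(t-t_1)-\epsilon(t-t_1)$ on $[t_1,T']\times X$. Since $\Psi(t)\to-\infty$ as $x\to D$ (the term $a\log|s|_h^2$ dominates, as $\varphi(t)-t\psi_o$ and $\phi(t)-t\psi_o$ are $L^\infty$ by the fifth bullet of Definition \ref{0510002}, and $\psi_o$ is integrable), the spatial maximum of $\tilde\Psi$ is attained in $X\setminus D$ where everything is smooth. If the maximum over $[t_1,T']\times X$ occurs at $(t_0,x_0)$ with $t_0>t_1$, the maximum principle gives $0\leqslant(\partial_t-\hat\Delta)\tilde\Psi(t_0,x_0)\leqslant-\epsilon$, a contradiction; hence $t_0=t_1$. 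Therefore for $(t,x)\in[t_1,T']\times(X\setminus D)$,
\begin{equation}\label{0402024plan}
\varphi(t)-\phi(t)\leqslant\sup\limits_X\big(\varphi(t_1)-\phi(t_1)\big)+aC(t_1,T')(T'-t_1)+\epsilon(T'-t_1)-a\log|s|_h^2,
\end{equation}
and letting $\epsilon\to0$ then $a\to0$ yields $\varphi(t)-\phi(t)\leqslant\sup_X(\varphi(t_1)-\phi(t_1))$ on $[t_1,T')\times(X\setminus D)$, hence on $X$ by continuity.

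It remains to let $t_1\to0^+$, and this is where the main obstacle lies: unlike in Proposition \ref{0313003}, the initial data are only in $L^\infty\cap{\rm PSH}(X,\omega)$ and the convergence $\varphi(t_1)\to\varphi_0$ is only in $L^1(X)$ and pointwise on $X\setminus D$, not in $L^\infty$. To conclude $\limsup_{t_1\to0}\sup_X(\varphi(t_1)-\phi(t_1))\leqslant\sup_X(\varphi_0-\phi_0)$, I would use that $\varphi(t_1)=(\varphi(t_1)-t_1\psi_o)+t_1\psi_o$ with the first summand uniformly bounded and $\omega_{t_1}$-psh, so $\varphi(t_1)-t_1\psi_o$ is (up to the fixed bounded difference $\omega_{t_1}-\omega$) a family of $\omega'$-psh functions for some fixed Kähler $\omega'$; the $L^1$ convergence $\varphi(t_1)\to\varphi_0$ together with Hartogs' lemma (as in the proof of Proposition \ref{0318001}) then gives $\limsup_{t_1\to0}\sup_X\varphi(t_1)\leqslant\sup_X\varphi_0$. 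For the lower barrier on $\phi$, I would invoke Proposition \ref{0401005}: $\phi(t_1)\geqslant\phi_0+t_1\psi_o+n(t_1\log t_1-t_1)-Ct_1$ on $X\setminus D$ for $t_1$ small, so $-\phi(t_1)\leqslant-\phi_0-t_1\psi_o+o(1)$ pointwise, and combined with $\psi_o\leqslant C$ this controls $\sup_X(\varphi(t_1)-\phi(t_1))$ from above by $\sup_X(\varphi_0-\phi_0)+o(1)$. Passing to the limit completes the proof of \eqref{0402022}. The delicate point throughout is that $\psi_o\to-\infty$ on $D$, but every comparison is carried out on $X\setminus D$ first with the $a\log|s|_h^2$ regularization, and the $L^\infty$ bound on the "renormalized" potentials $\varphi(t)-t\psi_o$, $\phi(t)-t\psi_o$ is exactly what makes the limiting step legitimate.
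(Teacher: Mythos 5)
Your proposal is correct and follows essentially the same route as the paper: the averaged-Laplacian maximum principle with the $a\log|s|_h^2$ barrier to force the spatial maximum off $D$, Proposition \ref{0401005} to bound $\phi$ from below near $t=0$, and Hartogs' lemma applied to the uniformly bounded quasi-psh family $\varphi(t_1)-t_1\psi_o$ (which the paper views as $3\omega_o$-psh rather than $\omega'$-psh for a smooth $\omega'$) to pass to the limit. The paper merely packages these steps differently — it shifts time by $t_0$ and absorbs the Proposition \ref{0401005} bound into the initial data $u(0)=\phi(t_0)+C_{t_0}\geqslant\phi_0+t_0\psi_o$ of the shifted flow, with the perturbation coefficient scaled as $at_0e^{-2C/t_0^2}$ — whereas you first prove the comparison from time $t_1$ and then send $t_1\to0$.
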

\begin{proof}
Fix $t_0>0$, we first consider 
\begin{equation}\label{0402023}
u(t)=\phi(t+t_0)+C_{t_0},
\end{equation}
where $C_{t_0}=Ct_0-n(t_0\log t_0-t_0)$ comes from \eqref{0401006}. Hence $u(t)$ is a solution to equation
\begin{equation}\label{0402024}
\frac{\partial u(t)}{\partial t}=\frac{\partial \phi(t+t_0)}{\partial t}=\log\frac{\left(\omega+(t+t_0)\nu+\dd u(t)\right)^n}{\omega^n}+h+\log|s|_h^2
\end{equation}
with initial value
\begin{equation}\label{0402025}
u(0)=\phi(t_0)+C_{t_0}\geqslant\phi_0+t_0\psi_o.
\end{equation}
On the other hand, 
\begin{equation}\label{0403001}
v(t)=\varphi(t+t_0)
\end{equation}
satisfies equation
\begin{equation}\label{0402026}
\frac{\partial v(t)}{\partial t}=\log\frac{\left(\omega+(t+t_0)\nu+\dd v(t)\right)^n}{\omega^n}+h+\log|s|_h^2
\end{equation}
with initial value $\varphi(t_0)$. If we denote
\begin{equation}\label{0402027}
\hat{\Delta}=\int_0^1 g_{s}^{i\bar{j}}\frac{\partial^2}{\partial z^i\partial\bar{z}^j}ds,
\end{equation}
where $g_s$ is the metric corresponding to $\omega_s=\omega_{(t+t_0)}+s\dd v(t)+(1-s)\dd u(t)$. Then 
\begin{equation}\label{0402028}
\Phi(t)=v(t)-u(t)+at_0e^{-\frac{2C}{t_0^2}}\log|s|_h^2
\end{equation}
evolves along the following equation
\begin{equation}\label{0402029}
  \frac{\partial \Phi(t)}{\partial t}\leqslant\hat{\Delta}\Phi(t)-at_0e^{-\frac{2C}{t_0^2}}\hat{\Delta}\log|s|_h^2.
\end{equation}
Since $\omega_{v(t)}$ and $\omega_{u(t)}$ are bounded from below by $e^{-\frac{C}{t_0^2}}\omega_o$ and $\omega_o\geqslant \frac{1}{2}\omega$, we obtain
\begin{equation}\label{0402030}
\omega_s\geqslant e^{-\frac{C}{t_0^2}}\omega.
\end{equation}
Combining this with $-\dd\log|s|_h^2=\theta$, we conclude that there exists a uniform constant $C$ depending on $\theta$ and $\omega$ such that
\begin{equation}\label{0405007}
-\hat{\Delta}\log|s|_h^2=\int_0^1\tr_{\omega_s}\theta ds\leq Ce^{\frac{C}{t_0^2}}
\end{equation}
in $X\setminus D$. Then 
\begin{equation}\label{0402032}
  \frac{\partial \Phi(t)}{\partial t}\leqslant\hat{\Delta}\Phi(t)+aCe^{-\frac{C}{t_0^2}}.
\end{equation}

Let $\tilde{\Phi}=\Phi(t)-aCe^{-\frac{C}{t_0^2}}t-\epsilon t$, and $(t_1,x_1)$ be its maximum point on $[0, T]$. Since
\begin{equation}\label{040202888}
\begin{split}
\Phi(t)&=v(t)-u(t)+at_0e^{-\frac{2C}{t_0^2}}\log|s|_h^2\\
&=\varphi(t+t_0)-\left(t+t_0\right)\psi_o-\left(\phi(t+t_0)-\left(t+t_0\right)\psi_o\right)-C_{t_0}+at_0e^{-\frac{2C}{t_0^2}}\log|s|_h^2
\end{split}
\end{equation}
and
\begin{equation}\label{}
\|\varphi(t+t_0)-\left(t+t_0\right)\psi_o\|_{L^\infty(X)}\ \ \ \text{and}\ \ \ \|\phi(t+t_0)-\left(t+t_0\right)\psi_o\|_{L^\infty(X)}
\end{equation}
are bounded, $\Phi(t)$ and so $\tilde\Phi(t)$ converge to $-\infty$ as $x$ tends to $D$. Hence $x_1$ must be in $X\setminus D$. If $t_1>0$, by the maximum principle, at $(t_1,x_1)$, we have
\begin{equation}\label{0402033}
 0\leqslant \left(\frac{\partial }{\partial t}-\hat{\Delta}\right)\tilde{\Phi}(t)\leqslant -\epsilon,
\end{equation}
which is impossible, hence $t_1=0$. Since by Proposition \ref{0401005},
\begin{equation}\label{0402034}
\begin{split}
\tilde\Phi(0,x_1)&\leqslant v(0,x_1)-u(0,x_1)+at_0e^{-\frac{2C}{t_0^2}}\log|s|_h^2(x_1)\\
&=\varphi(t_0,x_1)-\phi(t_0,x_1)-C_{t_0}+at_0e^{-\frac{2C}{t_0^2}}\log|s|_h^2(x_1)\\
&\leqslant\varphi(t_0,x_1)-\phi_0(x_1)-t_0\psi_o(x_1)+at_0e^{-\frac{2C}{t_0^2}}\log|s|_h^2(x_1)\\
&\leqslant\varphi(t_0,x_1)-\phi_0(x_1)+t_0\log\left(|s|_h^{2ae^{-\frac{2C}{t_0^2}}}\log^2|s|_h^2\right)(x_1)\\
&\leqslant\sup\limits_{X}\left(\varphi(t_0)+t_0\log\left(|s|_h^{2ae^{-\frac{2C}{t_0^2}}}\log^2|s|_h^2\right)-\phi_0\right)
\end{split}
\end{equation}
Then for $(t,x)\in [\delta,T]\times (X\setminus D)$, we obtain
\begin{equation}\label{0402035}
\begin{split}
\varphi(t+t_0)-\phi(t+t_0)&\leqslant\sup\limits_{X}\left(\varphi(t_0)+t_0\log\left(|s|_h^{2ae^{-\frac{2C}{t_0^2}}}\log^2|s|_h^2\right)-\phi_0\right)\\
&\ \ \ +aCTe^{-\frac{C}{t_0^2}}+\epsilon T+C_{t_0}-at_0e^{-\frac{2C}{t_0^2}}\log|s|_h^2.
\end{split}
\end{equation}
By using Claim \eqref{040201899},
\begin{equation}\label{0510007}
\begin{split}
\varphi(t+t_0)-\phi(t+t_0)&=\varphi(t+t_0)-\left(t+t_0\right)\psi_o-\left(\phi(t+t_0)-\left(t+t_0\right)\psi_o\right)\\
&\xrightarrow{t_0\rightarrow 0}\varphi(t)-t\psi_o-\left(\phi(t)-t\psi_o\right)\\
&=\varphi(t)-\phi(t).
\end{split}
\end{equation}
We let $\epsilon\to0$ and then $t_0\rightarrow0$, since $\varphi(t_0)+t_0\log\left(|s|_h^{2ae^{-\frac{2C}{t_0^2}}}\log^2|s|_h^2\right)$ is a $3\omega_{o}$-psh function which converges to $\varphi_0$ in $L^1$-sense as $t_0\to0$ on $X$, by Hartogs' Lemma, there holds
\begin{equation}\label{0402036}
\varphi(t)-\phi(t)\leqslant\sup\limits_{X}\left(\varphi_0-\phi_0\right).
\end{equation}
We complete the proof.
\end{proof}
As a corollary of above uniqueness and Theorem \ref{0330018}, we obtain the following uniqueness theorem for the twisted cusp K\"ahler-Ricci flow with initial metric.
\begin{thm}\label{0318007}
Assume that $\varphi_0\in{\rm PSH}(X,\omega)\cap L^\infty(X)$. Then there exists a unique weak solution to equation \eqref{PC} with initial value $\varphi_0$, or equivalently, there exists a unique weak solution to the twisted cusp K\"ahler-Ricci flow \eqref{PK} on $(0,T^\eta_{\max})\times X$.
\end{thm}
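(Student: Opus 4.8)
The plan is to deduce Theorem~\ref{0318007} as a direct corollary of the existence result Theorem~\ref{0330018} and the comparison principle Theorem~\ref{0402021}. For existence, Theorem~\ref{0330018} already provides a weak solution $\omega(t)=\omega_t+\dd\varphi(t)$ to \eqref{PK} in the sense of Definition~\ref{0510002}, with $\varphi(t)$ the potential obtained in \eqref{223}; this $\varphi(t)$ is, by its construction as the $\mathcal{C}^\infty_{loc}$-limit of the conical potentials $\varphi_{\gamma_i}(t)$, a weak solution to \eqref{PC} with initial value $\varphi_0$. Conversely, from any weak solution $\omega(t)$ to \eqref{PK} one recovers its potential via $\omega(t)=\omega_t+\dd\varphi(t)$, and using $-{\rm Ric}(\omega(t))+\eta=\nu+\dd\big(\log\tfrac{\omega(t)^n}{\omega^n}+h+\log|s|_h^2\big)$ on $X\setminus D$ together with $\omega_t=\omega+t\nu$, one gets $\dd\big(\dot\varphi(t)-\log\tfrac{(\omega_t+\dd\varphi(t))^n}{\omega^n}-h-\log|s|_h^2\big)=0$ on $X\setminus D$; the pluriharmonic error is bounded on $X\setminus D$ for fixed $t>0$ by the estimates in Definition~\ref{0510002}, hence extends to $X$ and is a constant $c(t)$, which is forced to vanish by the normalization $\lim_{t\to0^+}\varphi(t)=\varphi_0$. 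Thus the two notions of weak solution correspond, and only uniqueness remains.

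For uniqueness I would take two weak solutions $\varphi(t),\phi(t)$ to \eqref{PC} with the same initial value $\varphi_0\in{\rm PSH}(X,\omega)\cap L^\infty(X)$ and invoke Theorem~\ref{0402021} twice. Applying it with the pair of initial data $(\varphi_0,\phi_0)=(\varphi_0,\varphi_0)$ gives $\varphi(t)-\phi(t)\leqslant\sup_X(\varphi_0-\varphi_0)=0$ in $X\setminus D$ for all $t\in[0,T^\eta_{\max})$; exchanging the roles of $\varphi$ and $\phi$ yields the reverse inequality, so $\varphi(t)\equiv\phi(t)$ on $X\setminus D$. Since $D$ is a divisor, hence Lebesgue-null, and both potentials lie in $L^1(X)$ (by the global bound $\|\varphi(t)-t\psi_o\|_{L^\infty(X)}\leqslant C$ together with $\psi_o\in L^1(X)$, cf. \eqref{0331005}), the two potentials agree almost everywhere on $X$; therefore $\omega_t+\dd\varphi(t)=\omega_t+\dd\phi(t)$ as currents, i.e. the two twisted cusp K\"ahler-Ricci flows coincide. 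This establishes the theorem.

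The substantive work is already contained in Theorem~\ref{0402021} (and, upstream, in Proposition~\ref{0401005} and the uniform estimates of Sections~\ref{Bou1}--\ref{Bou2}), so there is essentially no obstacle left. The only points needing a line of care are: verifying that each weak solution, restricted to positive times, satisfies the hypotheses of Theorem~\ref{0402021} -- which is immediate here since the initial datum $\varphi_0$ is assumed $\omega$-plurisubharmonic and bounded, exactly the class in that theorem; and the passage from the pointwise identity on $X\setminus D$ to the a.e.\ identity on $X$, which is where the $L^\infty$-control on $\varphi(t)-t\psi_o$ from Definition~\ref{0510002} enters. I expect the write-up of this corollary to be brief.
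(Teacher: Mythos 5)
Your proposal is correct and follows exactly the route the paper takes: existence is imported from Theorem~\ref{0330018}, and uniqueness follows by applying the comparison principle of Theorem~\ref{0402021} twice with identical initial data and symmetrizing. The additional remarks on the equivalence of weak solutions of \eqref{PK} and \eqref{PC} and on passing from equality on $X\setminus D$ to equality of currents on $X$ are sound and only make explicit what the paper leaves implicit.
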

\begin{rem}\label{0510010}
In \cite{LSZ}, by approximating $\log|s|_h^2$ by $\log(\ve^2+|s|_h^2)$ as $\ve$ decreases to zero, Li-Shen-Zheng constructed a sequence of solutions $\phi_\ve(t)$ converging to a maximal weak solution $\phi_{\max}(t)$ (in the sense of potential) to the twisted cusp K\"ahler-Ricci flow \eqref{PK} on $(0,T^\eta_{\max})\times X$ due to $\log|s|_h^2\leqslant\log(\ve^2+|s|_h^2)$. 

In this paper, we obtain a weak solution $\varphi(t)$ to the twisted cusp K\"ahler-Ricci flow \eqref{PK} on $(0,T^\eta_{\max})\times X$ by taking a limit on $\varphi_\gamma(t)$ as $\gamma$ tends to zero. Moreover, we can also prove that $\varphi_\gamma(t)+t\gamma\log|s|_h^2$ increases to a minimal weak solution $\phi_{\min}(t)$ to the twisted cusp K\"ahler-Ricci flow \eqref{PK} on $(0,T^\eta_{\max})\times X$. Therefore,
\begin{equation}\label{0510011}
\varphi_\gamma(t)+t\gamma\log|s|_h^2\leqslant\phi_{\min}(t)=\varphi(t)\leqslant\phi_{\max}(t)\leqslant\phi_\ve(t).
\end{equation}
In fact, by using above uniqueness theorem Theorems \ref{0402021} and \ref{0318007}, we conclude that the solutions obtained from above three different methods are the same, that is,
\begin{equation}\label{0510012}
\phi_{\min}(t)=\varphi(t)=\phi_{\max}(t).
\end{equation}
\end{rem}
Combining Theorem \ref{0506001} and the uniqueness Theorem \ref{0318007}, we give the characterization of the limit flow of the twisted conical K\"ahler-Ricci flow as its cone angle tends to zero, which is exactly Theorem \ref{0416001}.

\end{document}